\newcommand{\N}{\mathbb{N}}
\newcommand{\Z}{\mathbb{Z}}
\newcommand{\R}{\mathbb{R}}
\newcommand{\C}{\mathbb{C}} 
\newcommand{\g}{\mathfrak{g}}
\DeclareMathOperator{\supp}{supp}
\newtheorem{thm}{Theorem}[section]
\newtheorem{df}[thm]{Definition}
\newtheorem{prop}[thm]{Proposition}
\newtheorem{lem}[thm]{Lemma}
\newtheorem{cor}[thm]{Corollary}
\newtheorem{rem}[thm]{Remark}
\newtheorem{exa}[thm]{Example}
\numberwithin{equation}{section}
\title{Twisted Euler transform of differential equations with an irregular singular point}
\date{}
\author{Kazuki Hiroe\thanks{E-mail:\texttt{kazuki@ms.u-tokyo.ac.jp}}}
\begin{document}
\maketitle
\begin{center}
Graduate School of Mathematical Sciences, The University of Tokyo, 3-8-1 Komaba, Meguro-ku, Tokyo, 153-8914, Japan.
\end{center}
\begin{abstract}
In \cite{Katz}, N. Katz introduced the notion of the middle convolution on
 local systems. This can be seen as a generalization of the Euler
 transform of Fuchsian differential equations. In this paper, we
 consider  the generalization of the Euler transform, the twisted Euler
 transform, and apply this to differential equations with irregular singular
 points. In particular, for differential equations with an
 irregular singular point of irregular rank 2 at $x=\infty$, we
 describe explicitly changes of local datum caused by twisted
 Euler transforms.   Also we attach
 these differential equations to Kac-Moody Lie algebras and show that
 twisted Euler transforms correspond to the actions of Weyl groups of
 these Lie algebras. 
\end{abstract}
\section{Introduction}
For a function $f(x)$, the following integral
\[
I_{a}^{\lambda}f(x)=\frac{1}{\Gamma(\lambda)}\int_{a}^{x}(x-t)^{\lambda-1}f(t)\,dt
\]
is called the Riemann-Liouville integral for $a,\lambda\in\C$. If we take a function $f(x)=(x-a)^{\alpha}\phi(x)$ where $\alpha\in\C\backslash\Z_{<0}$ and $\phi(x)$ is a holomorphic function on a neighborhood of $x=a$ and $\phi(a)\neq 0$, then it is known that
\[
I^{-n}_{a}f(x)=\frac{d^{n}}{dx^{n}}f(x).
\]
Hence one can consider the Riemann-Liouville integral to be a fractional or complex powers of derivation $\partial=\frac{d}{dx}$. This may allow us to write $\partial^{\lambda}f(x)=I_{a}^{-\lambda}f(x)$ formally. 

Moreover one can show a generalization of the Leibniz rule,
\[
\partial^{\lambda}p(x)\psi(x)=\sum_{i=0}^{n}\begin{pmatrix}\lambda\\i\end{pmatrix}p^{(i)}(x)\partial^{\lambda-i}\phi(x),
\] 
if $p(x)$ is a polynomial of degree equal to or less than $n$. 

Now let us consider a differential operator with polynomial coefficients,
\[
P(x,\partial)=\sum_{i=0}^{n}a_{i}(x)\partial^{i}.
\]
The above Leibniz rule assures that $$\partial^{\lambda+m}P(x,\partial)\partial^{-\lambda}$$ gives the new differential operator with polynomial coefficients if we choose a suitable $m\in\Z$. 
Moreover if $f(x)$ satisfies $P(x,\partial)f(x)=0$ and $I^{-\lambda}_{a}f(x)$ is well-defined for some $a,\lambda\in \C$, then we can see that
\begin{align*}
\partial^{\lambda+m}P(x,\partial)\partial^{-\lambda}I_{a}^{-\lambda}f(x)&=\partial^{\lambda+m}P(x,\partial)\partial^{-\lambda+\lambda}f(x)\\
&=\partial^{-\lambda+m}P(x,\partial)f(x)\\
&=0.
\end{align*}
Hence $\partial^{\lambda}$ turns a differential equation with polynomial coefficients $P(x,\partial)u=0$ into a new differential equation with polynomial coefficients $Q(x,\partial)u=0$, and moreover a solution of $Q(x,\partial)u=0$ is given by a solution of $P(x,\partial)u=0$ if the Riemann-Liouville integral is well-defined. This correspondence of differential equations is called the Euler transform.

For example, let us take the differential equation of the Gauss hypergeometric function,
\begin{equation}\label{Gauss hyp}
x(1-x)\partial^{2}u+(\gamma-(\alpha+\beta+1)x)\partial u-\alpha\beta u=0.
\end{equation}
Then we can see that
\begin{align*}
&\partial^{-\beta}(x(1-x)\partial^{2}+(\gamma-(\alpha+\beta+1)x)\partial -\alpha\beta )\partial^{\beta-1}\\
&=x(1-x)\partial+((\gamma-\beta)-(\alpha-\beta+1)x).
\end{align*}
And it is not hard to see that the general solution of $x(1-x)\partial+((\gamma-\beta)-(\alpha-\beta+1)x)u=0$ is given by constant multiples of $x^{\beta-\gamma}(1-x)^{\alpha-\gamma+1}$. Hence solutions of $(\ref{Gauss hyp})$ are 
\[
I_{c}^{\beta-1}x^{\beta-\gamma}(1-x)^{\alpha-\gamma}=\frac{1}{\Gamma(-\beta)}\int_{c}^{x}t^{\beta-\gamma}(1-t)^{\alpha-\gamma}(x-t)^{-\beta}\,dt
\]
for $c=0,1,\infty$. 

This  argument tells us that by the Euler transform $\partial^{\beta-1}$, we can reduce $(\ref{Gauss hyp})$ to an ``easier'' one,
\[
x(1-x)\partial+((\gamma-\beta)-(\alpha-\beta+1)x).
\]
And solutions of $(\ref{Gauss hyp})$ can be obtained from this ``easy'' equation.

This can be applicable to differential equations with an irregular singular point. For example, let us consider the differential equations,
\begin{align}
x\partial^{2}u+(\gamma-x)\partial u-\alpha u=0,\label{Kummer}\\
\partial^{2}u-x\partial u+\alpha u=0.\label{Hermite}
\end{align}
The first one is  the differential equation of the Kummer confluence
hypergeometric function. The second one is the  one of the Hermite-Weber function.
Then we have
\begin{align*}
&\partial^{-\alpha}(x\partial^{2}+(\gamma-x)\partial-\alpha)\partial^{\alpha-1}\\
&=x\partial+((\gamma-\alpha)-x),
\end{align*}
and
\begin{align*}
&\partial^{\alpha}(\partial^{2}-x\partial+\alpha)\partial^{-\alpha-1}\\
&=\partial-x.
\end{align*}
Solutions of these differential equations are $x^{\alpha-\gamma}e^{x}$ and $e^{\frac{x^{2}}{2}}$ up to constant multiples. Hence solutions of $(\ref{Kummer})$ and $(\ref{Hermite})$ are given by
\begin{align*}
\frac{1}{\Gamma(-\alpha)}\int_{c}^{x}(x-t)^{-\alpha}t^{\alpha-\gamma}e^{t}\,dt
\end{align*}
for $c=0,\infty$, and
\begin{align*}
\frac{1}{\Gamma(\alpha)}\int_{-\infty}^{x}(x-t)^{\alpha}e^{\frac{t^{2}}{2}}\,dt&=\frac{1}{\Gamma(\alpha)}\int_{0}^{\infty}t^{\alpha}e^{\frac{(x-t)^{2}}{2}}\,dt\\
&=\frac{e^{\frac{x^{2}}{2}}}{\Gamma(\alpha)}\int_{0}^{\infty}e^{\frac{t^{2}}{2}-tx}t^{\alpha}\,dt.
\end{align*}

These facts tell us  that the Euler transform  is a good tool to study a solution of a differential equation from a solution of an easier differential equation. Then a question arises.
\vspace{1.5mm}

Can one always reduce differential equations to ``easy'' ones by Euler transforms? If not, 
find a good class of differential equations which can be reduced to easy one.
\vspace{1.5mm}

An answer of this question is known for Fuchsian differential equations. Let us consider a system of Fuchsian differential equations of the form,
\begin{equation}\label{schleginger normal form}
\frac{d}{dx}Y(x)=\sum_{i=1}^{r}\frac{A_{i}}{(x-c_{i})}Y(x),
\end{equation}
where $A_{i}$ are $n\times n$ complex matrices, $Y(x)$ is a $\C^{n}$-valued function and $c_{i}\in \C$. In this case, it is known that the Euler transform corresponds to the additive middle convolution (see \cite{D-W1} and \cite{D-W2}). The answer of our question is given by the following theorem.
\begin{thm}[Katz \cite{Katz}, Dettweiler-Reiter \cite{D-W1} \cite{D-W2}]\label{Theorem of Katz}
The tuple of $n\times n$ of matrices, $A_{1},\ldots,A_{r}$ is irreducible and linearly rigid, the differential equation $(\ref{schleginger normal form})$ can be reduced to
\[
\frac{d}{dx}y(x)=\sum_{i=1}^{r}\frac{a_{i}}{x-c_{i}}y(x)
\]
where $a_{i}\in\C$ by finite iterations of middle convolutions and additions.  
\end{thm}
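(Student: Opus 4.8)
The plan is to follow the algorithm of Katz in the matrix reformulation of Dettweiler--Reiter, reducing the dimension $n$ by one middle convolution (preceded by a suitable addition) and then inducting on $n$. For $\mathbf{A}=(A_1,\dots,A_r)$ set $A_\infty=-\sum_{i=1}^r A_i$; an \emph{addition} replaces $A_i$ by $A_i+\mu_i I_n$ (the matrix avatar of the shifts $x^{\mu}$ seen in the scalar examples above), and for $\lambda\in\C^\times$ the \emph{middle convolution} $\mathrm{MC}_\lambda(\mathbf{A})$ is built from the $rn\times rn$ block matrices $B_k$ whose $k$-th block-row is $(A_1,\dots,A_k+\lambda,\dots,A_r)$ and which vanish elsewhere, by letting the $B_k$ act on $\C^{rn}/(\mathcal{K}+\mathcal{L})$, where $\mathcal{K}=\bigoplus_k \Ker A_k$ and $\mathcal{L}=\{(v,\dots,v):v\in\Ker(A_\infty-\lambda)\}$. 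I would first record the three structural properties, all established in \cite{D-W1,D-W2}: (i) $\mathrm{MC}_\lambda\circ\mathrm{MC}_\mu\cong\mathrm{MC}_{\lambda\mu}$ and $\mathrm{MC}_1=\mathrm{id}$, so $\mathrm{MC}_\lambda$ is invertible with inverse $\mathrm{MC}_{1/\lambda}$ (this is exactly what guarantees that a solution of the reduced equation reproduces one of the original, as in the Gauss and Kummer examples); (ii) $\mathrm{MC}_\lambda$ preserves irreducibility; and (iii) $\mathrm{MC}_\lambda$, together with additions, preserves the index of rigidity
\[
\mathrm{rig}(\mathbf{A})=(1-r)n^2+\sum_{i=1}^r \dim Z(A_i)+\dim Z(A_\infty),
\]
where $Z(\cdot)$ is the centralizer in $M_n(\C)$, and that for an irreducible tuple rigidity is equivalent to $\mathrm{rig}(\mathbf{A})=2$.

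Granting these, the heart of the matter is a single dimension-drop lemma: if $\mathbf{A}$ is irreducible and rigid with $n\ge 2$, then after a normalizing addition there is a $\lambda\in\C^\times$ with $\dim\mathrm{MC}_\lambda(\mathbf{A})<n$. I would start from the dimension formula
\[
\dim\mathrm{MC}_\lambda(\mathbf{A})=\sum_{i=1}^r \mathrm{rank}(A_i)-\dim\Ker(A_\infty-\lambda)=rn-\sum_{i=1}^r d_i-m,
\]
with $d_i=\dim\Ker A_i$ and $m=\dim\Ker(A_\infty-\lambda)$, which follows from $\dim(\mathcal{K}+\mathcal{L})=\sum_i d_i+m$ once one checks $\mathcal{K}\cap\mathcal{L}=0$ for irreducible $\mathbf{A}$. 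By additions I normalize each $A_i$ so that $0$ is its eigenvalue of highest geometric multiplicity, making $d_i$ maximal, and I take $\lambda$ to be the eigenvalue of $A_\infty$ of highest multiplicity, making $m$ maximal. Assuming semisimple residues, the elementary inequality $\dim Z(A)=\sum_j m_{A,j}^2\le(\max_j m_{A,j})\,n$ for the eigenvalue multiplicities $m_{A,j}$ gives $\dim Z(A_i)\le n\,d_i$ and $\dim Z(A_\infty)\le n\,m$; feeding these into the rigidity equation rewritten as $\sum_i\dim Z(A_i)+\dim Z(A_\infty)=(r-1)n^2+2$ yields
\[
n\Bigl(\sum_{i=1}^r d_i+m\Bigr)\ge (r-1)n^2+2,
\]
so that $\sum_i d_i+m\ge (r-1)n+1$ and therefore $\dim\mathrm{MC}_\lambda(\mathbf{A})=rn-\sum_i d_i-m\le n-1<n$.

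The induction then closes at once: by (ii) the reduced tuple is again irreducible, by (iii) it is again rigid, and its dimension is strictly smaller, so finitely many additions and middle convolutions bring us to $n=1$, which is precisely a tuple of scalars $a_i\in\C$, i.e. the equation $\frac{d}{dx}y=\sum_i\frac{a_i}{x-c_i}y$. Reversing the invertible operations via (i) recovers the solutions of $(\ref{schleginger normal form})$ from those of the rank-one equation.

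I expect the main obstacle to be this dimension-drop lemma, and within it the bookkeeping that makes the two normalizations simultaneously effective: one must ensure that the maximal-multiplicity eigenvalue $\lambda$ of $A_\infty$ can be taken nonzero (so $\mathrm{MC}_\lambda$ is defined and nontrivial), and the semisimplicity reduction must be justified, since for non-semisimple residues $\dim\Ker A_i$ (a geometric multiplicity, which is what the dimension formula sees) can fall short of the algebraic multiplicity used in the centralizer bound; the general Jordan case requires replacing $\dim Z(A)\le(\max_j m_{A,j})\,n$ by the sharper centralizer estimate and re-running the inequality. A secondary point needing care is the verification of $\mathcal{K}\cap\mathcal{L}=0$ and of the invariance of $\mathcal{K}+\mathcal{L}$ under every $B_k$, which is where irreducibility of $\mathbf{A}$ (and not merely rigidity) enters.
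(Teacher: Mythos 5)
The first thing to note is that the paper itself contains \emph{no} proof of Theorem \ref{Theorem of Katz}: it is quoted as background, attributed to \cite{Katz}, \cite{D-W1}, \cite{D-W2}, and the paper's own contribution (Theorem \ref{rigid reduction}) is a different statement proved by a different, Kac--Moody--theoretic method. So the only sensible benchmark for your proposal is the Dettweiler--Reiter argument, and your outline is indeed that argument: the dimension formula for the middle convolution, normalization by additions, and the rigidity/centralizer count producing a strict dimension drop, followed by induction. Two corrections of detail. First, a mis-citation: you state the composition law multiplicatively ($\mathrm{MC}_\lambda\circ\mathrm{MC}_\mu\cong\mathrm{MC}_{\lambda\mu}$, $\mathrm{MC}_1=\id$, inverse $\mathrm{MC}_{1/\lambda}$), but the convolution you actually construct --- $A_k+\lambda$ in the $k$-th diagonal block, $\mathcal{L}$ built from $\Ker(A_\infty-\lambda)$ --- is the \emph{additive} one of \cite{D-W2}, whose law is $\mathrm{mc}_\lambda\circ\mathrm{mc}_\mu\cong\mathrm{mc}_{\lambda+\mu}$ with $\mathrm{mc}_0\cong\id$ and inverse $\mathrm{mc}_{-\lambda}$; the multiplicative rule belongs to Katz's setting of tuples in $GL_n$. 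This is harmless for the induction, which only uses invertibility. Second, the semisimplicity worry is unnecessary: the bound $\dim Z(A)\le n\cdot\max_{\mu}\dim\Ker(A-\mu)$ holds for arbitrary Jordan type, since on a single generalized eigenspace with block sizes $\lambda_1\ge\cdots\ge\lambda_k$ one has $\dim Z=\sum_{i,j}\min(\lambda_i,\lambda_j)\le k\sum_i\lambda_i$, and summing over eigenvalues gives the stated inequality with $k$ replaced by the maximal geometric multiplicity.

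The one genuine gap --- which you flagged but did not close --- is the nonvanishing of the convolution parameter: after shifting each $A_i$ so that its maximal-geometric-multiplicity eigenvalue becomes $0$, why is the maximal-multiplicity eigenvalue $\lambda$ of the new $A_\infty$ automatically nonzero? It closes as follows. Suppose $\mu_1,\dots,\mu_r,\mu_\infty$ are any scalars with $\sum_i\mu_i+\mu_\infty=0$ and $\mathbf{A}$ is irreducible with $n\ge2$. The map $\Phi\colon\bigoplus_{i=1}^r\C^n\to\C^n$, $(v_1,\dots,v_r)\mapsto\sum_i(A_i-\mu_i)v_i$, is surjective, because its image is a nonzero subspace invariant under every $A_k$ (write $A_kw=(A_k-\mu_k)w+\mu_kw$). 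Its kernel, of dimension $(r-1)n$, contains both $\bigoplus_i\Ker(A_i-\mu_i)$ and the diagonal copy of $\Ker(A_\infty-\mu_\infty)$ (this is where the relation $\sum_i\mu_i+\mu_\infty=0$ enters), and these two subspaces intersect trivially by irreducibility. Hence $\sum_i\dim\Ker(A_i-\mu_i)+\dim\Ker(A_\infty-\mu_\infty)\le(r-1)n$ whenever $\sum_i\mu_i+\mu_\infty=0$. Since your rigidity count gives $\sum_i d_i+m\ge(r-1)n+1$ for the maximal-multiplicity choices, those choices cannot satisfy $\sum_i\mu_i+\mu_\infty=0$, which is exactly the statement $\lambda\neq0$ after normalization. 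With this lemma inserted your proof is complete and follows the cited source; it is, for what it is worth, genuinely different from the method this paper develops for its own irregular generalization, where reduction is achieved not by convolution bookkeeping but by translating twisted Euler transforms into simple reflections of a Kac--Moody Weyl group and invoking positivity of the rigidity index.
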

Definitions of terminologies in this theorem can be found in the original papers \cite{D-W1} and \cite{D-W2}.

Our purpose of this paper is to extend this theorem to non-Fuchian differential equations. 

In this paper, we consider differential equations with polynomial
coefficients which have an irregular singular point of rank at most 2 at $x=\infty$ and some regular singular points in $\C$. And we give a generalization of Theorem \ref{Theorem of Katz}.

The organization of this paper is the following. In Section 2, we give a review of some operations on the Weyl algebra. These operations are provided by T. Oshima in \cite{O} to define the Euler transform in the strict way as an operation on the Weyl algebra. We use these operations to define a generalization of the Euler transform, the twisted Euler transform.

Because we treat differential equations with an irregular singular point, solutions of them have asymptotic expansions as formal power series. Hence in Section 3, we study the formal solutions of differential equations. We introduce the notion of semi-simple characteristic exponents here.

Section 4 is  one of the main parts of this paper. We focus on
differential equations which have an irregular singular point of rank 2 at
infinity and regular singular points in $\C$. We also assume that at the
irregular singular point, every formal solutions are ``normal type'',
i.e., formal solutions can be written by
$e^{p(x)}x^{-\mu}\sum_{s=0}^{\infty}c_{s}x^{-s}$ for polynomials $p(x)$. Equivalently to say, we assume that differential equations are unramified according to the terminology of the differential Galois theory. Then in Section 4, we investigate the way how the twisted Euler transform changes differential equations, in other words, how characteristic exponents at singular points are changed by the twisted Euler transform.

In Section 5, we see the relation between differential equations and Kac-Moody Lie algebras.
 In \cite{C-B1}, W. Crawley-Boevey find a correspondence of the systems
 of Fuchsian differential equations as $(\ref{schleginger normal form})$
 and representations of quivers and moreover Kac-Moody Lie algebras
 associated with these representations. And he solved the  existence problem of systems of differential equations, so-called Deligne-Simpson problem by using the theory of representations of quivers. An analogous work is done by P. Boalch in \cite{B}. Boalch generalizes the result of Crawley-Boevey for the cases which allow an irregular singular point of rank 2. And he also solved Deligne-Simpson problem with an irregular singular point of rank 2. 
 
 As an analogue of their works, we attach a Kac-Moody Lie algebra to the concerning differential equation. Moreover we show that twisted Euler transforms correspond to simple reflections on the Cartan subalgebra of this Lie algebra. By using this correspondence, we prove the following main theorem.
\begin{thm}
Let  us take $P(x,\partial)\in W[x,\xi]$ as in Definition \ref{local
 datum}. If $\mathrm{idx}\,P>0$, then $P(x,\partial)$ can be reduced to 
\[
(\partial-\alpha x-\beta)^{n}
\]
for some $\alpha,\beta\in \C$ and $n\in \Z_{>0}$ by finite iterations of
 twisted Euler transforms and additions at regular singular points. 
\end{thm}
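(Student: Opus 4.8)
The plan is to translate the reduction problem entirely into the language of the Kac-Moody Lie algebra attached to $P$ in Section 5, and then to invoke the structure theory of real roots. First I would encode the local datum of $P$ of Definition \ref{local datum} --- the rank $n$ together with the semi-simple characteristic exponents at each regular singular point in $\C$ and at the irregular point of rank $2$ at $x=\infty$ --- as a single element $v=\sum_{i}n_{i}\gamma_{i}$ of the root lattice, written in terms of the simple roots $\gamma_{i}$ of this algebra. The quantity $\mathrm{idx}\,P$ should be identified with the value $(v,v)$ of the invariant symmetric bilinear form attached to the generalized Cartan matrix, so that the hypothesis $\mathrm{idx}\,P>0$ says precisely that $v$ lies in the positive-norm regime, i.e.\ that $v$ is a real root.

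Second, I would use the correspondence established in Section 5 between operations on $P$ and the Weyl group $W$: each twisted Euler transform realizes the simple reflection attached to the distinguished node, and each addition at a regular singular point realizes one of the remaining simple reflections. Since $W$ acts by isometries of the invariant form, $\mathrm{idx}\,P=(v,v)$ is preserved under every such operation; only the position of $v$ in its $W$-orbit, and in particular its height $\sum_{i}n_{i}$, can change. The explicit change-of-exponents formulas of Section 4 are what make these operations well-defined on differential operators and let me read off how the coordinates $n_{i}$ of $v$ transform under each reflection.

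Third comes the descent. Because $(v,v)>0$ the vector $v$ cannot be imaginary, so whenever $v$ is not already a simple root there is a simple root $\gamma_{j}$ with $\langle v,\gamma_{j}^{\vee}\rangle>0$, and applying the corresponding reflection carries $v$ to another positive root of strictly smaller height. As the height is a positive integer, after finitely many twisted Euler transforms and additions the datum is carried to a simple root. Translating this terminal simple root back through the dictionary of Section 5, the corresponding operator has order $n$ and a single irregular singular point of rank $2$ at infinity with no finite singularities; this is exactly an operator of the form $(\partial-\alpha x-\beta)^{n}$, and matching the surviving exponent data fixes the constants $\alpha,\beta$.

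The hard part will be the interface between the analytic and the Lie-theoretic sides at the irregular point. In the Fuchsian situation of Theorem \ref{Theorem of Katz} the associated quiver is star-shaped and the reflections act transparently on the residue eigenvalues; here the root lattice must be enlarged, following Boalch \cite{B}, so as to carry the irregular data (the exponential factor $e^{p(x)}$ with $\deg p=2$). I expect the genuine obstacle to be verifying that the twisted Euler transform acts as an \emph{honest} simple reflection on this enlarged lattice --- in particular that its effect on the irregular exponents matches the Cartan-matrix formula --- and, correlatively, that the height-minimal element of the orbit is forced to be $(\partial-\alpha x-\beta)^{n}$ rather than some other minimal configuration. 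Once the formulas of Section 4 are shown to be compatible with the reflection relations, termination and the identification of the target follow from the standard theory of real roots in Kac-Moody algebras.
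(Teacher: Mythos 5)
Your overall strategy --- encode the local datum as $\alpha_{P}\in Q^{+}$, identify $\mathrm{idx}\,P=(\alpha_{P},\alpha_{P})$, realize twisted Euler transforms and additions as simple reflections, and descend by height --- is exactly the paper's strategy (it is what Section 5 is built for), but there are two genuine gaps at the points you yourself flag as ``the hard part,'' and your proposed resolution of them is not correct.

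First, $\mathrm{idx}\,P>0$ does \emph{not} say that $\alpha_{P}$ is a real root: $\alpha_{P}$ is merely an element of the lattice $Q^{+}$, not a priori a root, and for a non-root positive lattice element of positive norm the height-descent argument fails outright (a reflection can throw it out of $Q^{+}$ without it ever becoming simple; e.g.\ for two orthogonal simple roots $\gamma_{1},\gamma_{2}$ the element $\gamma_{1}+\gamma_{2}$ has norm $4$, yet $r_{\gamma_{1}}(\gamma_{1}+\gamma_{2})=-\gamma_{1}+\gamma_{2}\notin Q^{+}$). What replaces this in the paper is Corollary \ref{imaginary}: as long as the current element is \emph{not} in the distinguished set $V$, the reflection is realized by an honest operator transformation (Theorem \ref{reflection middle convolution} together with the well-definedness condition of Remark \ref{well-defined middle convolution}), so the reflected element is again the datum of an actual operator and hence lies in $Q^{+}$; a minimal-height element $\beta$ of the orbit is then either a simple root --- which lies in $V$, a contradiction --- or satisfies $(\beta,v^{ij}_{k})\le 0$ for all simple roots with connected support, forcing $(\alpha_{P},\alpha_{P})\le 0$. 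It is this dichotomy, not general real-root theory, that converts the hypothesis $\mathrm{idx}\,P>0$ into $\alpha_{P}\in W(V)$.

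Second, the terminal object of the descent is not a simple root. The Euler-transform-equals-reflection dictionary breaks down precisely on $V$, whose elements $n v^{i'j'}_{1}+\sum_{j}\sum_{k}m^{j}_{k}v^{0j}_{k}$ are in general not simple roots: they correspond to operators with nontrivial regular singular data subject to $n=\sum_{i,j} m^{i}_{j}$ and a single exponential factor at infinity. Identifying such a terminal operator with $(\partial-\alpha x-\beta)^{n}$ is not Weyl-group combinatorics: the paper applies one further twisted Euler transform to normalize the exponents at infinity to $[(0);(n)]$ (Corollary \ref{reducible scheme}) and then invokes Lemma \ref{reducibility}, an analytic computation with the characteristic equation showing $P(x,\partial+\alpha x+\beta)\sim\partial^{n}$. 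Without this lemma, or an equivalent, your argument terminates at a configuration you cannot identify; the ``other minimal configurations'' you worry about are exactly what this analytic step rules out, and nothing in the standard theory of real roots does that work for you.
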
 Notations used in this theorem are explained in the subsequent sections. 
 
In Section 6, we consider the confluence of Fuchsian differential equations. It is known that differential equations of Kummer confluent hypergeometric function and Hermite weber functions are obtained from the differential equation of the Gauss hypergeometric function by the limit transitions.
In Section 6,  as a generalization of this fact, we show the following.
\begin{thm}
Take  $P(x,\partial)\in W[x,\xi]$ as in Definition \ref{local datum}.
 If $\mathrm{idx}\,P>0$, then $P(x,\partial)$ can be obtained by the limit transition of a Fuchsian $Q(x,\partial)\in W[x,\xi]$ of $\mathrm{idx}\,Q=\mathrm{idx}\, P.$
\end{thm}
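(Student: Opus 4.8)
The plan is to reduce $P$ to the canonical form supplied by the reduction theorem of Section~5, to realize that canonical form directly as a confluence of an explicit Fuchsian operator, and then to transport the confluence backwards along the reduction by showing that the limit transition intertwines twisted Euler transforms with their Fuchsian counterparts. First, since $\mathrm{idx}\,P>0$, the reduction theorem produces a finite sequence of twisted Euler transforms and additions at regular singular points, say $T_{1},\ldots,T_{k}$, carrying $P$ to $(\partial-\alpha x-\beta)^{n}$ for some $\alpha,\beta\in\C$ and $n\in\Z_{>0}$. Each of these operations is invertible on $W[x,\xi]$---it realizes a simple reflection in the Weyl group of the attached Kac--Moody Lie algebra, and its inverse is an operation of the same kind---and each preserves $\mathrm{idx}$, since the index is an invariant of the defining bilinear form and is therefore fixed by the Weyl group action. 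Reading the sequence backwards gives $P=T_{1}^{-1}\cdots T_{k}^{-1}\bigl((\partial-\alpha x-\beta)^{n}\bigr)$, with every intermediate operator of index $\mathrm{idx}\,P$.

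The base case is the canonical operator itself. Here I would exhibit an explicit Fuchsian $Q_{\mathrm{can}}\in W[x,\xi]$ with $\mathrm{idx}\,Q_{\mathrm{can}}=\mathrm{idx}\,P$ whose limit transition produces $(\partial-\alpha x-\beta)^{n}$, generalizing the Hermite--Weber computation of the introduction: for $n=1$ the solution $e^{\alpha x^{2}/2+\beta x}$ is obtained by letting regular singular points of a hypergeometric-type equation collide at $x=\infty$ under a rescaling coordinated with the exponents, the rank-$2$ exponential factor emerging precisely in the limit; the general $n$ is handled by the corresponding iterated construction. The essential point is that the combinatorial root datum attached to the equation is unchanged by this collision, so that the index is conserved across the limit and the matching $\mathrm{idx}\,Q_{\mathrm{can}}=\mathrm{idx}\,P$ is automatic.

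The crux is to show that the limit transition is compatible with the transforms, namely that the confluence source of $T_{i}^{-1}$ applied to an irregular operator is obtained by applying the corresponding Fuchsian middle convolution or addition $\widetilde{T_{i}}^{-1}$ to the confluence source. Concretely, given a Fuchsian family degenerating to an irregular operator as the confluence parameter tends to $0$, one performs a middle convolution on this family with a parameter depending on the confluence parameter, and checks that the resulting family converges to the twisted Euler transform of the limit. Granting this intertwining, the operators $\widetilde{T_{k}}^{-1},\ldots,\widetilde{T_{1}}^{-1}$ applied to $Q_{\mathrm{can}}$ yield a Fuchsian $Q$ whose confluence limit is $P$ and, by the invariance of the root datum, with $\mathrm{idx}\,Q=\mathrm{idx}\,P$.

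I expect this intertwining step to be the main obstacle. It requires tracking how the positions of the colliding singular points and their characteristic exponents depend on the confluence parameter, and verifying that the Riemann--Liouville kernel $(x-t)^{\lambda-1}$ governing the Fuchsian middle convolution degenerates uniformly to the kernel governing the twisted Euler transform. The bookkeeping must be arranged so that no exponent diverges prematurely and so that the index is manifestly conserved throughout; this parameter analysis, rather than any single algebraic identity, is where the bulk of the technical work will lie.
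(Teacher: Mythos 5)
Your skeleton coincides with the paper's: reduce $P$ to $(\partial-\alpha x-\beta)^{n}$ by Theorem \ref{rigid reduction}, realize that canonical operator as a confluence limit of something Fuchsian, transport the limit back through the inverted word of transforms, and get $\mathrm{idx}\,Q=\mathrm{idx}\,P$ from Weyl-group invariance of the bilinear form. But the two load-bearing steps of your plan are left as announcements, and the one you yourself single out as ``the main obstacle''---the intertwining of the limit transition with the transforms, which you propose to attack by tracking how the Riemann--Liouville kernel $(x-t)^{\lambda-1}$ degenerates and proving uniform convergence---is a genuine gap: you never carry it out, and attacking it analytically is both unnecessary and misdirected, because in this paper the twisted Euler transform is defined purely algebraically on $W[x,\xi]$ (through $\mathcal{L}$, $\mathrm{R}$ and $\mathrm{Ad}$), not as an integral operator, so there is no kernel whose degeneration needs to be controlled.

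The idea you are missing is the versal addition. The paper first factors every twisted Euler transform as an $\mathrm{Ade}$-conjugate of ordinary, Fuchsian-preserving operations (Remark \ref{Euler and twisted Euler}): $E(\alpha,f(\xi))=\mathrm{Ade}(\alpha x)\circ E(0,f(\xi))\circ\mathrm{Ade}(-\alpha x)$, and $E(p(x);\overline{\alpha},F(\xi))=\mathrm{Ade}(p(x))\circ\prod_{i}E(\alpha_{i},f_{i}(\xi))\circ\mathrm{Ade}(-p(x))$ by definition. Hence the only non-Fuchsian ingredients in the word carrying $(\partial-\alpha x-\beta)^{n}$ back to $P$ are the exponential twistings $\mathrm{Ade}(\lambda_{1}x)$ and $\mathrm{Ade}(\lambda_{1}x^{2}+\lambda_{2}x)$. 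Proposition \ref{limit transition} replaces exactly these: they are the $a_{i}\to 0$ limits of the versal additions $\mathrm{AdV}(a_{1};\cdot)$ and $\mathrm{AdV}(a_{1},a_{2};\cdot,\cdot)$, which are by construction compositions of ordinary additions at the points $1/a_{k}$. Substituting a versal addition for each $\mathrm{Ade}$ factor, and noting that the canonical operator itself is $\lim_{a_{1},a_{2}\to 0}\mathrm{AdV}(a_{1},a_{2};\cdot,\cdot)\,\partial^{n}$ because $\mathrm{AdV}$ is an algebra automorphism (so no hypergeometric collision construction is needed for your ``base case'' either), one obtains for fixed nonzero $a_{1},a_{2}$ a Fuchsian $Q$ built from $\partial^{n}$ by ordinary additions and Euler transforms, with coefficients rational in $a_{1},a_{2}$, whose coefficientwise limit is $P$; the index statement is then the Weyl-group invariance you already invoked. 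With this device the intertwining you worry about becomes a formal substitution inside a composition of algebraic operations rather than a parameter analysis; without it, your proposal does not yet contain a proof of the crux.
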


Meanwhile, in Appendix, we consider the differential equation which has regular singular point at $x=\infty$ and arbitrary singularities at any other points. And we give a necessary and sufficient condition to reduce the order of this differential equation by Euler transform $E(0,\mu)$. 
\begin{thm}
Let us take $P(x,\partial)\in W[x]$ which has regular singular point at $x=\infty$ and semi-simple exponents
\[
\{[\mu_{1}]_{n_{1}},\ldots,[\mu_{l}]_{n_{l}}\},
\] 
where $\sum_{i=1}^{l}n_{i}=n=\mathrm{ord}\,P$, $\mu_{i}\notin\Z$ and $\mu_{i}-\mu_{j}\notin\Z$ if $i\neq j$. Then we have
\[
\mathrm{ord}\,E(0,\mu_{i}-1)P(x,\partial)<\mathrm{ord}\,P
\]
if and only if 
\[
\deg P-\mathrm{ord}\, P<n_{i}.
\]

\end{thm}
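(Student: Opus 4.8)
The plan is to reformulate the equivalence as a single exact order formula and then verify it by a leading-term computation. Concretely, I claim
\[
\mathrm{ord}\,E(0,\mu_i-1)P=\deg P-n_i ,
\]
from which the theorem follows immediately: since $\mathrm{ord}\,P=n$, the inequality $\mathrm{ord}\,E(0,\mu_i-1)P<\mathrm{ord}\,P$ is equivalent to $\deg P-n_i<n$, i.e. to $\deg P-\mathrm{ord}\,P<n_i$. To set this up I would first pass to the normal form $P=\sum_k x^k q_k(\vartheta)$ with $\vartheta=x\partial$. The hypothesis that $x=\infty$ is a regular singular point of an operator of order $n$ forces the indicial polynomial at $\infty$ to be the top $x$-degree part $q_{k_+}(\vartheta)$ and to have $\vartheta$-degree exactly $n$; by the semisimplicity and the assumptions $\mu_i\notin\Z$, $\mu_i-\mu_j\notin\Z$ it factors (up to reindexing by the sign convention on exponents) into the $l$ blocks of multiplicities $n_1,\dots,n_l$ with a nonzero leading coefficient. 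Converting the corner monomial $x^{k_+}\vartheta^{n}$ back to $(x,\partial)$-form shows that the coefficient of $\partial^{n}$ has degree $k_++n$ with no cancellation, while no other monomial can exceed this; hence $\deg P-\mathrm{ord}\,P=k_+$, the width of the $x$-support, which is exactly the invariant measuring the contribution of the finite (possibly irregular) singularities.

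Next I would compute the effect of $E(0,\mu_i-1)$ on this normal form at the level of leading symbols, using that the operation is built from $\Ad(\partial^{-\nu})$ (which sends $\vartheta\mapsto\vartheta-\nu$ and $x\mapsto x-\nu\partial^{-1}$) together with a left factor $\partial^{-1}$ and the normalization that returns an element of $W[x]$, in accordance with the computation $\partial^{-\beta}P\partial^{\beta-1}$ of the introduction. The choice $\nu=\mu_i-1$ is made precisely so that the block attached to $\mu_i$ in the indicial polynomial at $\infty$ is shifted to an integral exponent; the corresponding factor then becomes a power of $\partial x=\vartheta+1$, and left-multiplication by $\partial^{-1}$ lets one extract powers of $\partial$, lowering the order. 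The genericity hypotheses $\mu_i\notin\Z$ and $\mu_i-\mu_j\notin\Z$ guarantee that no other block is simultaneously resonant and that no logarithmic degeneration occurs, so that this extraction is clean and isolated to the $\mu_i$-block. Tracking the $x$-degree and $\partial$-degree of the reduced representative through the factor $(x-\nu\partial^{-1})^{k_+}$ should then give that the new top $\partial$-degree equals $(k_++n)-n_i=\deg P-n_i$ with nonvanishing leading coefficient, which is the asserted formula.

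The main obstacle is this last sharp count: proving that the reduction saturates at precisely $n_i$, so that the order drops when $k_+<n_i$ but does \emph{not} drop once $k_+\ge n_i$. This is where the interplay between the targeted multiplicity and the $x$-width $k_+$ is decisive, and where the non-commutativity of $\Ad(\partial^{-\nu})$ and the clearing of negative powers of $\partial$ make naive degree bookkeeping insufficient; I expect to need the explicit transformation rules for the leading coefficient (the changes of local datum established in Section~4), together with the conditions $\mu_i\notin\Z$ and $\mu_i-\mu_j\notin\Z$, to rule out both accidental extra cancellations (which would make the order drop too far) and spurious surviving terms (which would prevent the drop) at the boundary case $k_+=n_i$. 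As a consistency check and a possible alternative route, I would confirm the formula via the Laplace transform, under which $\deg$ and $\mathrm{ord}$ are interchanged and $E(0,\mu_i-1)$ becomes an addition at the origin; the computation there reduces to the effect of that addition on the degree, which can be read off from the exponents at infinity in the transformed picture.
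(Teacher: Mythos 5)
Your reduction of the theorem to the single exact formula $\mathrm{ord}\,E(0,\mu_{i}-1)P=\deg P-n_{i}$ is correct, and this is precisely the formula the paper's own proof establishes; but your proposal never proves it. Your primary route --- tracking leading symbols through $\mathrm{Adei}$-type conjugation, writing $x\mapsto x-\nu\partial^{-1}$ and then clearing negative powers of $\partial$ --- stalls exactly where you say it does: at the ``sharp count'' that the order drops by precisely $n_{i}$, no more and no less. That count is not a technical loose end to be patched later; it \emph{is} the theorem, since everything preceding it in your argument is reformulation, and ``I expect to need the explicit transformation rules'' is a statement of intent, not a proof. The difficulty you correctly sense is structural: $\mathrm{ord}$ of the output is the order of a \emph{reduced representative}, defined via minimal degree in $\C(x,\xi)P\cap W[x,\xi]$, and leading-term bookkeeping of an expression involving $\partial^{-1}$ does not control which powers of $x$ can be divided out after clearing denominators, which is exactly where the saturation at $n_{i}$ is decided.

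The route you demote to a ``consistency check'' is the paper's actual proof, and the ingredient missing from your one-sentence sketch of it is Oshima's divisibility criterion, Proposition \ref{reduce degree}. Unwinding the definition, $E(0,\mu_{i}-1)=\mathcal{L}\circ\mathrm{RAd}(x^{-\mu_{i}+1})\circ\mathcal{L}^{-1}\circ\mathrm{R}$. By Proposition \ref{laplace inverse transform of regular point}, $\mathcal{L}^{-1}\mathrm{R}P$ has a regular singular point at $x=0$, no other finite singular points, and semi-simple exponents $\{[0]_{N-n},[\mu_{1}-1]_{n_{1}},\ldots,[\mu_{l}-1]_{n_{l}}\}$ there, where $N=\deg P$; by Proposition \ref{addition and exponents} the addition shifts these to $\{[-\mu_{i}+1]_{N-n},[\mu_{1}-\mu_{i}]_{n_{1}},\ldots,[0]_{n_{i}},\ldots,[\mu_{l}-\mu_{i}]_{n_{l}}\}$, and the hypotheses $\mu_{i}\notin\Z$, $\mu_{j}-\mu_{i}\notin\Z$ make the block $[0]_{n_{i}}$ the \emph{only} integral exponents. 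Proposition \ref{reduce degree} then converts the semi-simple block $[0]_{n_{i}}$ --- that is, the $n_{i}$ solutions $x^{j}+x^{n_{i}}h_{j}(x)$, $j=0,\ldots,n_{i}-1$ --- into divisibility by $x^{n_{i}}$ on the left, and the non-integrality of all remaining exponents forbids dividing any further; this is exactly the saturation statement your symbol computation could not reach, and it is a statement about local solutions at a regular singular point, not about symbols. The resulting degree is $(N-n)+(n-n_{i})=N-n_{i}$, and applying $\mathcal{L}$, which interchanges degree and order, gives $\mathrm{ord}\,E(0,\mu_{i}-1)P=\deg P-n_{i}$. So the gap is concrete: without identifying Proposition \ref{reduce degree} (or proving an equivalent divisibility criterion), neither of your two routes closes.
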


 There are many other works about middle convolutions, equivalently to say, Euler transforms of differential equations with irregular singular points. T. Kawakami considers generalization of middle convolutions to systems of differential equations with irregular singular point which are called generalized Okubo systems in \cite{Kaw}. K. Takemura \cite{Take} and D. Yamakawa \cite{Yama} consider the middle convolutions for the system of the form
 \[
 \frac{d}{dx}Y(x)=\sum_{i=1}^{r}\sum_{j=1}^{k_{i}}\frac{A_{ij}}{(x-c_{i})^{j}}Y(x)
 \] 
where $A_{ij}$ are $n\times n$ complex matrices. In particular, Yamakawa discusses the reduction of the rank of a differential equation which is similar problem to ours.
\subsection*{Acknowledgement}
The author is very grateful to Professor Toshio Oshima who kindly taught the
author his theory of Euler transforms of Fuchsian differential
equations. The author also thanks Shinya Ishizaki. The author studied the theory
of middle convolutions and Euler transforms on his seminar under the
direction of Professor Oshima. Finally the author thanks Noriyuki Abe and
Ryosuke Kodera for the introduction of the Kac-Moody theory.
\section{Operations on localized Weyl algebra with parameters.}
In this section, some operations on localized Weyl algebra are
introduced. In \cite{O}, T. Oshima uses these operators to understand the Euler
transform and he constructs a theory of the Euler transform which
corresponds to the theory of additive middle convolutions for
Schleginger type systems of differential equations studied by Dettweiler
and Reiter in \cite{D-W1} and \cite{D-W2}.

The Weyl algebra $W[x]$ is the $\C$-algebra generated by $x$ and $\partial=\frac{d}{dx}$ with the relation,
\[
[\partial,x]=\partial x-x\partial=1.
\] 
The localization of the Weyl algebra is  $W(x)=\C(x)\otimes_{\C}W[x]$
where $\C(x)$ is the quotient field of $\C[x]$ the polynomial ring with
complex coefficients. For indeterminants
$\xi=(\xi_{1},\ldots,\xi_{n})$, we consider $\C(\xi)$ the field of
rational functions with complex coefficients and fix an algebraic closure $\overline{\C(\xi)}$ of $\C(\xi)$. Then we can define the Weyl algebra with parameters 
\[
W[x,\xi]=\overline{\C(\xi)}\otimes_{\C}W[x].
\]
We can also define the localized Weyl algebra with parameters
 $W(x,\xi)=\overline{C(\xi)}\otimes_{\C}W(x)$.

The element $P\in W(x,\xi)$ (resp. $P\in W[x,\xi]$) is uniquely written by
\[
P=\sum_{i=0}^{m}p_{i}(x;\xi)\partial^{i}
\]
with $p_{i}(x,\xi)\in\C(x,\xi)=\overline{\C(\xi)}\otimes_{\C}\C(x)$
(resp. $p_{i}(x,\xi)\in\C[x,\xi]=\overline{\C(\xi)}\otimes_{\C}\C[x]$)
for $i=0,\ldots,m$. According to this representation, the order of $P$ denoted by $\mathrm{ord}\,P$ is defined by the maximum integer $i$ such that $p_{i}(x,\xi)\neq 0$. We also define the degree of $P\in W[x;\xi]$ by maximum of  degrees of $p_{i}(x;\xi)$ as polynomials of $x$, and denoted by $\deg P$. 
\begin{df}
For $P(x,\partial)\in W(x,\xi)$ of $\mathrm{ord}\,P>0$, we say that
 $P(x,\partial)$ is irreducible when it is satisfied that if we can write $P=QR$ for some
 $Q,R\in W(x,\xi)$ then $\mathrm{ord}\, Q\cdot \mathrm{ord}\, R=0$.

\end{df}
We recall some operations on $W[x,\xi]$ and $W(x,\xi)$. Details of these operations can be found in \cite{O}.
\begin{df}[The Fourier-Laplace transform]
The Fourier-Laplace transform on $W[x,\xi]$ is the following algebra isomorphism
\[
\begin{array}{rcccc}
\mathcal{L}\colon&W[x,\xi]&\longrightarrow &W[x,\xi]&\\
&x&\longmapsto&-\partial&\\
&\partial&\longmapsto&x&\\
&\xi_{i}&\longmapsto &\xi_{i}&(i=1,\ldots,n).
\end{array}
\]
\end{df}
Sometimes we identify $P\in W(x,\xi)$ and $f(x)P$ for $f(x)\in
\C(x,\xi)$ because differential equations $P(x,\partial)u=0$ and $f(x)P(x,\partial)u=0$ can be identified.  If for $P,Q\in W(x,\xi)$, there exist $f(x)\in\C(x,\xi)$ such that $P=f(x)Q$, then we write
\[
P\sim Q.
\] 
\begin{df}[The reduced representative] The reduced representative of a nonzero element $P\in W[x,\xi]$ is an element of $\C(x,\xi)P\cap W[x,\xi]$ of the minimal degree and denoted by $\mathrm{R}P$.

\end{df}

\begin{df}
For $h\in \C(x,\xi)$, we define  the automorphism of $W(x,\xi)$ by
\[
\begin{array}{ccccc}
\mathrm{Adei}(h)\colon&W(x;\xi)&\longrightarrow &W(x;\xi)&\\
&x&\longmapsto&x&\\
&\partial&\longmapsto&\partial-h\\
&\xi_{i}&\longmapsto&\xi_{i}&(i=1,\ldots,n).
\end{array}
\]
\end{df}
\begin{df}[The addition at the singular point]
For $c\in\C$ and $f(\xi)\in\C(\xi)$ we call the operator
 $\mathrm{Adei}(\frac{f(\xi)}{x-c})$ the addition at the singular point
 $c$ and denote it by $\mathrm{Ad}((x-c)^{f(\xi)})$. We also define $\mathrm{RAd}((x-c)^{f(\xi)})=\mathrm{R}\circ\mathrm{Ad}((x-c)^{f(\xi)})$.
\end{df}
\begin{rem}\label{addition}
Let $f(x)$ and $g(x)$ be sufficiently many differentiable functions. Then the Leibniz rule tells us that
 \[
 \frac{d}{dx}f(x)g(x)=(f(x)\frac{d}{dx}+\frac{d}{dx}f(x))g(x).
 \]
 Hence for some $c,\lambda\in\C$ we have
 \[
 (x-c)^{\lambda}\frac{d}{dx}(x-c)^{-\lambda}g(x)=(\frac{d}{dx}-\frac{\lambda}{x-c})g(x).
 \]
 Therefore $\mathrm{Adei}(\frac{\lambda}{x-c})\frac{d}{dx}$ corresponds to
 \[
  (x-c)^{\lambda}\frac{d}{dx}(x-c)^{-\lambda}.
 \]
\end{rem}
\begin{df}[The $e^{p(x)}$-twisting]
For $p(x)\in\C[x]$, we define
\[
 \mathrm{Ade}({p(x)})=\mathrm{Adei}(p'(x))
\]
and call this the $e^{p(x)}$-twisting. Here $p'(x)=\frac{d}{dx}p(x)$.
\end{df}
\begin{rem}
As well as additions at singular points, the $e^{p(x)}$-twisting corresponds
 to the operation
\[
 \frac{d}{dx}\longmapsto e^{p(x)}\frac{d}{dx}e^{-p(x)}=(\frac{d}{dx}-p'(x)).
\]
\end{rem}
\begin{df}[The twisted Euler transform]
For $\alpha\in \C$ and $f(\xi)\in\C(\xi)$ we define the operator on $W[x,\xi]$,
\[
E(\alpha,f(\xi))=\mathcal{L}\circ\mathrm{RAd}((x+\alpha)^{-f(\xi)})\circ\mathcal{L}^{-1}\circ
\mathrm{R}.
\]
 For
 $\overline{\alpha}=(\alpha_{1},\ldots,\alpha_{m})\in \C^{m}$ and
 $F(\xi)=(f_{1}(\xi),\ldots,f_{m}(\xi))\in (\C(\xi))^{m}$, we
 define the operator
\[
E(p(x);\overline{\alpha},F(\xi))=\mathrm{Ade}(p(x))\circ\prod_{i=1}^{m}E(\alpha_{i},f_{i}(\xi))\circ\mathrm{Ade}(-p(x)) .
\]
We call $E(\alpha,f(\xi))$ and $\mathrm{E}(p(x);\overline{\alpha},F(\xi))$ twisted Euler transforms. In particular we call $\mathrm{E}(0,f(\xi))$ an Euler transform.
\end{df}
\begin{rem}
A twisted Euler transform $E(\alpha,f(\xi))$ corresponds to the  following integral transform
\begin{multline*}
\int_{-i\infty}^{i\infty}(y+\alpha)^{-f(\xi)}\int_{c}^{\infty}g(x)e^{-xy}\,dx\, e^{zy}\,dy\\
=\frac{1}{\Gamma(f(\xi))}\int_{c}^{z}g(x)(z-x)^{f(\xi)-1}e^{-\alpha(z-x)}\,dx.
\end{multline*}
\end{rem}
If we put $\alpha=0$, then this is a Riemann-Liouville integral. Hence
$E(0,f(\xi))$ can be seen as  the classical Euler transform.
\begin{rem}\label{Euler and twisted Euler}
A twisted Euler transform can be written as the composition of
 $e^{p(x)}$-twistings and an  Euler transform. We define the parallel displacement for $\alpha\in\C$ by
\[
\begin{array}{ccccc}
\mathrm{P}(h)\colon&W(x;\xi)&\longrightarrow &W(x;\xi)&\\
&x&\longmapsto&x-\alpha&\\
&\partial&\longmapsto&\partial\\
&\xi_{i}&\longmapsto&\xi_{i}&(i=1,\ldots,n).
\end{array}
\]
Then we can see that $\mathcal{L}^{-1}\mathrm{Ade}(\alpha x)=\mathrm{P}(-\alpha)\mathcal{L}^{-1}$ and $\mathrm{Ade}(-\alpha x)\mathcal{L}=\mathcal{L}\mathrm{R}(-\alpha).$ Hence we have
\[
E(\alpha,f(\xi))=\mathrm{Ade}(\alpha x)E(0,f(\xi))\mathrm{Ade}(-\alpha x).
\]

\end{rem}
\begin{prop}\label{mcirred}
We take an element $P(x,\partial)\in W(x,\xi)$ and assume
 that $P(x,\partial)$ is irreducible and  $\mathrm{ord}\,P>1 .$ Then
 we have followings.
\begin{enumerate}
\item We have 
\[
E(*,0)P\sim P
\]
where we can put any $\alpha\in\C$ into $*$.
\item If $E(\alpha,f(\xi))P$ is irreducible and $\mathrm{ord}\,E(\alpha,f(\xi))P>1$ for an $\alpha\in\C$ and an
      $f(\xi)\in\C(\xi)$, we have
\[
 E(\alpha,-f(\xi))E(\alpha,f(\xi))P=E(\alpha,0)P\sim P
\]
.
\end{enumerate}
\end{prop}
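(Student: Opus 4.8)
The plan is to conjugate the parameter $\alpha$ away and then reduce both assertions to two parallel lemmas describing how $\mathcal{L}$ interacts with the reduction $\mathrm{R}$. I will use repeatedly that in $W(x,\xi)$ the order is additive and the units are exactly $\C(x,\xi)^{\times}$ (the order-zero elements); consequently $P\sim Q$ is equivalent to $W(x,\xi)P=W(x,\xi)Q$, the operator $\mathrm{R}$ satisfies $\mathrm{R}A=\rho A$ with $\rho\in\C(x,\xi)^{\times}$ and so preserves left ideals while returning an operator of trivial $x$-content, and two reduced operators that are $\sim$ must differ by a scalar in $\overline{\C(\xi)}$ (a Gauss-lemma comparison of contents). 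To remove $\alpha$ I invoke Remark~\ref{Euler and twisted Euler}: $\mathrm{Ade}(\pm\alpha x)=\mathrm{Adei}(\pm\alpha)$ are mutually inverse automorphisms fixing $\C(x,\xi)$, hence preserving order, irreducibility and $\sim$, and $E(\alpha,f(\xi))=\mathrm{Ade}(\alpha x)E(0,f(\xi))\mathrm{Ade}(-\alpha x)$; so it suffices to treat $\alpha=0$, applied to $\mathrm{Ade}(-\alpha x)P$.

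For (1) the key is: \emph{if $V\in W[x,\xi]$ is reduced and irreducible with $\mathrm{ord}\,V>1$, then $\mathcal{L}^{-1}(V)$ is reduced.} A nonconstant $x$-content $c(x)$ of $\mathcal{L}^{-1}(V)$ would give $V=\mathcal{L}(c(x))\,W$ with $\mathcal{L}(c(x))$ a polynomial in $\partial$ of degree $\deg c\ge 1$ and $W=\mathcal{L}\bigl(\mathrm{R}\,\mathcal{L}^{-1}(V)\bigr)$; if $\mathrm{ord}\,W\ge 1$ this contradicts irreducibility, and if $\mathrm{ord}\,W=0$ then $W$ is a unit, so $V\sim\mathcal{L}(c(x))$ is a one-variable $\partial$-polynomial of order $\mathrm{ord}\,V>1$, hence reducible --- again absurd. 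Taking $V=\mathrm{R}P$ shows $S:=\mathcal{L}^{-1}(\mathrm{R}P)$ is reduced, so $\mathrm{R}S=S$ up to a scalar and $E(0,0)P=\mathcal{L}\bigl(\mathrm{R}\,\mathcal{L}^{-1}\mathrm{R}P\bigr)=\mathcal{L}(S)=\mathrm{R}P\sim P$.

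For (2), by (1) it is enough to prove $E(0,-f)E(0,f)P=\mathrm{R}P$ up to a scalar. Introduce $R_{0}=\mathrm{R}P$, $S=\mathcal{L}^{-1}R_{0}$, $U=\mathrm{R}\,\mathrm{Adei}(-f/x)(S)$, $Q=E(0,f)P=\mathcal{L}(U)$, and then $R_{0}'=\mathrm{R}Q$, $S'=\mathcal{L}^{-1}R_{0}'$, $U'=\mathrm{R}\,\mathrm{Adei}(f/x)(S')$, so that $E(0,-f)E(0,f)P=\mathcal{L}(U')$. Chasing left ideals, using that $\mathrm{R}$ preserves them and that $\mathrm{Adei}(\pm f/x)$ are mutually inverse automorphisms (so the two twists cancel), the goal $W(x,\xi)U'=W(x,\xi)S$ collapses to the single identity $W(x,\xi)U=W(x,\xi)S'$. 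Now $Q=d(x)R_{0}'$ with $d=\mathrm{cont}_{x}Q$, and applying $\mathcal{L}^{-1}$ yields $U=d(\partial)\,S'$; hence these two ideals coincide exactly when $d$ is a scalar, i.e. when $Q=E(0,f)P$ is reduced.

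The main obstacle is precisely this reducedness: $\mathcal{L}(U)$ need not be reduced even though $U$ is, and it is here that both hypotheses on $E(0,f)P$ are consumed. I expect to dispatch it by the companion lemma: \emph{if $U$ is reduced and irreducible and $\mathcal{L}(U)$ is irreducible of order $>1$, then $\mathcal{L}(U)$ is reduced.} A nonconstant content of $\mathcal{L}(U)$ gives $U=d(\partial)S'$ with $\deg_{\partial}d\ge 1$; irreducibility of $U$ forces $\mathrm{ord}\,S'=0$, so $U=d(\partial)w(x)$ and $Q=\mathcal{L}(U)=d(x)\,w(-\partial)\sim w(-\partial)$, a $\partial$-polynomial of order $\mathrm{ord}\,Q>1$ and therefore reducible, contradicting the irreducibility of $Q$. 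Thus $d$ is a scalar, the ideal chain closes, $U'\sim S$ with both sides reduced, whence $U'$ is a scalar multiple of $S$ and $\mathcal{L}(U')$ a scalar multiple of $\mathrm{R}P=E(0,0)P$; this proves $E(0,-f)E(0,f)P=E(0,0)P\sim P$.
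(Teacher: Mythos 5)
Your part (1) is correct and is essentially the paper's own argument: your key lemma (if $V\in W[x,\xi]$ is reduced and irreducible with $\mathrm{ord}\,V>1$, then $\mathcal{L}^{-1}(V)$ is reduced) is exactly what Lemma~\ref{ueue} together with the first half of the paper's proof establishes, namely that the content of $\mathcal{L}^{-1}\mathrm{R}P$ must be a scalar; the preliminary reduction to $\alpha=0$ via Remark~\ref{Euler and twisted Euler} is harmless. Your left-ideal reformulation of part (2) is also sound as far as it goes: the chain ``goal $\Leftrightarrow U'\sim S \Leftrightarrow U\sim S' \Leftrightarrow d$ is a scalar'' is correct, and you have correctly isolated the crux, namely that $Q=E(0,f(\xi))P=\mathcal{L}(U)$ must be shown to be reduced.

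However, your resolution of that crux has a genuine gap. The companion lemma carries the hypothesis that $U$ is reduced \emph{and irreducible}, and you apply it to $U=\mathrm{R}\,\mathrm{Adei}(-f/x)(S)$, $S=\mathcal{L}^{-1}\mathrm{R}P$, without ever verifying irreducibility of $U$. Since $\mathrm{Adei}(\pm f/x)$ and $\mathrm{R}$ preserve irreducibility, $U$ is irreducible if and only if $S$ is; and irreducibility of $S$ does \emph{not} follow from irreducibility of $P$, because $\mathcal{L}$ is an automorphism of $W[x,\xi]$ but not of $W(x,\xi)$, and it does not preserve irreducibility in $W(x,\xi)$. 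Concretely, $P=x^{3}-x\partial^{2}-2\partial=(x^{2}-\partial^{2})x$ is reduced and irreducible in $W(x)$ of order $2>1$ (irreducibility of $\partial^{2}-x^{2}$ follows since the Riccati equation $r'+r^{2}=x^{2}$ has no rational solution), yet $\mathcal{L}^{-1}(P)=\partial^{3}-x^{2}\partial=(\partial^{2}-x^{2})\partial$ is reducible. Hence the dangerous case $U=d(\partial)S'$ with $\deg d\ge 1$ and $\mathrm{ord}\,S'\ge 1$ is excluded by nothing you prove: irreducibility of $Q$ alone cannot exclude it (the left factor $d(x)$ of $Q=d(x)R_{0}'$ has order $0$), irreducibility of $U$ is unavailable, and transporting the factorization $U=d(\partial)S'$ back to a factorization of $S$ does not contradict irreducibility of $P$ either, again because factorizations do not pass through $\mathcal{L}$. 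Note that the paper's own proof of (2) never attributes irreducibility to the Fourier-side operator $U$: it consumes the hypothesis on $E(\alpha,f(\xi))P$ by applying part (1) to that operator itself, inserting $E(*,0)$ and then cancelling $\mathrm{RAd}((x+\alpha)^{f(\xi)})\mathrm{RAd}((x+\alpha)^{-f(\xi)})$. To complete your argument you would need to derive the triviality of $d$ from the two stated hypotheses without assuming $U$ irreducible; as written, the central step of your proof rests on an assumption that is unproved and, in general, false.
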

Before proving this proposition, we see the following lemma.
\begin{lem}\label{ueue}
Suppose that $P\in W[x,\xi]$ is irreducible. If $E(*,0)P=g(x)\in
 \C[x,\xi]$, then there exists $f(x)\in \C[x,\xi]$
 such that 
\[
 P\sim f(\partial)g(x)
\]
and the degree of $f(x)$ as the polynomial of $x$ is at most one.
\end{lem}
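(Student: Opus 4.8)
The plan is to unwind the definition of $E(*,0)$, read a factorization of $P$ off the hypothesis, and then invoke irreducibility to control the degree. Since $\mathrm{Ad}((x+\alpha)^{0})=\mathrm{Adei}(0)=\mathrm{id}$, we have $\mathrm{RAd}((x+\alpha)^{-0})=\mathrm{R}$, so $E(\alpha,0)$ is independent of $\alpha$ and equals
\[
E(\alpha,0)=\mathcal{L}\circ\mathrm{R}\circ\mathcal{L}^{-1}\circ\mathrm{R}.
\]
First I would set $Q:=\mathcal{L}^{-1}(\mathrm{R}P)\in W[x,\xi]$. The hypothesis $E(*,0)P=g(x)$ then reads $\mathcal{L}(\mathrm{R}Q)=g(x)$, and since $\mathcal{L}$ is an algebra isomorphism with $\mathcal{L}^{-1}(x)=\partial$, this yields
\[
\mathrm{R}Q=\mathcal{L}^{-1}(g(x))=g(\partial),
\]
a constant-coefficient (in $x$) operator.

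Next I would extract the shape of $Q$ from $\mathrm{R}Q=g(\partial)$. By the definition of the reduced representative there is $r\in\C(x,\xi)$ with $\mathrm{R}Q=rQ$, whence $Q=r^{-1}g(\partial)$. Writing $g(\partial)=\sum_i c_i\partial^i$ with $c_i\in\overline{\C(\xi)}$ and some $c_i\neq 0$, the requirement $Q\in W[x,\xi]$ forces every coefficient $r^{-1}c_i$ to lie in $\C[x,\xi]$, so $r^{-1}=:s(x,\xi)$ is itself a polynomial in $x$ and $Q=s(x,\xi)\,g(\partial)$. Applying $\mathcal{L}$ (sending $x\mapsto-\partial$, $\partial\mapsto x$) gives
\[
\mathrm{R}P=\mathcal{L}(Q)=s(-\partial,\xi)\,g(x),
\]
and since $P\sim\mathrm{R}P$ we obtain $P\sim f(\partial)g(x)$ with $f(x):=s(-x,\xi)$, whose degree in $x$ equals $\deg_x s$.

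It then remains to bound $\deg_x f\le 1$, and this is where irreducibility enters; I expect it to be the one genuinely non-formal step. Suppose $\deg_x f=\mathrm{ord}\,f(\partial)\ge 2$. Because $\overline{\C(\xi)}$ is algebraically closed, the constant-coefficient operator $f(\partial)$ splits as a product $T_1T_2$ with $\mathrm{ord}\,T_1,\mathrm{ord}\,T_2\ge 1$. Then from $P=h\,f(\partial)g(x)$ (with $h\in\C(x,\xi)$, $h\neq 0$) we read off the factorization $P=(hT_1)(T_2\,g(x))$ in $W(x,\xi)$, where $\mathrm{ord}(hT_1)=\mathrm{ord}\,T_1\ge 1$ and, as $g(x)$ is a nonzero order-$0$ element, $\mathrm{ord}(T_2\,g(x))=\mathrm{ord}\,T_2\ge 1$. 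This contradicts the irreducibility of $P$, so $\deg_x f\le 1$. Thus the crux of the argument is this final step: algebraic closedness of $\overline{\C(\xi)}$ lets one peel off a linear (order-$1$) factor from a high-order constant-coefficient operator, which is incompatible with $P$ being irreducible; everything preceding it is formal bookkeeping with $\mathcal{L}$ and $\mathrm{R}$, the only point requiring care being the verification in the second paragraph that $r^{-1}$ is a genuine polynomial rather than merely a rational function.
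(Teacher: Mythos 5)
Your proof is correct and follows essentially the same route as the paper's: unwind $E(*,0)=\mathcal{L}\mathrm{R}\mathcal{L}^{-1}\mathrm{R}$, apply $\mathcal{L}^{-1}$ to get $\mathrm{R}\mathcal{L}^{-1}\mathrm{R}P=g(\partial)$, read off $\mathcal{L}^{-1}\mathrm{R}P=f(x)g(\partial)$ from the definition of the reduced representative, apply $\mathcal{L}$, and conclude via irreducibility of $P$ that $f$ must be an irreducible (hence linear, by algebraic closedness of $\overline{\C(\xi)}$) polynomial. The only difference is that you make explicit two steps the paper leaves implicit, namely that the rational factor $r^{-1}$ is actually a polynomial and that a degree-two-or-more $f(\partial)$ splits into lower-order factors yielding a forbidden factorization of $P$.
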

\begin{proof}
By the assumption, we have
\[
E(*,0)P=\mathcal{L}\mathrm{R}\mathcal{L}^{-1} \mathrm{R}P=g(x). 
\]
This implies that
\[
 g(\partial)=\mathcal{L}^{-1}g(x)=\mathrm{R}\mathcal{L}^{-1} \mathrm{R}P,
\]
that is, there exists an $f(x)\in\C[x,\xi]$ such that
\[
 \mathcal{L}^{-1} \mathrm{R}P=f(x)g(\partial).
\]
Equivalently we have
\[
 P\sim f(-\partial)g(x).
\]
By the irrducibility of $P$, $f(x)$ must be an irreducible polynomial, i.e.,
 its degree is at most one.   
\end{proof}
\begin{proof}[proof of Proposition \ref{mcirred}]
\end{proof}
Let us put $Q=E(*,0)P=\mathcal{L}
\mathrm{R}\mathcal{L}^{-1} \mathrm{R}P$. Then we have
\[
 \mathrm{R}\mathcal{L}^{-1}\mathrm{R}P=\mathcal{L}^{-1}Q. 
\]
Hence there exists an $f(x)\in \C[x,\xi]$ such that
\[
 \mathcal{L}^{-1}\mathrm{R}P=f(x)\mathcal{L}^{-1}Q,
\]
that is, we have
\[
 P\sim f(-\partial)Q.
\]
By Lemma \ref{ueue} and the assumption $\mathrm{ord}\,P>1$, we can see that  $\mathrm{ord}\,Q\neq 0$. Since $P$ is
irreducible, we have $f(\partial)\in \C$.  Hence we obtain 
\[
E(*;0)P\sim P.
\]

Let us recall that
\[
\mathrm{RAd}((x+\alpha)^{f(\xi)})\circ\mathrm{RAd}((x+\alpha)^{-f(\xi)})Q\sim Q
\]
for $Q\in W[x,\xi]$. 

By the assumption,  $E(\alpha,f(\xi))P$ is
irreducible and $\mathrm{ord}\,
E(\alpha,f(\xi))P>1$.  Hence we obtain
\begin{align*}
&E(\alpha,-f(\xi))E(\alpha,f(\xi))= E(\alpha,-f(\xi))E(*,0)E(\alpha,f(\xi))P\\
&=\mathcal{L}^{-1}\mathrm{RAd}((x+\alpha)^{f(\xi)})\mathrm{RAd}((x+\alpha)^{-f(\xi)})\mathcal{L}\circ \mathrm{R}P\\
&\sim P.
\end{align*}
\section{Formal solutions of differential equations}
We study formal solutions of differential equations with Laurent series
coefficients in this section. Although this section will contain many well-known
facts, we will give proofs of these for the completeness of the
paper. One of the purpose of this section is to understand when a
system of fundamental solutions has no logarithmic singularities even
though differences of characteristic exponents are integers. One of the
answer of this question is given by Oshima for Fuchsian differential
equations in \cite{O}.  We apply his result to formal
solutions of differential equations with Laurent series coefficients.
\subsection{Formal solutions at infinity.}
Let $\mathcal{K}$ be the field of Laurent series of $x^{-1}$, also write $\C[[x^{-1}]][x]$, and $\mathcal{K}\langle \partial\rangle$ the ring of differential operators with coefficients in $\mathcal{K}$.

Let us consider an element in $\mathcal{K}\langle\partial\rangle$,
\[
P(x,\partial)=a_{n}(x)\partial^{n}+a_{n-1}(x)\partial^{n-1}+\ldots +a_{1}(x)\partial+a_{0}(x)  
\] 
for $a_{i}(x)\in \C[[x^{-1}]][x]$. For
$a(x)=\sum_{n=-\infty}^{\infty}a_{n}x^{-n}\in \C[[x^{-1}]][x]$,
$v(a(x))$ denote the valuation of $a(x)$, i.e.,
$v(a(x))=\min\{s\in\Z\mid
a_{s}\neq 0\}$.
We consider ``normal'' formal solutions near $x=\infty$, i.e., solutions of the form $x^{\mu}f(x)$ for some $\mu\in \C$ and $f(x)\in \C[[x^{-1}]]$  $(f(\infty)\neq 0)$.

 The group ring generated by $\{x^{\mu}\mid \mu\in\C\}$ is denoted by $\C[x^{\mu}]$. Also the polynomial
ring of $\log x^{-1}$ with coefficients in $\C[[x^{-1}]]$ is denoted by $\C[[x^{-1}]][\log x^{-1}]$. We consider the tensor product $\C[x^{\mu}]\otimes_{\C}\C[[x^{-1}]][\log x^{-1}]$ as $\C$-algebras and we simply write these elements $x^{\mu}\otimes h(x)$ by $x^{\mu}h(x)$ for $\mu\in\C$ and $h(x)\in \C[[x^{-1}]][\log x^{-1}]$.

As an analogue of systems of fundamental solutions around regular
singular points, we consider subspaces of $\C[x^{\mu}]\otimes_{\C}\C[[x^{-1}]][\log x^{-1}]$ which are
spanned by formal power series with logarithmic terms. 
 
\begin{df}\label{normal}
Let us take $\mu_{1},\ldots \mu_{r}\in \C,\ \text{such that}\ \mu_{i}-\mu_{j}\notin\Z \ \text{for}\ i\neq j,$ increasing sequences of positive integers $0=n_{0}^{i}<\cdots<n_{l_{i}}^{i}$ for $\ i=1,\ldots,r$ and $m^{i}_{j}\in\Z_{\ge0}\ \text{for}\ i=1,\ldots,r,\,j=0,\ldots,l_{i}$. We put $h^{i}_{j}=\sum_{k=1}^{j}m^{i}_{k}$ and $n=\sum_{i=1}^{r}h^{i}_{l_{i}}$.

Then we consider the following $n$ functions, 
\[
f(\mu_{i}+n^{i}_{j};k)(x)=x^{\mu_{i}+n^{i}_{j}}\sum_{l=0}^{h^{i}_{j}+k}\begin{pmatrix}h^{i}_{j}+k\\ l\end{pmatrix}c^{ij}_{h^{i}_{j}+k-l}(x)(\log x^{-1})^{l},
\]
where $\begin{pmatrix}k\\l\end{pmatrix}$ are binomial coefficients,  $c^{ij}_{k}(x)\in \C[[x^{-1}]]$ and $c_{0}^{i0}(\infty)\neq 0$. We
 call the $\C$-vector subspace spanned by these functions the space of formal regular series. 
\end{df}

\begin{lem}\label{regular series}
We use the same notations as in Definition \ref{normal}.  Let $V$ be a $n$-dimensional space of formal regular series. For $i=1,\ldots,r$, we define $\C$-linear maps
\[
\begin{array}{cccc}
\Phi_{i}\colon &V&\longrightarrow &\C[x^{\mu}]\otimes_{\C}\C[[x^{-1}]][\log x^{-1}]\\
&f(x)&\mapsto &\partial\frac{f(x)}{f(\mu_{i};0)(x)}.\\
\end{array}
\]
Then each image $\Phi_{i}(V)$ is  $n-1$-dimensional space of formal regular series.
\end{lem}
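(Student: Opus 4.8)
The plan is to factor $\Phi_i$ as the composition of the division map $D_i\colon f\mapsto f/f(\mu_i;0)$ with $\partial$, and to analyze the two factors separately. Since $h^i_0=0$, the normalizing function is $f(\mu_i;0)(x)=x^{\mu_i}c^{i0}_0(x)$ with $c^{i0}_0(\infty)\neq0$, hence $x^{\mu_i}$ times a unit of $\C[[x^{-1}]]$. Dividing each generator $f(\mu_{i'}+n^{i'}_j;k)$ by it therefore shifts every exponent by $-\mu_i$ and replaces each coefficient $c^{i'j}_s(x)$ by $c^{i'j}_s(x)/c^{i0}_0(x)\in\C[[x^{-1}]]$, leaving both the binomial--logarithmic shape and the leading-coefficient conditions intact. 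So $D_i$ is a linear isomorphism of $V$ onto another $n$-dimensional space of formal regular series $\widetilde V$ with base exponents $\mu_{i'}-\mu_i$; in particular the class $i'=i$ has become the integer class (base exponent $0$), and $D_i(f(\mu_i;0))=\widetilde g(0;0)=1$. This reduces the lemma to understanding $\partial$ on $\widetilde V$.

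For the dimension count I would use that $\partial$ annihilates only constants inside $\C[x^\mu]\otimes_\C\C[[x^{-1}]][\log x^{-1}]$: on each coset $\mu+\Z$ the operator is triangular in the filtration by $x$-power and $\log$-degree, with leading factor $(\mu-s)$ on $x^{\mu-s}$, so it is injective on every non-integer coset and has kernel exactly $\C\cdot1$ on the integer coset. As the only constant lying in $\widetilde V$ is $\widetilde g(0;0)=1$, the kernel of $\partial|_{\widetilde V}$, equivalently of $\Phi_i$, is the line $\C\,f(\mu_i;0)$, whence $\dim\Phi_i(V)=n-1$.

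The main work is to show $\partial(\widetilde V)$ again has the shape of Definition \ref{normal}. Writing $\partial=x^{-1}\vartheta$ with the derivation $\vartheta=x\partial$ (so $\vartheta(x^e)=e\,x^e$, $\vartheta(\log x^{-1})=-1$, and $\vartheta(\C[[x^{-1}]])\subseteq x^{-1}\C[[x^{-1}]]$), a direct computation using the identity $l\binom{N}{l}=N\binom{N-1}{l-1}$ gives, for a generator $\widetilde g(e;k)$ of exponent $e=\nu+n^{i'}_j$ and parameter $N=h^{i'}_j+k$,
\[
\vartheta\,\widetilde g(e;k)=e\,\widetilde g(e;k)-N\,\widetilde g(e;k-1)+(\text{higher-valuation terms}),
\]
the correction coming from $\vartheta(\C[[x^{-1}]])$ and affecting only the arbitrary lower-order coefficients. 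Thus $\partial$ relates $\widetilde g(e;k)$ triangularly to $x^{-1}\widetilde g(e;k)$ and $x^{-1}\widetilde g(e;k-1)$, both of binomial--logarithmic shape with exponent lowered by $1$. For the non-integer classes $e\neq0$ makes this triangular change invertible, so $\partial$ preserves dimension and shape and yields a formal regular series space of base exponent $\mu_{i'}-\mu_i-1$, whose new leading coefficient $e\,\widetilde{c}^{\,i'0}_{0}(\infty)$ is nonzero. For the integer class the scalar $e$ vanishes on the exponent-$0$ chain: its top $\widetilde g(0;0)=1$ dies, while $\widetilde g(0;k)$ for $k\ge1$ maps, with nonzero leading coefficient $-k$, onto a chain one shorter at exponent $-1$; the higher chains (exponent $n^i_j\ge1$) shift to $n^i_j-1$ keeping their length. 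Collecting these images produces a basis of the form required by Definition \ref{normal}.

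The step I expect to be the real obstacle is precisely this last reorganization for the integer class: one must verify that once the eigenvalue $e=0$ has killed the constant, the surviving exponent-$0$ chain re-indexes cleanly into a genuine binomial--logarithmic chain at exponent $-1$ (no spurious $\log$-structure remaining, new leading coefficient nonzero), and that this exponent $-1$ does not collide with the shifted higher chains $n^i_j-1$. The fact that the power-series coefficients in Definition \ref{normal} are arbitrary is what makes all sub-leading terms harmless, so in the end the argument only has to match leading exponents, the binomial $\log$-normalization, and the multiplicities.
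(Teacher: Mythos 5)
Your proof is correct, and it is organized genuinely differently from the paper's, although the two rest on the same core computation. The paper does not factor $\Phi_{i'}$ through division: it applies the product rule directly to $\partial\bigl(f(\mu_{i}+n^{i}_{j};k)/f(\mu_{i'};0)\bigr)$ and writes the result as $x^{\mu_{i}-\mu_{i'}+n^{i}_{j}-1}\sum_{l}\binom{h^{i}_{j}+k}{l}G^{ij}_{h^{i}_{j}+k-l}(x)(\log x^{-1})^{l}$, where the new coefficient sequence $G^{ij}_{m}$ depends on $(i,j,m)$ but not on $k$; this makes preservation of the binomial--logarithmic shape automatic, and the proof reduces to leading coefficients: $G^{i0}_{0}(\infty)=(\mu_{i}-\mu_{i'})c^{i0}_{0}(\infty)c^{i'0}_{0}(\infty)^{-1}\neq 0$ for $i\neq i'$, while for $i=i'$ one has $G^{i'0}_{0}=0$ and instead $G^{i'0}_{1}(\infty)\neq 0$. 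Your factorization $\Phi_{i}=\partial\circ D_{i}$ together with $\ker\partial=\C$ buys the part the paper leaves implicit: the dimension count $\dim\Phi_{i}(V)=n-1$, hence also the linear independence of the nonzero images, is immediate in your setup and is never actually proved in the paper. Conversely, the paper's bookkeeping settles the step you flagged as the likely obstacle, and it does go through: in your notation the image of $\widetilde{g}(0;k)$ is $\phi_{k}=x^{-1}\sum_{l\le k-1}\binom{k}{l}H_{k-l}(\log x^{-1})^{l}$ with $H_{0}=0$ and $H_{1}(\infty)=-1$, and the identity $(m+1)\binom{m}{l}\tfrac{1}{m+1-l}=\binom{m+1}{l}$ shows that $\phi_{m+1}=(m+1)\psi_{m}$, where $\psi_{m}$ is the genuine binomial chain built from the re-indexed sequence $\widetilde{H}_{j}=H_{j+1}/(j+1)$; its leading coefficient $\widetilde{H}_{0}(\infty)=H_{1}(\infty)$ is nonzero, and the new exponents $-1<n^{i}_{1}-1<\cdots$ stay distinct, so there is no collision with the shifted higher chains. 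Inserting this single identity completes your argument; the paper itself relies on the same fact tacitly, checking only that $G^{i'0}_{1}(\infty)\neq 0$.
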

\begin{proof}
We can see that 
\[
\partial f(\mu_{i}+n^{i}_{j};k)(x)=x^{\mu_{i}+n^{i}_{j}-1}\sum_{l=0}^{h^{i}_{j}+k}\begin{pmatrix}h^{i}_{j}+k\\ l\end{pmatrix}H^{ij}_{h^{i}_{j}+k-l}(x)(\log x^{-1})^{l},
\]
where 
\[
H^{ij}_{h^{i}_{j}+k-l}=\{((\mu_{i}+n_{k+1})+x\partial)c_{h^{i}_{j}+k-l}^{ij}(x)-(h^{i}_{j}+k-l)c_{h^{i}_{j}+k-l-1}^{ij}(x)\}.
\]
Also we can see that
\[
\partial \frac{1}{f(\mu_{i'};0)(x)}=x^{-\mu_{i'}-1}F^{i'}(x),
\]
where $F^{i'}(x)=((-\mu_{i'}+x\partial)c_{0}^{i'0}(x)^{-1}).$ Clearly $F^{i'}(\infty)\neq 0$. 

Hence we have
\begin{equation*}
\partial (\frac{f(\mu_{i}+n^{i}_{j};k)(x)}{f(\mu_{i'};k)(x)})=x^{\mu_{i}-\mu_{i'}+n^{i}_{j}-1}\sum_{l=0}^{h^{i}_{j}+k}\begin{pmatrix}h^{i}_{j}+k\\l\end{pmatrix}G_{h^{i}_{j}+k-l}^{ij}(x)(\log x^{-1})^{l}.
\end{equation*}
where 
\[
G_{k}^{ij}(x)=H_{k}^{ij}(x)f(\mu_{i'};0)(x)^{-1}+c_{k}^{ij}(x)F^{i'}(x).
\]
The image $\Phi_{i'}(V)$ is spanned by $\left\{\partial (\frac{f(\mu_{i}+n^{i}_{j};k)(x)}{f(\mu_{i'};k)(x)})\right\}_{i,j,k}$ .
Since $G_{0}^{i0}(x)=((\mu_{i}-\mu_{i'})+x\partial)c^{i0}_{0}(x)c^{i'0}_{0}(x)^{-1}$, we have $G_{0}^{i0}(\infty)\neq 0$ if $i\neq i'$. If $i=i'$, $G_{0}^{i0}(x)=0$. However, we have 
\[
\partial(\frac{f(\mu_{i'};1)(x)}{f(\mu_{i'};0)(x)})=\partial(c^{i'0}_{1}(x)c^{i'0}_{0}(x)^{-1}+\log x^{-1}).
\]
Hence we obtain that $G_{1}^{i0}(x)=(1+x\partial c^{i'0}_{1}(x)c^{i'0}_{0}(x)^{-1})$ and $G_{1}^{i0}(\infty)\neq 0$.  Hence $\Phi_{i'}(V)$ is the space of formal regular series.

\end{proof}
\begin{lem}\label{equiv}
Let $P(x,\partial)\in\mathcal{K}\langle\partial\rangle$ be 
\[
P(x,\partial)=a_{n}(x)\partial^{n}+a_{n-1}(x)\partial^{n-1}+\ldots +a_{1}(x)\partial+a_{0}(x), 
\] 
for $a_{i}(x)\in \C[[x^{-1}]][x]$. It can be written by
\[
P(x,\partial)=\sum_{s}^{\infty}x^{\rho-s}P_{s}(\partial),
\]
where $\rho=\max\{v(a_{i}(x))\mid i=0,\ldots,n\}$ and $P_{s}(x)\in \C[x]$.
Then the followings are equivalent
\begin{enumerate}
\item $P_{i}^{(j)}(0)=\partial^{j}P_{i}(x)|_{x=0}=0\ \text{for}\ i+j< l.$
\item $v(a_{l-i-1}(x))<\rho-i$\ $(i=0,\ldots,l-1)$.
\end{enumerate}
\end{lem}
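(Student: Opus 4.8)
The plan is to reduce both conditions to a single statement about the vanishing of a triangular array of coefficients of the $a_k(x)$, and then to match them by an explicit reindexing. The starting point is the obvious dictionary between the two presentations of $P$: if I write $b_{k,s}$ for the coefficient of $x^{\rho-s}$ in $a_k(x)$, then regrouping $P=\sum_k a_k(x)\partial^k$ by powers of $x$ shows that the homogeneous piece is $P_s(\partial)=\sum_k b_{k,s}\partial^k$, i.e. the coefficient of $\partial^k$ in $P_s$ is exactly $b_{k,s}$.

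First I would dispose of condition (1). Viewing $P_i$ as a polynomial $P_i(x)=\sum_k b_{k,i}x^k$, Taylor's formula gives $P_i^{(j)}(0)=j!\,b_{j,i}$; since $j!\neq 0$, the vanishing $P_i^{(j)}(0)=0$ is equivalent to $b_{j,i}=0$, that is, to the vanishing of the coefficient of $x^{\rho-i}$ in $a_j(x)$. Hence condition (1) is precisely the assertion that $b_{j,i}=0$ for all nonnegative $i,j$ with $i+j<l$. Reading this for a fixed $j\in\{0,\dots,l-1\}$, it says that the coefficients of $x^{\rho},x^{\rho-1},\dots,x^{\rho-(l-j-1)}$ in $a_j(x)$ all vanish, i.e. the top $l-j$ powers of $x$ are absent from $a_j(x)$.

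Next I would rewrite condition (2) in the same language. The valuation $v(a_{l-i-1})$ pins down the leading term of $a_{l-i-1}(x)$ at infinity, and the inequality of (2) relating $v(a_{l-i-1})$ to $\rho-i$ is exactly the statement that $a_{l-i-1}(x)$ carries none of the top $i+1$ powers $x^{\rho},x^{\rho-1},\dots,x^{\rho-i}$. The crux is now the substitution $j=l-i-1$: it identifies the constraint of (2) for the index $i$ with the constraint of (1) for the index $j$, because $l-j=i+1$ makes ``the top $i+1$ powers of $a_{l-i-1}$ vanish'' and ``the top $l-j$ powers of $a_j$ vanish'' literally the same requirement. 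As $i$ runs over $0,\dots,l-1$ the index $j=l-i-1$ runs over $l-1,\dots,0$, so this is a bijection of the two families of constraints.

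Running the correspondence in both directions then yields the equivalence of (1) and (2). I expect no analytic content whatsoever: the whole argument is bookkeeping. The only place that requires care --- and the step I would treat as the real obstacle --- is keeping the two indexing schemes aligned, namely translating the triangular region $\{(i,j):i+j<l\}$ of (1) into the union of ``leading-prefix'' conditions of (2) via $j\mapsto l-i-1$, and checking that the cut-off power of $x$ recorded by $v(a_{l-i-1})$ is exactly the one demanded by the triangle. Once the dictionary $b_{k,s}\leftrightarrow$ (coefficient of $x^{\rho-s}$ in $a_k$) and the reindexing are set up correctly, both implications are immediate.
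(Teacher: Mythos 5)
Your proof is correct and takes essentially the same route as the paper's: the paper simply observes that condition (1) is equivalent to writing $P=x^{\rho}\partial^{l}P'_{0}(\partial)+x^{\rho-1}\partial^{l-1}P'_{1}(\partial)+\cdots$, i.e.\ that each $P_{s}$ with $s<l$ is divisible by $\partial^{l-s}$, and then reads off the degree bounds on the $a_{k}$ --- which is exactly your coefficient array $b_{j,i}=0$ for $i+j<l$ combined with the reindexing $j=l-i-1$. Note also that you adopted the only coherent reading of $v$ (the top power of $x$, so that $v(a_{l-i-1})<\rho-i$ means the coefficients of $x^{\rho},\dots,x^{\rho-i}$ vanish), which is what the paper intends and uses, even though its formal definition of the valuation is sign-inconsistent with this.
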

\begin{proof}
The condition 1 is equivalent to that $P(x,\partial)$ is written by
\[
P(x,\partial)=x^{\rho}\partial^{l}P'_{0}(\partial)+x^{\rho-1}\partial^{l-1}P'_{1}(\partial)+\cdots +x^{\rho-l+1}\partial P'_{l-1}(\partial)+x^{\rho-l}P_{l}(\partial)+\cdots.
\]
Then the assertion is obvious.
\end{proof}
\begin{prop}\label{formal regular solutions}
Let $P(x,\partial)\in\mathcal{K}\langle\partial\rangle$ be 
\[
P(x,\partial)=a_{n}(x)\partial^{n}+a_{n-1}(x)\partial^{n-1}+\ldots +a_{1}(x)\partial+a_{0}(x), 
\] 
for $a_{i}(x)\in \C[[x^{-1}]][x]$. Also we write 
\[
P(x,\partial)=\sum_{s=0}^{\infty}x^{\rho-s}P_{s}(\partial)
\]
where $P_{s}(x)\in \C[x]$ and $\deg P_{s}\le n$ for $s=0,1,2,\ldots.$

For $l\le n$, we assume that there exist an $l$-dimensional space of formal regular series V whose elements are solutions of the differential equation $P(x,\partial)u=0$.
 
Then we have
\[
P_{i}^{(j)}(0)=0\ \text{for}\ i+j<l,
\]
equivalently
\[
v(a_{l-i-1}(x))<\rho-i\ (i=0,\ldots,l-1).
\]
\end{prop}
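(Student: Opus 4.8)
The plan is to recast everything through the Euler operator $\theta=x\partial$, which diagonalizes the leading behaviour of a formal regular series, and then to read off the statement from an indicial (dimension) count. First I would rewrite $P$ in terms of $\theta$: using $\partial^{j}=x^{-j}\theta(\theta-1)\cdots(\theta-j+1)$ and collecting equal powers of $x$, one obtains
\[
P(x,\partial)=\sum_{m\ge 0}x^{\rho-m}\Psi_{m}(\theta),\qquad \Psi_{m}(\theta)=\sum_{j=0}^{m}(P_{m-j})_{j}\,\theta(\theta-1)\cdots(\theta-j+1),
\]
where $(P_{i})_{j}$ is the coefficient of $\partial^{j}$ in $P_{i}$. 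Since the falling factorials of degrees $0,1,2,\dots$ are linearly independent, and $\Psi_{m}$ involves precisely the coefficients $(P_{i})_{j}$ with $i+j=m$, the asserted conclusion $P_{i}^{(j)}(0)=0$ for $i+j<l$ is exactly equivalent to $\Psi_{0}=\Psi_{1}=\cdots=\Psi_{l-1}=0$. Note also that $\deg\Psi_{m}\le m$. (The passage to the valuation form of the conclusion is Lemma \ref{equiv}.)

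Next I would compute the action of $P$ on a formal regular series. On $x^{a}\C[L]$ with $L=\log x^{-1}$ one has $\theta=a-\partial_{L}$, so for a single-exponent piece $u=\sum_{t\ge0}x^{\nu-t}u_{t}(L)$ with $u_{t}\in\C[L]$ and $u_{0}\ne0$, the equation $Pu=0$ becomes, power of $x$ by power of $x$,
\[
\sum_{m+t=N}\Psi_{m}\bigl((\nu-t)-\partial_{L}\bigr)u_{t}(L)=0\qquad(N\ge 0).
\]
For $N=0$ this reads $\Psi_{0}u_{0}=0$, since $\Psi_{0}=P_{0}(0)$ is a constant; as $u_{0}\ne0$ this forces $\Psi_{0}=0$, which is the case $l=1$.

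Now set $m_{0}=\min\{m:\Psi_{m}\ne0\}$; the task is to show $m_{0}\ge l$. Assuming $\Psi_{m}=0$ for $m<m_{0}$, the recursion shows that the top exponent $\nu$ of any formal regular series solution must be a root of $\Psi_{m_{0}}$, with the $L$-degree of its leading coefficient strictly below the multiplicity of that root, and that each subsequent coefficient $u_{k}$ is uniquely determined from the earlier ones unless $\nu-k$ is again a root of $\Psi_{m_{0}}$, where a new free parameter (bounded by the multiplicity) appears and, because $\partial_{L}^{e}$ is surjective on $\C[L]$, no obstruction arises. Since $P$ preserves each exponent coset modulo $\Z$, summing these free parameters over all roots and all cosets bounds the dimension of the whole space of formal regular series solutions by $\deg\Psi_{m_{0}}$. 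As $V$ is such a space, $l=\dim V\le\deg\Psi_{m_{0}}\le m_{0}$, whence $m_{0}\ge l$, i.e. $\Psi_{m}=0$ for $m<l$, as required.

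The main obstacle is this last dimension count: one must verify carefully that $\Psi_{m_{0}}$ is genuinely the indicial polynomial governing the logarithmic formal solutions, so that each resonance contributes exactly its multiplicity and no further obstruction. This is the point at which the precise shape of the formal regular series in Definition \ref{normal} and the reduction $\Phi_{i}$ of Lemma \ref{regular series} do their work. Alternatively one can avoid the direct count by inducting on $l$: dividing by the leading solution $f(\mu_{1};0)$ and differentiating (the map $\Phi_{1}$) passes to an operator $B$ of order $n-1$ whose lowest index, after conjugating by $g=f(\mu_{1};0)$ in the variable $\theta$ and factoring off one power of $\partial$, satisfies $m_{0}^{B}=m_{0}-1$; applying the inductive hypothesis to the $(l-1)$-dimensional space $\Phi_{1}(V)$ of solutions of $B$ then gives $m_{0}-1\ge l-1$. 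The bookkeeping of this conjugation-and-division is the analogue of the obstacle in the direct route.
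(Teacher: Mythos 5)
Your proposal is correct, and your primary argument takes a genuinely different route from the paper's. The paper proves the proposition by induction on $l$: it picks a log-free solution $\phi=x^{\mu}h(x)$ in $V$, writes $\phi^{-1}P\phi=Q\partial$, invokes Lemma \ref{regular series} to see that $\Phi(V)$ is an $(l-1)$-dimensional space of formal regular series annihilated by $Q$, applies the inductive hypothesis to $Q$, and then transfers the vanishing of coefficients back to $P$ through valuation estimates coming from the Leibniz rule --- this is precisely the ``alternative'' inductive route you sketch in your final paragraph (note, though, that what the induction actually needs is the implication from the vanishing for $Q$ to the vanishing for $P$, i.e.\ $m_{0}^{P}\ge m_{0}^{B}+1$, which is the direction the paper's Leibniz computation supplies). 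Your main argument instead works directly: the conclusion is equivalent to $\Psi_{0}=\cdots=\Psi_{l-1}=0$ by independence of the falling factorials --- and indeed $\Psi_{m}(\lambda)=p_{m}(\lambda-m,0)$, the paper's characteristic polynomial from Definition \ref{characteristic equation at infinity} --- and one then bounds the dimension of the whole space of formal regular series solutions by $\deg\Psi_{m_{0}}\le m_{0}$. This is sound; the only point needing tightening is the informal phrase ``summing these free parameters \dots bounds the dimension'', which is rigorized by a filtration rather than a parameter count: writing $L=\log x^{-1}$, for the solution space $S$ of a fixed coset, padded to a common top exponent $\nu$ (the largest root of $\Psi_{m_{0}}$ in that coset), set $S_{t}=\{u\in S\mid u_{0}=\cdots=u_{t-1}=0\}$; the equation at level $t+m_{0}$ shows that $u\mapsto u_{t}$ embeds $S_{t}/S_{t+1}$ into $\ker\Psi_{m_{0}}\bigl((\nu-t)-\partial_{L}\bigr)$, whose dimension is the multiplicity of $\nu-t$ as a root of $\Psi_{m_{0}}$ (zero if it is not a root), and $S_{T+1}=0$ once $T$ passes the last resonance, so $\dim S\le\deg\Psi_{m_{0}}$; summing over cosets, which is legitimate because $\rho\in\Z$ so $P$ maps each coset into itself, gives $l\le\deg\Psi_{m_{0}}\le m_{0}$. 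What your route buys: it bypasses Lemma \ref{regular series} and the valuation bookkeeping entirely, and it proves more than the statement --- it exhibits $\Psi_{m_{0}}$ as the indicial polynomial and bounds the full solution space, essentially recovering Lemma \ref{frobenious method} as a by-product. What the paper's route buys: each inductive step involves only one division and one differentiation, with all the combinatorial structure isolated in Lemma \ref{regular series}.
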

\begin{proof}
We prove this by induction on $l$. We assume that $P(x,\partial)$ has a formal solution $x^{\mu}h(x)$ for $\nu\in\C$ and $h(x)\in \C[[x^{-1}]]\ (h(\infty)\neq 0)$. Then the coefficient of $x^{\mu}$ of $P(x,\partial)x^{\mu}h(x)$, is $P_{0}(0)h(\infty).$ Hence  $P_{0}(0)=0.$

We assume that there exists a $k$-dimensional space $V$ of  formal regular series whose elements are solutions of  $P(x,\partial)u=0$. We take an element of $V$ without $\log$ term, i.e., an element $\phi(x)=x^{\mu}h(x)$ for $\mu\in\C$ and $h(x)\in \C[[x^{-1}]]\ (h(\infty)\neq 0)$. We consider a differential operetor $\tilde{P}(x,\partial)=\phi(x)^{-1}P(x,\partial)\phi(x)$. Then there exits $Q(x,\partial)\in\mathcal{K}\langle\partial\rangle$ such that $\tilde{P}(x,\partial)=Q(x,\partial)\partial$ because $1=\phi(x)^{-1}\phi(x)$ is a solution of $\tilde{P}(x,\partial)u=0$. Hence if we consider a $\C$-linear map
\[
\begin{array}{cccc}
\Phi\colon &V&\longrightarrow &\C[x^{\mu}]\otimes_{\C}\C[[x^{-1}]][\log x^{-1}]\\
&f(x)&\mapsto &\partial\frac{f(x)}{\phi(x)},\\
\end{array}
\]
elements in the image $\Phi(V)$ are formal solutions of $Q(x,\partial)$. By Lemma \ref{regular series}, $\Phi(V)$ is a $k-1$-dimensional space of formal regular series and elements of $\Phi(V)$ are formal solutions of $Q(x,\partial)u=0$ by the construction. Hence by the hypothesis of the induction we have
\[
Q_{i}^{j}(0)=0\ \text{for}\ i+j<k-1.
\]
If we write 
\[
Q(x,\partial)=\sum_{i=0}^{n-1}q_{i}(x)\partial^{i}
\]
for $q_{k}(x)\in\C[[x^{-1}]][x]\ (k=0,\ldots,n-1)$, the above condition is equivalent to
\begin{equation}\label{q-eq}
v(q_{k-i-2}(x))<\rho-i\ (i=0,\ldots,k-2)
\end{equation}
by Lemma \ref{equiv}. Also we can write
\[
P(x,\partial)=\sum_{i=0}^{n}p_{i}(x)\partial^{i}
\]
for $p_{i}(x)\in\C[[x^{-1}]][x]\ (i=0,\ldots,n)$. If we recall the Leibniz rule,\[
\partial^{i} f(x)g(x)=\sum_{j=0}^{i} \begin{pmatrix}i\\j\end{pmatrix}f^{(j)}(x)\partial^{i-j}g(x),
\]
the equation $\phi(x)P(x,\partial)\phi(x)^{-1}=Q(x,\partial)\partial$ says that\[
p_{n-i}(x)=\sum_{j=0}^{i}\begin{pmatrix}n-j\\i-j\end{pmatrix}\frac{\phi^{(i-j)}(x)}{\phi(x)}q_{n-j-1}(x)\ (i=0,\ldots,n).
\]
Here we put $q_{-1}(x)=0$. Since $v(\frac{\phi^{(i)}}{\phi(x)})\le-i$, valuations of $p_{i}(x)$ are bounded as follows,
\[
v(p_{i}(x))\le \max\{v(q_{j-1})-(j-i)\mid j=i,\ldots,n\}.
\] 
If we notice that 
\[
v(p_{i+1}(x))\le \max\{v(q_{j-1})-(j-i)+1\mid j=i+1,\ldots,n\},
\] 
then we have
\[
v(p_{i}(x))\le\max\{v(p_{i+1}(x))-1,v(q_{i-1}(x))\} \ (i=1,\ldots,n).
\]
Then above inequalities and $(\ref{q-eq})$ implies that
\begin{equation}
v(p_{k-i-1}(x))<\rho-i\ (i=0,\ldots,k-1).
\end{equation}

\end{proof}

\begin{df}[The characteristic equation]\label{characteristic equation at infinity}
Let us take an element of $\mathcal{K}\langle\partial\rangle$, 
\[
P(x,\partial)=\sum_{s=0}^{\infty}x^{\rho-s}P_{s}(\partial)
\]
where $P_{s}(x)\in \C[x]$ and $\deg P_{s}\le n$ for $s=0,1,2,\ldots.$
We consider following polynomials of  $\lambda$,
\[
p_{k}(\lambda,x)=\sum_{i=0}^{k}(\lambda+i+1)_{k-i}\frac{1}{(k-i)!}P_{i}^{(k-i)}(x)
\]
for $k=1,\ldots,n$. Here we put $(\lambda)_{l}=\lambda(\lambda+1)\cdots(\lambda+(l-1))$ for $l=1,2,\ldots$ and $(\lambda)_{0}=1$. If $P_{0}^{(k)}(0)\neq 0$, then $p_{k}(\lambda,0)$ is a polynomial of $\lambda$ of degree $k$. 
Hence if $P_{0}^{(m)}(0)\neq 0$ and $P^{(i)}_{j}(0)=0$ for $i+j<m$, we call 
\[
p_{m}(\lambda-m,0)=0
\]
 the characteristic equation of $P(x,\partial)$.
\end{df}

\begin{lem}\label{frobenious method}
Let us take $P(x,\partial)$ as in Definition \ref{characteristic equation at infinity}.
Suppose that $P_{i}^{(j)}(0)=0$ for $i+j<m$ and $P_{0}^{(m)}(0)\neq 0$ for an $m\le n$. 

Also we assume that there exist $\mu_{1},\ldots \mu_{r}\in \C,\ \text{such that}\ \mu_{i}-\mu_{j}\notin\Z \ \text{for}\ i\neq j,$ increasing sequences of positive integers $0=n_{0}^{i}<\cdots<n_{l_{i}}^{i}$ for $\ i=1,\ldots,r$ and $m^{i}_{j}\in\Z_{>0}\ \text{for}\ i=1,\ldots,r,\,j=0,\ldots,l_{i}$ such that $m=\sum_{i=1}^{r}\sum_{j=0}^{l_{i}}m^{i}_{j}$. Also assume that  solutions of the characteristic equation 
\[
p_{m}(\lambda-m,0)=0
\]
are $\mu_{i}+n^{i}_{j}$ with multiplicities $m^{i}_{j}$ for $i=1,\ldots,r$, $j=0,\ldots,l_{i}$.

Then there exist following $m$ formal solutions of $P(x,\partial)u=0$,
\[
f(\mu_{i}+n^{i}_{j};k)(x)=x^{\mu_{i}+n^{i}_{j}}\sum_{l=0}^{h^{i}_{j}+k}\begin{pmatrix}h^{i}_{j}+k\\ l\end{pmatrix}c^{ij}_{h^{i}_{j}+k-l}(x)(\log x)^{l},
\]
where $c^{ij}_{k}(x)\in \C[[x^{-1}]]$, $c_{0}^{i0}(\infty)\neq 0$ and $h^{i}_{j}=\sum_{k=1}^{j}m^{i}_{k}$.
\end{lem}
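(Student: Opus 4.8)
The plan is to run the Frobenius method directly, treating the leading exponent $\nu$ as a free parameter and manufacturing the logarithmic terms by differentiating in $\nu$. First I would record the action of $P$ on a single power $x^\nu$. A direct computation from Definition \ref{characteristic equation at infinity} shows that $p_k(\nu-k,0)$ is exactly the coefficient of $x^{\rho+\nu-k}$ in $P(x,\partial)x^\nu$; the hypothesis $P_i^{(j)}(0)=0$ for $i+j<m$ forces $p_k(\nu-k,0)=0$ for every $k<m$, so that
\[
P(x,\partial)x^\nu=\sum_{k\ge m}p_k(\nu-k,0)\,x^{\rho+\nu-k}.
\]
Writing $I(\nu):=p_m(\nu-m,0)$ for the indicial polynomial — of degree $m$ in $\nu$ since $P_0^{(m)}(0)\neq0$, and with roots the $\mu_i+n^i_j$ of multiplicity $m^i_j$ by hypothesis — I would seek a formal series $u(\nu,x)=x^{\nu}\sum_{s\ge0}c_s(\nu)x^{-s}$ with $c_0(\nu)=1$ and derive the recursion
\[
c_r(\nu)\,I(\nu-r)=-\sum_{s=0}^{r-1}c_s(\nu)\,p_{m+r-s}(\nu-m-r,0),
\]
so that $P(x,\partial)u(\nu,x)=I(\nu)\,x^{\rho+\nu-m}$. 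This determines each $c_r(\nu)$ as a rational function of $\nu$ whose only poles occur where $\nu-r$ is a root of $I$, i.e. an exponent.

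Next I would treat the classes separately. Since $\mu_i-\mu_{i'}\notin\Z$ for $i\neq i'$, a denominator $I(\nu-r)$ can never vanish for $\nu$ near some $\mu_i+n^i_j$ because of an exponent from another class, so the construction decouples class by class and it suffices to produce $h^i_{l_i}$ solutions inside each fixed class. For the bottom exponent $\nu=\mu_i$ (where $n^i_0=0$) there is no resonance from below: $\mu_i-r$ is never an exponent for $r\ge1$, every $c_r(\mu_i)$ is finite, and $u(\mu_i,x)$ is a genuine power-series solution $f(\mu_i;0)(x)$ with leading coefficient $c^{i0}_0(\infty)=1\neq0$.

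The logarithms, and with them the binomial shape of the $f(\mu_i+n^i_j;k)$, come from differentiating in $\nu$. Because $P$ contains no $\nu$ and $\partial_\nu^l x^\nu=x^\nu(\log x)^l$, one has $P\,\partial_\nu^p u(\nu,x)=\partial_\nu^p\bigl(I(\nu)x^{\rho+\nu-m}\bigr)$, while the Leibniz rule $\partial_\nu^p\bigl(x^\nu c(\nu,x)\bigr)=\sum_l\binom{p}{l}x^\nu(\log x)^l\,\partial_\nu^{p-l}c(\nu,x)$ reproduces exactly the coefficients $\binom{h^i_j+k}{l}$, with the $c^{ij}$ identified as $\nu$-derivatives of $c(\nu,x)=\sum_s c_s(\nu)x^{-s}$. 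To realize an exponent $\mu_i+n^i_j$ of multiplicity $m^i_j$ I would first clear the poles of the $c_s(\nu)$ near $\nu=\mu_i+n^i_j$ by renormalizing $c_0(\nu)$ by a suitable product of factors $I(\nu-r)$ coming from the lower exponents $\mu_i+n^i_{j'}$ with $j'<j$; the renormalized series $\bar u(\nu,x)$ is then regular in $\nu$ and satisfies $P\bar u(\nu,x)=(\text{polynomial in }\nu)\cdot I(\nu)\,x^{\rho+\nu-m}$, whose right-hand side vanishes at $\nu=\mu_i+n^i_j$ to an order high enough that $\partial_\nu^{h^i_j+k}\bar u(\nu,x)\big|_{\nu=\mu_i+n^i_j}$ is a solution for each $k=0,\dots,m^i_j-1$. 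These are the asserted $f(\mu_i+n^i_j;k)(x)$.

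I expect the main obstacle to be the bookkeeping in this last step: one must verify that the order of vanishing of $I$ at $\mu_i+n^i_j$, combined with the factors introduced to cancel the poles contributed by all lower exponents in the same class, is precisely $h^i_j+k$, so that exactly the stated logarithmic degree appears, the leading coefficients stay nonzero, and the $m^i_j$ solutions obtained for fixed $j$ (as $k$ varies) are linearly independent. An alternative that fits the machinery already developed is induction on $m$ by reduction of order: take the non-resonant seed $\phi=f(\mu_i;0)$, factor $\phi^{-1}P\phi=Q\partial$ exactly as in the proof of Proposition \ref{formal regular solutions}, use Lemma \ref{regular series} to read off the indicial data of $Q$, apply the inductive hypothesis, and lift the solutions of $Q$ back by formal integration followed by multiplication by $\phi$. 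In that route the same difficulty resurfaces at the point where integrating a term $x^{-1}(\log x)^l$ raises the logarithmic degree, which is precisely the analytic fingerprint of a resonance.
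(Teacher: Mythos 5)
Your proposal is correct and follows essentially the same route as the paper: the paper's proof consists of deriving exactly your coefficient recursion $\sum_{k}c_{i-k}p_{k}(\mu-i,0)=0$, observing that $p_{k}(\lambda,0)\equiv 0$ for $k<m$ while $p_{m}(\lambda,0)$ is a degree-$m$ polynomial with the prescribed roots, and then invoking the classical Frobenius method as a black box. What you have written is precisely the content of that citation — the parameter-differentiation construction $\partial_{\nu}^{p}u(\nu,x)$ with pole-clearing renormalization at resonant roots — so your argument unpacks the step the paper leaves to the reference rather than diverging from it.
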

\begin{proof}
Suppose that $x^{\mu}\sum_{s=0}^{\infty}c_{x}x^{-s}$ $(c_{0}\neq 0)$ is
 a formal solution of $P(x,\partial) u=\sum_{s=0}^{\infty}x^{\rho-s}P_{s}(\partial)u=0$. Then we have the following equations,
\begin{equation}\label{char}
\sum_{k=0}^{i}c_{i-k}p_{k}(\mu-i,0)=0,
\end{equation}
for $i=0,1,\ldots$. The assumption implies $p_{k}(\lambda,0)$ for $k<m$ are identically zero and  $p_{m}(\lambda,0)$ is nonzero polynomial of $\lambda$ of degree $m$. Hence this is a direct consequence of the Frobenius method.
\end{proof}

\begin{prop}\label{strictly normal}
Let us take $P(x,\partial)\in \mathcal{K}\langle\partial\rangle$ as in Definition \ref{characteristic equation at infinity}. 
Suppose that $P_{i}^{(j)}(0)=0$ for $i+j<m$ and $P_{0}^{(m)}(0)\neq0$. We take $\mu_{1},\ldots,\mu_{r}\in \C$ such that $\mu_{i}-\mu_{j}\notin\Z$ $(i\neq j)$. Then the followings are equivalent,
\begin{enumerate}
\item There exist $m$ formal series,
\[
f_{ij}(x)=x^{-\mu_{i}-j}+x^{-\mu_{i}-m_{i}}h_{ij}(x)
\]
where $h_{ij}(x)\in
 \C[[x^{-1}]]$ for $i=1,\ldots,l$ and $j=0,\ldots,m_{i}-1$ and these are
      solutions of $P(x,\partial)u=0$.
\item
\[p_{k}(-\mu_{i}-j-k;0)=0
\]
 for 
\begin{gather*}i=1,\ldots,r,\\
j=0,\ldots,m_{i}-1,\\
k=m,m+1,\ldots,m+m_{i}-j-1.
\end{gather*}
\end{enumerate}
\end{prop}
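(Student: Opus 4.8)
The plan is to reduce both implications to the Frobenius recurrence obtained in the proof of Lemma~\ref{frobenious method}. Recall that if $x^{\mu}\sum_{s\ge 0}c_{s}x^{-s}$ with $c_{0}\ne 0$ is a formal solution of $P(x,\partial)u=0$, then
\[
\sum_{k=0}^{t}c_{t-k}\,p_{k}(\mu-t,0)=0\qquad(t=0,1,2,\dots).
\]
The hypotheses $P_{i}^{(j)}(0)=0$ for $i+j<m$ and $P_{0}^{(m)}(0)\ne 0$ say that $p_{k}(\lambda,0)\equiv 0$ for $k<m$ while $p_{m}(\lambda,0)$ has degree $m$ in $\lambda$. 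Thus the instances $t<m$ are vacuous, and putting $t=m+s$ the recurrence reads
\[
c_{s}\,p_{m}(\mu-m-s,0)=-\sum_{\ell=1}^{s}c_{s-\ell}\,p_{m+\ell}(\mu-m-s,0)\qquad(s\ge 0).
\]
So $c_{s}$ is uniquely determined by $c_{0},\dots,c_{s-1}$ unless $p_{m}(\mu-m-s,0)=0$, that is, unless $\mu-s$ is again a root of the characteristic equation $p_{m}(\lambda-m,0)=0$.

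Before treating the two directions I would record the resonance pattern. The $k=m$ part of condition (2) makes every $-\mu_{i}-j$ a root of the degree-$m$ characteristic equation; since there are $\sum_{i}m_{i}=m$ of them and they are pairwise distinct (using $\mu_{i}-\mu_{i'}\notin\Z$ for $i\ne i'$), these are exactly all the roots. Consequently, for the leading exponent $\mu=-\mu_{i}-j$, the step $s$ is resonant, $p_{m}(\mu-m-s,0)=0$, precisely when $-\mu_{i}-(j+s)$ is a root, i.e.\ exactly when $1\le s\le m_{i}-j-1$; no resonance occurs for $s\ge m_{i}-j$, and the integrality condition forbids resonances leaking between distinct classes $i$.

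For (1)$\Rightarrow$(2) I would substitute each $f_{ij}$ into the recurrence. Its prescribed shape gives $\mu=-\mu_{i}-j$, $c_{0}=1$ and $c_{1}=\dots=c_{m_{i}-j-1}=0$. At step $s$ with $0\le s\le m_{i}-j-1$ all intermediate coefficients vanish, so the recurrence collapses to $c_{0}\,p_{m+s}(-\mu_{i}-j-m-s,0)=0$, yielding $p_{m+s}(-\mu_{i}-j-m-s,0)=0$; with $k=m+s$ this is exactly condition (2), the case $s=0$ being the characteristic-root condition $k=m$. For (2)$\Rightarrow$(1) I would run the recurrence forward. Starting from $\mu=-\mu_{i}-j$ and $c_{0}=1$, and assuming inductively $c_{1}=\dots=c_{s-1}=0$ for $1\le s\le m_{i}-j-1$: by the resonance pattern step $s$ is resonant, so the recurrence imposes no condition on $c_{s}$ but demands $c_{0}\,p_{m+s}(\mu-m-s,0)=0$, which holds by condition (2); hence I may take $c_{s}=0$. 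For $s\ge m_{i}-j$ there is no resonance, so every remaining $c_{s}$ is uniquely determined; collecting these produces a tail $h_{ij}(x)\in\C[[x^{-1}]]$ and a genuine solution $f_{ij}(x)=x^{-\mu_{i}-j}+x^{-\mu_{i}-m_{i}}h_{ij}(x)$ of the required form.

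The main obstacle, and the crux of the argument, is the resonance bookkeeping of the second paragraph: one must confine the resonant steps to $1\le s\le m_{i}-j-1$, where condition (2) supplies exactly the consistency relations needed to keep the chosen coefficients zero, and rule out any resonance for $s\ge m_{i}-j$, so that the lower tail is unobstructed. This hinges on identifying all $m$ roots of the characteristic polynomial with the distinct string values $-\mu_{i}-j$ and on $\mu_{i}-\mu_{i'}\notin\Z$, which keeps the integer-strings of exponents from interfering.
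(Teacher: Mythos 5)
Your proof is correct and takes essentially the same approach as the paper's: both directions rest on the Frobenius recurrence $\sum_{k}c_{t-k}\,p_{k}(\mu-t,0)=0$ of Lemma \ref{frobenious method}, with (1)$\Rightarrow$(2) obtained by substituting the prescribed coefficients $c_{0}=1$, $c_{1}=\cdots=c_{m_{i}-j-1}=0$ so the recurrence collapses to $c_{0}p_{m+s}(-\mu_{i}-j-m-s,0)=0$, and (2)$\Rightarrow$(1) by constructing the tail coefficients inductively. Your explicit resonance bookkeeping---identifying the $m$ distinct values $-\mu_{i}-j$ as \emph{all} roots of the degree-$m$ characteristic polynomial, so that no resonance occurs for $s\ge m_{i}-j$---is in fact a point the paper's proof leaves implicit when it asserts that the remaining coefficients ``are determined inductively,'' so your write-up supplies that missing justification rather than deviating from the paper's method.
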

\begin{proof}
First we assume that 1 is true. We consider the equation 
\[
P(x,\partial)x^{-\mu_{i}-j}(c_{0}+\sum_{s=m_{i}-j}^{\infty}c_{s}x^{-s})=0
\]
where $i\in\{1,\ldots,r\}$, $j\in\{0,\ldots,m_{i}-1\}$ and $c_{0}\neq 0$. Since $c_{s}=0$ for $1<s<m_{i}-j$, the equation $(\ref{char})$ tells us that
\[
c_{0}p_{k}(-\mu_{i}-j-k;0)=0,
\]
 for  $m\le k<m_{i}-j+m$. If the condition 2 is false, $p_{k}(-\mu_{i}-j-k;0)\neq 0$ for some $k$. This contradicts $c_{0}\neq 0$. 
 
Conversely we assume that 2 is true. Let us consider the equation $P(x,\partial)x^{-\mu_{i}-j}(c_{0}+\sum_{s=m_{i}-j}^{\infty}c_{s}x^{-s})=0$. Then the equation $(\ref{char})$ implies
\[
c_{0}p_{k}(-\mu_{i}-j-k;0)=0,
\]
for  $n\le k<m_{i}-j+n$ and 
\begin{multline*}
\sum_{l=0}^{k} c_{m_{i}-j+l}p_{m+(k-l)}(-\mu_{i}-m_{i}-k-m,0)\\
+c_{0}p_{m_{i}-j+m+k}(-\mu_{i}-m_{i}-k-m,0)=0
\end{multline*}
for $k=0,1,\ldots$. By the assumption 2, we can choose $c_{0}\neq 0$. Then if $c_{0}$ is determined, other coefficients $c_{s}$ for $s=m_{i}-j,m_{i}-j+1,\ldots$ are determined inductively. We can put $c_{0}=1$. Then we can choose $x^{-\mu_{i}-j}(1+\sum_{s=m_{i}-j}^{\infty}c_{s}x^{-s})$ as a formal solution of $P(x,\partial)f(x)=0$ for all $i\in\{1,\ldots,r\}$, $j\in\{0,\ldots,m_{i}-1\}$.
\end{proof}
\begin{df}[Semi-simple characteristic exponents]
Let us take a differential operator
\[
P(x,\partial)=\sum_{s=0}^{\infty}x^{\rho-s}P_{s}(\partial)
\]
where $P_{s}(x)\in \C[x]$ and $\deg P_{s}\le n$ for $s=0,1,2,\ldots.$
 Suppose that $P_{i}^{(j)}(0)=0$ for $i+j<m$ and
 $P_{0}^{(m)}(0)\neq0$. Also suppose that  there exit $\mu_{1},\ldots,\mu_{r}\in \C$
 such that $\mu_{i}-\mu_{j}\notin\Z$ $(i\neq j)$ and these satisfy 
\[p_{k}(-\mu_{i}-j-k,0)=0
\]
 for 
\begin{gather*}i=1,\ldots,r,\\
j=0,\ldots,m_{i}-1,\\
k=m,m+1,\ldots,m+m_{i}-j-1.
\end{gather*} Here $m=\sum_{i=1}^{r}m_{i}$. Then we say that $P(x,\partial)$ has semi-simple characteristic exponents
\[
\{\mu_{1},\mu_{1}+1,\ldots,\mu_{1}+m_{1}-1,\ldots,\mu_{r},\mu_{r}+1,\ldots,\mu_{r}+m_{r}-1\},
\] 
at $x=\infty$.
By using the notation $[\mu]_{m}=\{\mu,\mu+1,\ldots\mu_{m-1}\}$, we write
\begin{multline*}
\{[\mu_{1}]_{m_{1}},\ldots,[\mu_{r}]_{m_{r}}\}\\
=\{\mu_{1},\mu_{1}+1,\ldots,\mu_{1}+m_{1}-1,\ldots,\mu_{r},\mu_{r}+1,\ldots,\mu_{r}+m_{r}-1\}
\end{multline*}
shortly.
\end{df}
Let us recall
that 
\[
 e^{-p(x)}\partial e^{p(x)}=\partial+p'(x).
\]
For a $P(x,\partial)\in \mathcal{K}\langle\partial\rangle$, the differential equation
$P(x,\partial)u=0$ has a formal solution 
\[
 e^{p(x)}x^{-\nu}\sum_{i=0}c_{s}x^{-s}
\]
if and only if 
the differential equation $P(x,\partial+p'(x))$ has a formal solution
\[
 x^{-\nu}\sum_{i=0}c_{s}x^{-s}.
\]
\begin{df}[$e^{p(x)}$-twisted semi-simple characteristic  exponents]
For $P(x,\partial)\in \mathcal{K}\langle\partial\rangle$ and $p(x)\in\C[x]$, we say that the differential equation
 $P(x,\partial)u=0$ has $e^{p(x)}$-twisted  semi-simple exponents
\[
 \{[\mu_{1}]_{m_{1}},\ldots,[\mu_{l}]_{m_{l}}\}
\]
at $x=\infty$ where $\mu_{i}\in \C$, $m_{i}\in \N$ and $m=\sum_{i=1}^{l}m_{i}$, if 
the differential equation $P(x,\partial+p'(x))$ has the same semi-simple
 exponents at $x=\infty$.
\end{df}
\begin{prop}\label{addition and exponents at infinity}
Suppose that  $P(x,\partial)\in W[x]$ has  $e^{p(x)}$-twisted semi-simple exponents
\[
 \{[\mu_{1}]_{m_{1}},\ldots,[\mu_{l}]_{m_{l}}\}
\]
at $x=\infty$ where $\mu_{i}\in \C$, $m_{i}\in \N$ and $m=\sum_{i=1}^{l}m_{i}$.
\begin{enumerate}
\item For $\alpha,\nu\in\C$, the differential equation
      $\mathrm{Ad}((x-\alpha)^{\nu})P(x,\partial)u(x)=0$ has $e^{p(x)}$-twisted semi-simple exponents
\[
 \{[\mu_{1}-\nu]_{m_{1}},\ldots,[\mu_{l}-\nu]_{m_{l}}\}
\]
at $x=\infty$.
\item For $q(x)\in\C$, the differential equation
      $\mathrm{Ade}(q(x))P(x,\partial)$ has 
      $e^{p(x)+q(x)}$-twisted semi-simple exponents
\[
 \{[\mu_{1}]_{m_{1}},\ldots,[\mu_{l}]_{m_{l}}\}
\]
at $x=\infty$.
\end{enumerate}
\end{prop}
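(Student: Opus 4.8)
The plan is to reduce both assertions to a single untwisted statement, using that every operation in sight is of the common form $\mathrm{Adei}(h)$ and that, directly from the definition of $\mathrm{Adei}$, one has $\mathrm{Adei}(h_1)\mathrm{Adei}(h_2)=\mathrm{Adei}(h_1+h_2)$; in particular all these maps commute. Indeed $\mathrm{Ad}((x-\alpha)^\nu)=\mathrm{Adei}\!\left(\frac{\nu}{x-\alpha}\right)$, $\mathrm{Ade}(q)=\mathrm{Adei}(q')$, and the untwisting $R\mapsto R(x,\partial+r'(x))$ coincides with $\mathrm{Ade}(-r)=\mathrm{Adei}(-r')$. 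Set $\tilde P=\mathrm{Ade}(-p)P=P(x,\partial+p'(x))$. By the definition of $e^{p(x)}$-twisted semi-simple exponents, the hypothesis says exactly that $\tilde P$ carries ordinary semi-simple exponents $\{[\mu_1]_{m_1},\ldots,[\mu_l]_{m_l}\}$ at $x=\infty$, and each conclusion is, again by that definition, a claim about the ordinary semi-simple exponents of a suitable untwisting of the transformed operator.

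Assertion 2 then follows immediately. Its conclusion unwinds to the statement that $\mathrm{Ade}(-(p+q))\bigl(\mathrm{Ade}(q)P\bigr)$ has ordinary semi-simple exponents $\{[\mu_i]_{m_i}\}$; but $\mathrm{Ade}(-(p+q))\mathrm{Ade}(q)=\mathrm{Adei}(-(p'+q'))\mathrm{Adei}(q')=\mathrm{Adei}(-p')=\mathrm{Ade}(-p)$, so this operator is nothing but $\tilde P$, which has the required exponents by hypothesis. For assertion 1, commutativity gives $\mathrm{Ade}(-p)\,\mathrm{Ad}((x-\alpha)^\nu)=\mathrm{Ad}((x-\alpha)^\nu)\,\mathrm{Ade}(-p)$, so the operator whose ordinary exponents must be computed is $\mathrm{Ad}((x-\alpha)^\nu)\tilde P=(x-\alpha)^\nu\,\tilde P\,(x-\alpha)^{-\nu}$. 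Thus everything reduces to the untwisted claim: if $\tilde P$ has ordinary semi-simple exponents $\{[\mu_i]_{m_i}\}$, then $(x-\alpha)^\nu\tilde P(x-\alpha)^{-\nu}$ has ordinary semi-simple exponents $\{[\mu_i-\nu]_{m_i}\}$.

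To prove this I would pass to formal solutions. By Proposition \ref{strictly normal} the hypothesis yields $m$ log-free formal solutions $f_{ij}(x)=x^{-\mu_i-j}+x^{-\mu_i-m_i}h_{ij}(x)$ $(1\le i\le l,\ 0\le j\le m_i-1)$ of $\tilde P u=0$. Because $(x-\alpha)^\nu\tilde P(x-\alpha)^{-\nu}\bigl((x-\alpha)^\nu f_{ij}\bigr)=(x-\alpha)^\nu\tilde P f_{ij}=0$, the functions $u_{ij}=(x-\alpha)^\nu f_{ij}$ are log-free formal solutions of the transformed equation. Writing $(x-\alpha)^\nu=x^\nu(1-\alpha x^{-1})^\nu$ as $x^\nu$ times a unit of $\C[[x^{-1}]]$ shows that $u_{ij}$ has leading term $x^{-(\mu_i-\nu)-j}$ and that all its exponents lie in the single residue class of $\mu_i-\nu$ modulo $\Z$; since $\mu_i-\mu_{i'}\notin\Z$ for $i\neq i'$, distinct blocks occupy distinct residue classes and are merely translated by $-\nu$. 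Hence the $u_{ij}$ span an $m$-dimensional log-free space of formal regular series in the sense of Definition \ref{normal}, with exponents $\{(\mu_i-\nu)+j\}$; moreover, conjugation by $(x-\alpha)^\nu$ is a bijection on formal solutions that leaves the exponential parts untouched, so the space of formal regular series solutions stays exactly $m$-dimensional and the transformed operator still satisfies the normalization hypotheses of Proposition \ref{strictly normal}.

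The one substantive point, which I expect to be the main obstacle, is that the $u_{ij}$ are \emph{not} themselves of the gap form $x^{-(\mu_i-\nu)-j}+x^{-(\mu_i-\nu)-m_i}(\cdots)$ demanded by Proposition \ref{strictly normal}: multiplication by the unit $(1-\alpha x^{-1})^\nu$ fills in the intermediate exponents $(\mu_i-\nu)+j+1,\ldots,(\mu_i-\nu)+m_i-1$. I would restore the gap form by a triangular linear-algebra argument carried out separately in each residue block $i$. Reading off the coefficients at the window exponents $(\mu_i-\nu)+0,\ldots,(\mu_i-\nu)+(m_i-1)$ defines a linear map $\mathrm{span}\{u_{i0},\ldots,u_{i,m_i-1}\}\to\C^{m_i}$ whose matrix is lower triangular with unit diagonal, since $u_{ij}$ begins at the $j$-th window exponent with coefficient $1$; it is therefore invertible, and the preimages of the standard basis vectors are solutions $g_{ij}=x^{-(\mu_i-\nu)-j}+x^{-(\mu_i-\nu)-m_i}h'_{ij}(x)$ of exactly the required shape. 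These $m$ solutions then force, via Proposition \ref{strictly normal}, that $(x-\alpha)^\nu\tilde P(x-\alpha)^{-\nu}$ has ordinary semi-simple exponents $\{[\mu_i-\nu]_{m_i}\}$, which completes assertion 1. Apart from this gap-restoration, everything is formal bookkeeping within the $\mathrm{Adei}$ calculus.
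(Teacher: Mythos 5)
Your proof is correct and follows essentially the same route as the paper's: the paper also disposes of assertion 2 via $\mathrm{Ade}(q)P=\exp(q)P\exp(-q)$, and proves assertion 1 by expanding $(x-\alpha)^{\nu}$ as $x^{\nu}$ times a unit of $\C[[x^{-1}]]$ and multiplying formal solutions, exactly as you do (the paper simply cites ``the same argument as Proposition \ref{addition and exponents}''). Your version is more careful than the paper's two-line proof, since you make the $\mathrm{Adei}$-commutativity reduction explicit and supply the triangular-combination step restoring the gap form required by Proposition \ref{strictly normal}, a point the paper passes over in silence.
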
 
\begin{proof}
If we recall that $(x-\alpha)^{\nu}$ can be written as
 $(x-\alpha)^{\nu}=x^{-\nu}\sum_{i=0}^{\infty}c_{i}x^{-i}$, the first
 assertion follows from the same argument as Proposition \ref{addition
 and exponents}. The second assertion easily follows from
\[
 \mathrm{Ade}(q(x))P(x,\partial)=\exp(q(x))P(x,\partial)\exp(-q(x)).
\] 
\end{proof}
\subsection{A review of regular singularity}
Let us take a differential operator $P(x,\partial)\in W[x]$. If
$P(x,\partial)$ has a regular singular point  at $x=c\in \C$, then we can write
\[
P(x,\partial)=\sum_{i=0}^n(x-c)^{n-i}a_{i}(x)\partial^{n-i}
\]
where $a_{0}(c)\neq 0$.  We consider a polynomial of $\nu$
\[
f_{c}(x,\nu)=\sum_{j=0}^{n}\frac{a_{j}(x)}{a_{0}(x)}\nu(\nu-1)\cdots(\nu-(n-j)+1).
\] 
Since $x=c$ is a regular singular point of $P(x,\partial)$, $f_{c}(x,\nu)$ is holomorphic at $x=c$. Hence we have the Taylor expansion
\begin{equation}\label{characteristic polynomial regular}
f_{c}(x,\nu)=\sum_{k=0}^{\infty}f^{c}_{k}(\nu)(x-c)^{k},
\end{equation}
where $f^{c}_{k}(\nu)$ are polynomials of $\rho$. Then a power series $g(\nu,x)=(x-c)^{\nu}\sum_{k=0}^{\infty}d_{k}(x-c)^{k}$ satisfies $P(x,\partial)g(\nu,x)=0$ if and only if  equations
\begin{equation}
\sum_{k=0}^{l}c_{l-k}f^{c}_{k}(\nu+(l-k))=0
\end{equation}
 are satisfied for $l=0,1,\ldots.$ We call the equation $f^{c}_{0}(\rho)=0$ the characteristic equation at regular singular point $x=c$.

Then we have a similar result to Proposition \ref{strictly normal}.
\begin{prop}[Oshima \cite{O}]\label{strict regular}
Let us take $P(x,\partial)\in W[x]$ which has a regular singular point $x=c$. We write
\[
P(x,\partial)=\sum_{i=0}^{n}(x-c)^{n-i}a_{i}(x)\partial^{n-i}
\]
where $a_{i}(x)\in \C[x]$ and $a_{0}(c)\neq 0$. Then we can define polynomials $f^{c}_{k}(\nu)$ for $k=0,1,2,\ldots $ as $(\ref{characteristic polynomial regular})$. Then the followings are equivalent.
\begin{enumerate}
\item There exist $\mu_{1},\ldots,\mu_{l}\in \C$ such that $\mu_{i}-\mu_{j}\notin\Z$ if $i\neq j$. The following $n$ functions are solutions of $P(x,\partial)u=0$, 
\[
g_{ij}(x)=(x-c)^{\mu_{i}+j}+(x-c)^{\mu_{i}+m_{i}}h_{ij}(x-c)
\] 
for $i=1,\ldots,l$ and $j=0,\ldots,m_{i}-1$. Here $h_{ij}(x)\in \C[[x]]$ and $n=\sum_{i=1}^{l}m_{i}$.
\item There exist $\mu_{1},\ldots,\mu_{l}\in \C$ such that $\mu_{i}-\mu_{j}\notin\Z$ if $i\neq j$. For these $\mu_{i}$, we have
\[
f_{k}(\mu_{i}-j)=0
\]
for $i=1,\ldots,l$, $j=0,\ldots,m_{i}-1$ and $k=0,\ldots,m_{i}-j-1$.
\end{enumerate}
\end{prop}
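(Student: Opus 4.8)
The plan is to run a Frobenius argument at $x=c$, in complete parallel with the proof of Proposition~\ref{strictly normal} at infinity. The engine is the recurrence recorded just before the statement: a formal series $g=(x-c)^{\nu}\sum_{s\ge 0}d_{s}(x-c)^{s}$ is annihilated by $P(x,\partial)$ if and only if
\[
\sum_{k=0}^{l}d_{l-k}f^{c}_{k}(\nu+(l-k))=0\qquad(l=0,1,2,\ldots),
\]
whose $l=0$ instance $d_{0}f^{c}_{0}(\nu)=0$ is the indicial equation. I will use throughout that $f^{c}_{0}(\nu)$ is monic of degree $n$: its top coefficient comes from the $j=0$ summand $a_{0}(x)/a_{0}(x)=1$ in the defining expression for $f_{c}(x,\nu)$, so $f^{c}_{0}$ has exactly $n$ roots with multiplicity. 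I will also use that, since $\mu_{i}-\mu_{i'}\notin\Z$ for $i\neq i'$, the $n$ numbers $\mu_{i}+j$ $(1\le i\le l,\ 0\le j\le m_{i}-1)$ are pairwise distinct.

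For the implication $1\Rightarrow 2$ I would substitute each solution $g_{ij}$ into the recurrence. The shape $g_{ij}=(x-c)^{\mu_{i}+j}\bigl(1+\sum_{s\ge m_{i}-j}d_{s}(x-c)^{s}\bigr)$ records that $\nu=\mu_{i}+j$, $d_{0}=1$, and $d_{s}=0$ for $1\le s<m_{i}-j$. For every $l$ in the gap $0\le l\le m_{i}-j-1$ all coefficients $d_{l-k}$ vanish except $d_{0}$ at $k=l$, so the recurrence collapses to $f^{c}_{l}(\mu_{i}+j)=0$. This is exactly the vanishing asserted in~2, the relevant exponents being $\mu_{i}+j$ and $k$ running up to $m_{i}-j-1$.

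For $2\Rightarrow 1$ I would construct $g_{ij}$ by solving the recurrence with $\nu=\mu_{i}+j$ and $d_{0}=1$, and the delicate point is the resonance created by the integer differences inside a block. After forcing $d_{1}=\cdots=d_{l-1}=0$, the $l$-th equation for $l\ge 1$ in the gap reduces to $d_{l}f^{c}_{0}(\mu_{i}+j+l)+f^{c}_{l}(\mu_{i}+j)=0$; here $f^{c}_{l}(\mu_{i}+j)=0$ by~2 and $f^{c}_{0}(\mu_{i}+j+l)=0$ because $\mu_{i}+j+l$ is again one of the indicial roots, so the equation holds identically and I may legitimately take $d_{l}=0$. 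For $l\ge m_{i}-j$ one has $\nu+l\ge\mu_{i}+m_{i}$, and I must verify $f^{c}_{0}(\nu+l)\neq 0$ so that $d_{l}$ is uniquely determined: by the $k=0$ part of~2 the distinct numbers $\mu_{i}+j$ are $n$ roots of the degree-$n$ polynomial $f^{c}_{0}$, hence all of them, so no $\mu_{i}+m_{i}+t$ can be a root. The tail is then forced, and $g_{ij}$ acquires the claimed form $(x-c)^{\mu_{i}+j}+(x-c)^{\mu_{i}+m_{i}}h_{ij}(x-c)$.

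The step I expect to be the main obstacle is precisely the resonance bookkeeping in $2\Rightarrow 1$: one must see that condition~2 is exactly what makes every potentially logarithm-producing step inside the gap vanish identically, so that a pure power series survives, while the root-exhaustion argument — resting on the monicity and degree of $f^{c}_{0}$ together with $\mu_{i}-\mu_{i'}\notin\Z$ — is what forbids any further resonance beyond the block and renders the tail uniquely solvable.
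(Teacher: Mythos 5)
Your proof is correct, and it follows the same route the paper itself uses for the analogous statement at infinity: the paper gives no proof of Proposition \ref{strict regular} at all (it is quoted from Oshima \cite{O}), but its proof of Proposition \ref{strictly normal} is exactly your scheme --- substitute the ansatz into the recurrence $\sum_{k=0}^{l}d_{l-k}f^{c}_{k}(\nu+(l-k))=0$, let the gap in the coefficients collapse the sum in the forward direction, and solve the recurrence inductively in the converse. In fact your converse is more careful than that template: where the paper's proof at infinity only says the tail coefficients ``are determined inductively'', you verify the two facts this actually needs, namely that inside the gap each equation holds identically (both $f^{c}_{0}(\mu_{i}+j+l)$ and $f^{c}_{l}(\mu_{i}+j)$ vanish, so $d_{l}=0$ is an admissible choice) and that beyond the gap $f^{c}_{0}(\mu_{i}+j+l)\neq 0$, by the root-exhaustion argument resting on $f^{c}_{0}$ being monic of degree $n$ and the $n$ numbers $\mu_{i'}+j'$ being pairwise distinct.

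One discrepancy you should flag rather than pass over: what your computation establishes in condition 2 is $f^{c}_{k}(\mu_{i}+j)=0$ for $k=0,\ldots,m_{i}-j-1$, whereas the statement as printed reads $f^{c}_{k}(\mu_{i}-j)=0$. These are not equivalent sets of conditions, and the printed sign is a typo: for $P=(x\partial-\mu)(x\partial-\mu-1)$ at $c=0$ one has $f_{c}(x,\nu)=(\nu-\mu)(\nu-\mu-1)$ independently of $x$, so condition 1 holds with exponents $\{[\mu]_{2}\}$ while $f^{c}_{0}(\mu-1)=2\neq 0$. Your reading --- the $m_{i}-j$ conditions attached to the exponent $\mu_{i}+j$ of the solution $g_{ij}$ --- is the one forced by the recurrence, and it is the one consistent with Proposition \ref{strictly normal} and with the later use of these conditions (e.g.\ in the proof of Proposition \ref{oouue}). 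So your argument proves the intended statement, but the sentence claiming the derived vanishing ``is exactly the vanishing asserted in 2'' should instead note that condition 2 must be read with $\mu_{i}+j$ in place of $\mu_{i}-j$.
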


\begin{prop}[Oshima \cite{O}]\label{reduce degree}
Let us take $P(x,\partial)\in W[x]$ which has a regular singular point $x=c$. We write
\[
P(x,\partial)=\sum_{i=0}^{n}(x-c)^{n-i}a_{i}(x)\partial^{n-i}
\]
where $a_{i}(x)\in \C[x]$ and $a_{0}(c)\neq 0$. 
Then the followings are equivalent.
\begin{enumerate}
\item There exist  $m$ functions,
\[
g_{i}(x)=(x-c)^{i}+(x-c)^{m}h_{i}(x-c)
\] 
for $i=0,\ldots,m-1$ and these are solutions of $P(x,\partial)u=0$. Here $h_{i}(x)\in \C[[x]]$.
\item There exist $Q(x,\partial)\in W[x]$ such that
\[
P(x,\partial)=(x-c)^{m}Q(x,\partial).
\]

\end{enumerate}
\end{prop}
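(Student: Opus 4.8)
The plan is to prove both implications at once by showing that each of $(1)$ and $(2)$ is equivalent to the single system of indicial conditions
\[
(\ast)\qquad f^{c}_{k}(i)=0\quad\text{for all integers } i,k\ge0 \text{ with } i+k\le m-1,
\]
where the polynomials $f^{c}_{k}(\nu)$ are those attached to $P$ in $(\ref{characteristic polynomial regular})$. The one computation I will use repeatedly is the identity $P(x-c)^{\nu}=a_{0}(x)f_{c}(x,\nu)(x-c)^{\nu}$, which is immediate from the definition of $f_{c}$, together with the hypothesis $a_{0}(c)\neq0$. Writing $\mathrm{ord}_{c}$ for the order of vanishing at $x=c$, the latter ensures that $f_{c}(x,\nu)$ is holomorphic at $x=c$ and that $\mathrm{ord}_{c}P(x-c)^{\nu}=\nu+\mathrm{ord}_{c}f_{c}(x,\nu)$ for integers $\nu\ge0$.

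For the equivalence $(1)\Leftrightarrow(\ast)$ I would invoke Proposition \ref{strict regular}. Statement $(1)$ is exactly its condition $(1)$ for a single exponent block with $\mu_{1}=0$ and $m_{1}=m$, whose members are $(x-c)^{j}+(x-c)^{m}h_{j}(x-c)$ for $j=0,\dots,m-1$, and the matching condition $(2)$ of Proposition \ref{strict regular} is precisely the system $(\ast)$. To confirm that this is the right reading one runs the Frobenius recursion $\sum_{k=0}^{l}d_{l-k}f^{c}_{k}(\nu+(l-k))=0$ for a trial solution $(x-c)^{i}+(x-c)^{m}h_{i}$, i.e. with $\nu=0$ and coefficients $d_{t}=\delta_{t,i}$ for $t<m$: the levels $l=i,\dots,m-1$ contribute only the single term $f^{c}_{l-i}(i)$, forcing $f^{c}_{k}(i)=0$ for $k=0,\dots,m-1-i$, while the levels $l\ge m$ only serve to determine the higher coefficients of $h_{i}$. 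The genuine content, which I take from Proposition \ref{strict regular} rather than reprove, is precisely that $(\ast)$ already guarantees that each truncated series extends to an honest \emph{logarithm-free} formal solution despite the integer coincidences among the exponents $0,1,\dots,m-1$; this no-obstruction assertion is the only delicate point, and I expect it to be the main obstacle were one to seek a self-contained argument.

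For $(2)\Leftrightarrow(\ast)$ the argument is elementary. Writing $P=\sum_{d=0}^{n}p_{d}(x)\partial^{d}$ with $p_{d}\in\C[x]$, one has $P=(x-c)^{m}Q$ with $Q\in W[x]$ if and only if $(x-c)^{m}\mid p_{d}$ for every $d$, i.e. if and only if $P$ carries $\C[x]$ into the ideal $(x-c)^{m}\C[x]$. The nontrivial half of this is seen by taking the least $d_{0}$ with $(x-c)^{m}\nmid p_{d_{0}}$ and evaluating on $(x-c)^{d_{0}}$: since $\partial^{d}(x-c)^{d_{0}}=\tfrac{d_{0}!}{(d_{0}-d)!}(x-c)^{d_{0}-d}$, minimality gives $P(x-c)^{d_{0}}\equiv d_{0}!\,p_{d_{0}}\pmod{(x-c)^{m}\C[x]}$, so membership in $(x-c)^{m}\C[x]$ would force $(x-c)^{m}\mid p_{d_{0}}$, a contradiction. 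Finally, $P$ carries $\C[x]$ into $(x-c)^{m}\C[x]$ if and only if $\mathrm{ord}_{c}P(x-c)^{i}\ge m$ for every $i\ge0$; by the identity above this is automatic for $i\ge m$ and, for $0\le i\le m-1$, is equivalent to $\mathrm{ord}_{c}f_{c}(x,i)\ge m-i$, that is to $f^{c}_{k}(i)=0$ for $k=0,\dots,m-1-i$. Ranging over $i=0,\dots,m-1$ these are exactly the conditions $(\ast)$, which closes the chain $(1)\Leftrightarrow(\ast)\Leftrightarrow(2)$.
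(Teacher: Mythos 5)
Your equivalence $(2)\Leftrightarrow(\ast)$ is correct and complete, and it is in substance the paper's own argument, which likewise applies $P$ to the monomials $(x-c)^{i}$ and extracts divisibility of the polynomial coefficients. Your Frobenius-recursion argument for $(1)\Rightarrow(\ast)$ is also sound, so you do have a full proof of the implication $(1)\Rightarrow(2)$.

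The gap is the step $(\ast)\Rightarrow(1)$. Proposition \ref{strict regular} cannot be cited for a single exponent block $\mu_{1}=0$, $m_{1}=m$: its condition 1 requires the blocks to exhaust the order, $\sum_{i}m_{i}=n=\mathrm{ord}\,P$, whereas in Proposition \ref{reduce degree} one has $m<n$ in general (and in every application the paper makes of it, e.g.\ in Theorem \ref{lapalce transform of riemann scheme}). When $m<n$ the implication is in fact false, because resonances with exponents outside $\{0,\dots,m-1\}$ can obstruct extending the truncated monomials to genuine solutions. Take $c=0$, $m=1$ and
\[
P(x,\partial)=x^{2}\partial^{2}-x\partial+x=x\,(x\partial^{2}-\partial+1),
\]
so that $f_{c}(x,\nu)=\nu(\nu-2)+x$, hence $f^{c}_{0}(0)=0$ and both $(\ast)$ and $(2)$ hold. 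But a solution $u=1+xh_{0}(x)=\sum_{t\ge 0}d_{t}x^{t}$, $d_{0}=1$, of $Pu=0$ (equivalently of $xu''-u'+u=0$, since $x$ is not a zero divisor in $\C[[x]]$) would have to satisfy $(s+1)(s-1)d_{s+1}+d_{s}=0$ for all $s\ge 0$, which forces $d_{1}=1$ at $s=0$ and the contradiction $1=0$ at $s=1$; the exponents of $P$ at $0$ are $0$ and $2$, and the resonance at $2$ kills the series. So $(\ast)\not\Rightarrow(1)$, and no citation can repair this step. Note that the same example shows the implication $(2)\Rightarrow(1)$ of the Proposition as literally stated is itself false: the paper's own proof of that direction establishes only that $P(x,\partial)(x-c)^{i}\equiv 0\pmod{(x-c)^{m}\C[[x]]}$ for $i=0,\dots,m-1$, which is exactly your condition $(\ast)$, and it tacitly reads condition 1 in that weaker sense (the sense in which the proposition is actually used later). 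Thus your chain $(\ast)\Leftrightarrow(2)$ proves precisely what the paper proves and needs; what is wrong is the claim that Proposition \ref{strict regular} upgrades $(\ast)$ to the literal statement $(1)$.
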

\begin{proof}
Although the proof of this proposition can be found in \cite{O}, we prove
 this for the completeness. 
Suppose that 2 is true. We notice that $Q(x,\partial)(x-c)^{i}$ are holomorphic at $x=c$ for any $i\in \Z_{\ge0}$. Hence if we write $Q(x,\partial)(x-c)^{i}=h_{i}(x)\in\C[[x]]$, then
we have
\[
P(x,\partial)(x-c)^{i}=(x-c)^{m}Q(x,\partial)(x-c)^{i}=(x-c)^{m}h_{i}(x).
\]
Conversely, if 1 is true, there exist $h_{i}(x)\in \C[[x]]$ such that
\[
P(x,\partial)(x-c)^{i}=\sum_{j=0}^{n}(x-c)^{n-j}a_{j}(x)\partial^{n-j}(x-c)^{i}=(x-c)^{m}h_{i}(x),
\]
for $i=0,\ldots,m-1$. If $i=0$, we have
\[
a_{0}(x)=(x-c)((x-c)^{m-1}h_{0}(x)-\sum_{j=1}^{n}(x-c)^{n-i-1}a_{j}(x)).
\]
Hence $P(x,\partial)=(x-c)Q_{0}(x,\partial)$ for a $Q(x,\partial)\in W[x]$. If $i=1$, we have
\[
(x-c)(a_{1}(x)+a_{0}(x))=(x-c)^{2}((x-c)^{m-2}-\sum_{j=2}^{n}(x-c)^{n-j-2}a_{j}(x)).
\]
Hence $P(x,\partial)=(x-c)^{2}Q_{1}(x,\partial)$ for a $Q_{1}(x,\partial)\in W[x].$ We can iterate these for $i=0,1,\ldots, m-1$.
\end{proof}
We can define semi-simple characteristic exponents at regular singular
points as we define for formal solutions.
\begin{df}
Let us take  $P(x,\partial)\in W[x]$ which has a regular singular point $x=c$. We write
\[
P(x,\partial)=\sum_{i=0}^{n}(x-c)^{n-i}a_{i}(x)\partial^{n-i}
\]
where $a_{i}(x)\in \C[x]$ and $a_{0}(c)\neq 0$. Then we can define polynomials $f^{c}_{k}(\nu)$ for $k=0,1,2,\ldots $ as $(\ref{characteristic polynomial regular})$.   If there exist $\mu_{1},\ldots,\mu_{l}\in \C$ such that $\mu_{i}-\mu_{j}\notin\Z$ $(i\neq j)$ and we have 
\[
f_{k}(\mu_{i}-j)=0
\]
for $i=1,\ldots,l$, $j=0,\ldots,m_{i}-1$ and $k=0,\ldots,m_{i}-j-1$, then we say that $P(x,\partial)$ has  semi-simple characteristic exponents
\[
\{[\mu_{1}]_{m_{1}},\ldots,[\mu_{l}]_{m_{l}}\}
\]
at $x=c$.
\end{df}

\begin{prop}\label{addition and exponents}
Let $P(x,\partial)\in W[x]$ has a regular singular point $x=c$ and semi-simple exponents  
\[
 \{[\rho_{1}]_{m_{1}},\ldots,[\rho_{l}]_{m_{l}}\}.
\]
\begin{enumerate}
\item For $\nu\in \C$, the differential equation
      $\mathrm{Ad}((x-c)^{\nu})P(x,\partial)u=0$ has semi-simple
      exponents
\[
  \{[\rho_{1}+\nu]_{m_{1}},\ldots,[\rho_{l}+\nu]_{m_{l}}\}
\]
at $x=c$.
\item If $\alpha\neq c$, then the  addition at $x=\alpha$ does not change the
      set of exponents at $x=c$ of $P(x,\partial)$, that is, $\mathrm{Ad}((x-\alpha)^{\nu})P(x,\partial)$
      has semi-simple exponents
\[
  \{[\rho_{1}]_{m_{1}},\ldots,[\rho_{l}]_{m_{l}}\}
\]
at $x=c$ as well.
\item For $p(x)\in\C$, the set of exponents at $x=c$ of $P(x,\partial)$
      are not changed by $\mathrm{Ade}(p(x))$, that is, $\mathrm{Ade}(p(x))P(x,\partial)$ has semi-simple exponents
\[
  \{[\rho_{1}]_{m_{1}},\ldots,[\rho_{l}]_{m_{l}}\}
\]
at $x=c$ as well.
\end{enumerate}
\end{prop}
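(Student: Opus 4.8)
The plan is to treat all three operations uniformly as conjugations by a function and to track how the canonical solutions furnished by Proposition \ref{strict regular} transform. By Remark \ref{addition} the addition $\mathrm{Ad}((x-\alpha)^{\nu})$ acts as $P\mapsto (x-\alpha)^{\nu}P(x-\alpha)^{-\nu}$, and the $e^{p(x)}$-twisting $\mathrm{Ade}(p(x))$ acts as $P\mapsto e^{p(x)}Pe^{-p(x)}$; in each case the transformed operator $\tilde P=\phi P\phi^{-1}$ satisfies $\tilde P(\phi u)=\phi(Pu)$, so multiplication by the conjugating function $\phi$ carries solutions of $Pu=0$ to solutions of $\tilde Pv=0$. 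Since conjugation preserves the order $n$, this gives a bijection of the $n$-dimensional solution spaces. Thus it suffices to check that the image of a canonical basis at $x=c$ can be rearranged into a canonical basis with the asserted exponents, and then to invoke the equivalence in Proposition \ref{strict regular}.

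For assertion (1) the conjugating function is the pure monomial $\phi=(x-c)^{\nu}$. Each canonical solution $g_{ij}(x)=(x-c)^{\rho_{i}+j}+(x-c)^{\rho_{i}+m_{i}}h_{ij}(x-c)$ is sent to $(x-c)^{\rho_{i}+\nu+j}+(x-c)^{\rho_{i}+\nu+m_{i}}h_{ij}(x-c)$, which is again in canonical form, now with leading exponent $\rho_{i}+\nu+j$ and tail starting at $\rho_{i}+\nu+m_{i}$. Since $(\rho_{i}+\nu)-(\rho_{j}+\nu)=\rho_{i}-\rho_{j}\notin\Z$, the nonresonance hypothesis persists, and Proposition \ref{strict regular} yields the semisimple exponents $\{[\rho_{1}+\nu]_{m_{1}},\ldots,[\rho_{l}+\nu]_{m_{l}}\}$. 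Here multiplication by a monomial preserves the canonical shape exactly, so no further bookkeeping is needed.

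Assertions (2) and (3) are the substantive cases: the conjugating functions $(x-\alpha)^{\nu}$ (with $\alpha\neq c$) and $e^{p(x)}$ are both holomorphic and nonvanishing at $x=c$, hence units $\phi=\sum_{k\ge 0}b_{k}(x-c)^{k}$ with $b_{0}\neq 0$ in $\C[[x-c]]$. Multiplying preserves the leading exponent of each $g_{ij}$ but introduces intermediate terms: within the block $V_{i}=\langle g_{i0},\ldots,g_{i,m_{i}-1}\rangle$ the coefficient of $(x-c)^{\rho_{i}+k}$ in $\phi g_{ij}$ equals $b_{k-j}$ for $k\ge j$ and vanishes for $k<j$, so the $m_{i}\times m_{i}$ matrix of these principal coefficients is upper triangular with diagonal entries $b_{0}\neq 0$, hence invertible. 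Taking the corresponding triangular linear combinations of the $\phi g_{ij}$ therefore produces solutions $\tilde g_{ij}=(x-c)^{\rho_{i}+j}+(x-c)^{\rho_{i}+m_{i}}(\cdots)$ back in canonical form with the original exponents; since the $\rho_{i}$ remain pairwise noncongruent modulo $\Z$, distinct blocks do not interact, and Proposition \ref{strict regular} again applies, giving the unchanged exponent set $\{[\rho_{1}]_{m_{1}},\ldots,[\rho_{l}]_{m_{l}}\}$.

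The main point to get right is this block-wise normalization in (2) and (3): multiplication by a unit does \emph{not} preserve the canonical form term by term, and one must argue that the $m_{i}$ images still span a space admitting the normalized basis $\tilde g_{ij}$. The triangularity of the coefficient matrix, together with the noncongruence of the $\rho_{i}$ guaranteeing that the blocks remain independent, is exactly what makes the Gaussian elimination succeed; everything else is the routine observation that $\phi$ is a unit in $\C[[x-c]]$ in these two cases and a monomial in the first.
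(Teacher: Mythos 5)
Your proof is correct and follows essentially the same route as the paper: both arguments transport solutions through the conjugation $\tilde P=\phi P\phi^{-1}$ via $u\mapsto\phi u$, note that the monomial $(x-c)^{\nu}$ shifts every exponent by $\nu$, and observe that $(x-\alpha)^{\nu}$ (for $\alpha\neq c$) and $e^{p(x)}$ are holomorphic units at $x=c$ so the exponents are unchanged. The only difference is one of rigor: the paper stops at ``a solution $(x-c)^{\rho}\sum c_{i}x^{i}$ maps to a solution $(x-c)^{\rho}\sum f_{i}x^{i}$,'' whereas you additionally perform the block-triangular Gaussian elimination needed to restore the strict canonical form $g_{ij}=(x-c)^{\rho_{i}+j}+(x-c)^{\rho_{i}+m_{i}}h_{ij}$ demanded by Proposition \ref{strict regular}, a step the paper leaves implicit.
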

\begin{proof}
If a function $u(x)$ is a solution of $P(x,\partial)u(x)=0$, then for
 $\alpha,\nu\in\C$, the function $(x-\alpha)^{\nu}u(x)$ satisfies that
\begin{multline*}
\mathrm{Ad}((x-\alpha)^{\nu})P(x,\partial)(x-\alpha)^{\nu}u(x)\\
=(x-\alpha)^{\nu}P(x,\partial)(x-\alpha)^{-\nu}(x-\alpha)^{\nu}u(x)\\
=(x-\alpha)^{\nu}P(x,\partial)u(x)=0.
\end{multline*}
Hence if $\alpha=c$ and $u(x)=(x-c)^{\rho}\sum_{i=0}^{\infty}c_{i}x^{i}$
 is a solution of $P(x,\partial)u(x)=0$ around $x=c$, then $(x-c)^{\nu}u(x)=(x-c)^{\rho+\nu}\sum_{i=0}^{\infty}d_{i}x^{i}$
 is a solution of $\mathrm{Ad}((x-c)^{\nu})P(x,\partial)v(x)=0$.

 On the
 other hand, if $\alpha\neq c$, the function $(x-\alpha)^{\nu}$ is
 holomorphic at $x=c$. Hence we can write the Taylor expansion
 $(x-\alpha)^{\nu}=\sum_{i=0}^{\infty}e_{i}x^{i}$. This implies that
 $(x-\alpha)^{\nu}u(x)=\sum_{i=0}^{\infty}e_{i}x^{i}(x-c)^{\rho}\sum_{j=0}^{\infty}c_{j}x^{i}=(x-c)^{\rho}\sum_{i=0}^{\infty}f_{i}x^{i}$. Therefore
 exponents does not change.

For the final assertion, we recall that
\[
 \mathrm{Ade}(p(x))P(x,\partial)=\exp(p(x))P(x,\partial)\exp{(-p(x))},
\]
and $\exp((p(x))$ is holomorphic on $\C$. By the same argument as
 above, the final assertion follows.
\end{proof}

\section{Differential equations of irregular rank $2$ at infinity}
The twisted Euler transform turns  $P(x,\partial)\in W[x,\xi]$ into the
other $Q(x,\partial)\in W[x,\xi]$. The question is  how this
transformation changes local datum of these differential operators. We
focus on the case of  the rank of irregular singularity at most 2 and give
 explicit descriptions about the changes of characteristic exponents by
twisted Euler transforms.
\subsection{The rank of irregular singularity, the Newton polygon}
Let us recall the notions, the rank of irregular singularity and the
Newton polygon of a differential operator.
 
\begin{df}[The rank of irregular singularity at infinity]\label{irregular}
Let us consider a linear differential equation,
\begin{equation}\label{irreg}
[x^{n}\partial^{n}+a_{1}(x)x^{n-1}\partial^{n-1}+a_{2}(x)x^{n-2}\partial^{n-2}+\cdots +a_{n}(x)]f(x)=0.
\end{equation}
Here coefficients $a_{i}(x)$ are Laurent series,
\[
a_{i}(x)=x^{m_{i}}\sum_{k=0}^{\infty}a_{k}^{i}x^{-k}\ (a_{0}^{i}\neq 0),
\]
where $m_{i}\in\Z$ for $i=1,\ldots,n$. 

The rank of irregular singularity at infinity of $(\ref{irreg})$ is the number defined by
\[
q=\max\{\frac{m_{i}}{i}\mid i=1,\ldots,n\}.
\]
\end{df}
Let us take  $P(x,\partial)=\sum a_{i}(x)\partial^{i}\in\mathcal{K}\langle\partial\rangle$. Every $a_{i}(x)\partial^{i}$ associate the point
$(i,i-v(a_{i}))$ of $\N\times \Z$. Then we define the Newton polygon $N(P)$
of $P$ to be the convex hull of the set
\[
 \bigcup_{i}\{(x,y)\in \R^{2}\mid x\le i,\, y\ge i-v(a_{i})\}.
\]
Let  $\{s_{i}=(u_{i},v_{i})\}_{0\le i \le p}$  be the set of vertices of this polygon such that
$0=u_{0}<u_{1}<\cdots <u_{p}=n\ (n=\mathrm{ord}\, P)$. Slopes of the edge connecting $s_{i}$ and $s_{i-1}$ are
\[
 \lambda_{i}=\frac{v_{i}-v_{i-1}}{u_{i}-u_{i-1}}
\]
 for $i=0,\ldots,p-1$. Clearly we have $\lambda_{1}<\lambda_{2}<\cdots<\lambda_{p}$. We define lengths $L_{i}$ of segments $[s_{i-1},s_{i}]$ by $L_{i}=u_{i}-u_{i-1}.$
 We note that $\lambda_{p}$ corresponds to the irregular
 rank at infinity of $P$. We refer \cite{Mal},\cite{Ram} and
 $\cite{Tour}$ for further things about Newton polygons.
 
 \begin{rem}\label{Newton polygon and formal solutions}
 Let us consider the Newton polygon of $P(x,\partial)$ whose vertices
  $s_{0},s_{1},\ldots,s_{r}$, slopes $\lambda_{1}<\lambda_{2}<\cdots
  <\lambda_{r}$ and lengths of segments are $L_{1},\ldots,L_{r}$. Then
  for $1\le i\le r$, there exist $q\in\Z_{>0}$ and  $L_{i}$ linearly independent formal solutions of $P(x,\partial)u=0$,
 \[
 f^{ij}_{k}(x)=e^{p^{ij}(x)}x^{\mu^{ij}_{k}}h^{ij}_{k}(x^{\frac{1}{q}})
 \]
  for $q\in\Z_{>0}$ $h^{ij}_{k}(x)\in\C[[x^{-1}]][\log x^{-1}]$.
 Here 
 \[
 p^{ij}(x)=\sum_{l=0}^{r^{ij}}a^{ij}(l)x^{\frac{r^{i}-l}{q}}
 \]
and $\frac{r_{i}}{q}=\lambda_{i}$.
\end{rem}

\subsection{Fourier-Laplace transform}
As is well-known, the Fourier-Laplace transform exchanges regular singular points on $\C$ and irregular singular point of rank 1 at infinity. Let us see the way how the  Fourier-Laplace transform changes ranks of irregular
singularities and characteristic exponents of a differential operators of
irregular rank at most 2.
\begin{prop}\label{oouue}
Let us take  $P(x,\partial)\in W[x]$ of $\deg P=N$ and $\mu_{1},\ldots,\mu_{l}\in \C$ such that $\mu_{i}\notin \Z$ and $\mu_{i}-\mu_{j}\notin \Z$ $(i\neq j)$. Then the followings are
 equivalent.
\begin{enumerate}
\item The differential equation $P(x,\partial)u=0$ has semi-simple exponents
\[
 \{[\mu_{1}]_{m_{1}},\ldots,[\mu_{l}]_{m_{l}}\}
\]
at $x=\infty.$
Here $m=\sum_{i=1}^{l}m_{i}.$
\item For the Fourier-Laplace transform $P(-\partial,x)$ of $P(x,\partial)$ has
 regular singular point  at $x=0$ and $P(-\partial,x)u=0$ has the semi-simple characteristic exponents,
 \[
 \{[0]_{N-m}[\mu_{1}-1]_{m_{1}},\ldots,[\mu_{l}-1]_{m_{l}}\}
\]
at $x=0$.
\end{enumerate}
\end{prop}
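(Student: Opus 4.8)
The plan is to reduce both conditions to a single identity relating the indicial (characteristic) polynomial of $P$ at $x=\infty$ with that of $\mathcal{L}(P)=P(-\partial,x)$ at $x=0$. Write $P=\sum_{i,j}p_{ij}\,x^{j}\partial^{i}$, so that $\mathcal{L}(P)=\sum_{i,j}(-1)^{j}p_{ij}\,\partial^{j}x^{i}$, and set $d=\max\{\,j-i\mid p_{ij}\neq 0\,\}$. First I would compute the two leading balances. Applying $P$ to $x^{-\mu}$ gives $\sum_{i,j}p_{ij}(-\mu)(-\mu-1)\cdots(-\mu-i+1)\,x^{j-i-\mu}$, whose top power $x^{d-\mu}$ has coefficient $I_{\infty}(\mu):=\sum_{j-i=d}p_{ij}(-\mu)(-\mu-1)\cdots(-\mu-i+1)$; up to the sign convention of Definition \ref{characteristic equation at infinity} this is the characteristic polynomial at infinity, and by that definition together with Proposition \ref{strictly normal} the hypothesis of (1) forces $\deg I_{\infty}=m$ with root set $\{\mu_{i}+j\}$. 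Applying $\mathcal{L}(P)$ to $x^{\nu}$ and collecting the lowest power $x^{\nu-d}$ produces the indicial polynomial at $0$, namely $I_{0}(\nu):=\sum_{j-i=d}(-1)^{j}p_{ij}(\nu+i)(\nu+i-1)\cdots(\nu+i-j+1)$, the lowest power being governed by the same edge $j-i=d$ that controls $I_{\infty}$.

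The heart of the proof is the elementary identity
\begin{equation*}
I_{0}(\nu)=(-1)^{d}\,\nu(\nu-1)\cdots(\nu-d+1)\,I_{\infty}(\nu+1).
\end{equation*}
I would prove it by substituting $i=j-d$ on the edge and splitting the descending product $(\nu+i)\cdots(\nu+i-j+1)$, which runs from $\nu+i$ down to $\nu-d+1$, into the block $(\nu+1)\cdots(\nu+i)$ and the common block $\nu(\nu-1)\cdots(\nu-d+1)$; the first block equals $(-1)^{i}(-\nu-1)(-\nu-2)\cdots(-\nu-i)$, and since $(-1)^{j}=(-1)^{d}(-1)^{i}$ it recombines to give exactly $(-1)^{d}$ times the $\nu$-block times $I_{\infty}(\nu+1)$. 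Granting the identity, the factor $\nu(\nu-1)\cdots(\nu-d+1)$ contributes the integer exponents $0,1,\dots,d-1$ at $x=0$, while $I_{\infty}(\nu+1)$ contributes the exponents $\mu_{i}-1+j$. Under the hypotheses $\mu_{i}\notin\Z$ and $\mu_{i}-\mu_{j}\notin\Z$ these blocks are pairwise non-congruent modulo $\Z$, so no roots are lost or merged, and the exponent set at $0$ reads off as $\{[0]_{d},[\mu_{1}-1]_{m_{1}},\dots,[\mu_{l}-1]_{m_{l}}\}$.

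It remains to match the integer parameters, i.e. to prove $d=N-m$, and this is the step I expect to be the main obstacle. Since $\mathrm{ord}\,\mathcal{L}(P)=\deg P=N$, the point $x=0$ is a regular singular point of $\mathcal{L}(P)$ exactly when $\deg I_{0}=N$, and by the identity $\deg I_{0}=d+m$; so everything reduces to $d=N-m$. Here I would invoke the standing assumption that $P$ is unramified at infinity. Writing $\phi(i)$ for the $x$-degree of the coefficient of $\partial^{i}$, one has $m=\max\{i:\phi(i)-i=d\}$ and $N=\max_{i}\phi(i)\ge m+d$; if the maximum $N$ were attained at some order $i_{2}$ strictly above the slope-$0$ edge, then a direct computation of the Newton slope between that vertex and the top of the edge yields a value in the open interval $(0,1)$, i.e. a ramified (non-integral) slope, contradicting unramifiedness. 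Hence $N=m+d$, giving $[0]_{d}=[0]_{N-m}$ and the full list of condition (2) at the level of leading coefficients; this is the direction (1)$\Rightarrow$(2). For the converse one is given that $0$ is a regular singular point, so $\deg I_{0}=N$ and thus $d=N-m$ directly, after which the same identity recovers the root set $\{\mu_{i}+j\}$ of $I_{\infty}$ and hence condition (1).

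Finally I would promote this leading-order statement to the full semi-simplicity of Propositions \ref{strictly normal} and \ref{strict regular}. Those conditions amount to the vanishing of the higher characteristic polynomials $p_{k}$ (for $P$ at $\infty$) and $f_{k}$ (for $\mathcal{L}(P)$ at $0$) on their respective shifted arguments, and I would check that the same falling-factorial bookkeeping, applied to each graded piece in powers of $x$ rather than only to the top edge, identifies these two families under $\nu\mapsto\mu-1$. Because the blocks $[0]_{N-m}$ and $[\mu_{i}-1]_{m_{i}}$ are separated modulo $\Z$, the semi-simplicity conditions decouple block by block, so this term-by-term identification transfers the absence of logarithmic terms in both directions and completes the equivalence.
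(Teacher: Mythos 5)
Your key identity $I_{0}(\nu)=(-1)^{d}\,\nu(\nu-1)\cdots(\nu-d+1)\,I_{\infty}(\nu+1)$ is correct, and it is in substance the leading term of the recursion the paper obtains by applying $\mathcal{L}P$ to $x^{\mu}\sum_{s}d_{s}x^{s}$; up to that point the two arguments coincide. The first genuine gap is the step you yourself single out, $d=N-m$. You settle it by invoking ``the standing assumption that $P$ is unramified at infinity,'' but Proposition \ref{oouue} has no such hypothesis, and one cannot add it for free: the proposition genuinely applies to ramified operators, e.g. $P=(\partial^{2}-x)(x\partial+\mu)=x\partial^{3}+(2+\mu)\partial^{2}-x^{2}\partial-\mu x$ satisfies condition 1 with $m=1$, $N=2$, while its remaining formal solutions are Airy-type of slope $3/2$. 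The assumption is also unnecessary, and this is what you missed: condition 1 is not merely the statement that the top-edge polynomial $I_{\infty}$ has degree $m$ and roots $\mu_{i}+j$. By the paper's definition of semi-simple characteristic exponents at $x=\infty$ (the hypothesis under which Proposition \ref{strictly normal} operates), it includes the structural vanishing $P_{i}^{(j)}(0)=0$ for $i+j<m$ together with $P_{0}^{(m)}(0)\neq 0$. In your coordinates these say precisely that $p_{ab}=0$ whenever $b-a>N-m$, and that $p_{m,N}\neq 0$; the first gives $d\le N-m$, the second $d\ge N-m$, so $d=N-m$ with no Newton-polygon argument at all. (This is exactly how the paper proceeds: it rewrites $P=\sum_{s}x^{N-s}P_{s}(\partial)\partial^{\max\{m-s,0\}}$ and reads off the Fuchs criterion for $\mathcal{L}P$ at $x=0$.)

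The second gap is the converse. ``One is given that $0$ is a regular singular point, so $\deg I_{0}=N$ and thus $d=N-m$ directly'' is a non sequitur: regular singularity gives $d+\deg I_{\infty}=N$, and you would still need $\deg I_{\infty}=m$, which the indicial data at $0$ cannot supply, because integer roots of $I_{0}$ may come either from the factorial factor or from genuine integer roots of $I_{\infty}(\,\cdot\,+1)$, and nothing in your argument separates the two. Concretely, $P=(x\partial+1)(x\partial+\mu)=x^{2}\partial^{2}+(2+\mu)x\partial+\mu$ has $\mathcal{L}P=x^{2}\partial^{2}+(2-\mu)x\partial$, which is regular singular at $0$ with semi-simple exponents $\{0,\mu-1\}=\{[0]_{N-m},[\mu-1]_{1}\}$ for $N=2$, $m=1$; yet $d=0\neq N-m$, and the exponents of $P$ at infinity are $\{1,\mu\}$, not $\{[\mu]_{1}\}$. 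So your converse inference fails exactly at this point (the example also shows the converse is more delicate than the paper's one-sentence dismissal suggests: one must know that the block $[0]_{N-m}$ contains no genuine exponents of $P$, which is structural information, not root bookkeeping). Finally, your closing paragraph, where full semi-simplicity (the vanishing of all higher $p_{k}$ and $f_{k}$ on shifted arguments, i.e. absence of logarithms, not just equality of indicial roots) is to be transferred, is only a declaration of intent; that ``falling-factorial bookkeeping applied to each graded piece'' is precisely the multi-line recursion computation that constitutes the body of the paper's proof, so as written the main analytic content of the proposition is asserted rather than proved.
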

\begin{proof}
Suppose that 1 is true. From the assumption $\mu_{i}\notin \Z$, we can see $N\ge m$. If $N<m$,  we can write 
\[
P(x,\partial)=\sum_{s=0}^{N}x^{N-s}P_{s}(\partial)\partial^{m-s}=Q(x,\partial)\partial^{N-m}
\]
for $Q(x,\partial)\in W[x]$. Then polynomials $\sum_{i=0}^{N-m-1}a_{i}x$ for $a_{i}\in\C$ satisfy $P(x,\partial)u=0$. It contradicts our assumption. 

If we write
 $P(x,\partial)=\sum_{s=0}^{N}x^{N-s}P_{s}(\partial)\partial^{\max\{m-s,0\}}$,
 the Laplace transform 
\begin{align*}
 P(-\partial,x)&=\sum_{s=0}^{N}(-\partial)^{N-s}P_{s}(x)x^{\max\{m-s,0\}}\\
&=\sum_{s=0}^{N}Q_{s}(x)x^{\max\{m-s,0\}}(-\partial)^{N-s}
\end{align*} 
for $P_{s}(x),Q_{s}(x)\in\C[x]$ and $P_{0}(x)=Q_{0}(x)$. By the assumption $P_{0}(0)=Q_{0}(0)\neq 0$, it follows that $x=0$ is a regular singular point of $P(\partial,x)u=0$.

Let us take a power series $x^{\mu}\sum_{s=0}^{\infty}d_{s}x^{s}$. Then we can see that 
\begin{multline*}
\mathcal{L}P(x,\partial)x^{\mu}\sum_{s=0}^{\infty}d_{s}x^{s}=\sum_{j=0}^{N}(-\partial)^{N-j}P_{j}(x)x^{\mu}\sum_{s=0}^{\infty}d_{s}x^{s}\\
=\sum_{s=m}^{\infty}x^{\mu-N+s}\sum_{k=m}^{s}d_{s-k}\sum_{l=0}^{k}(-\mu-s+l)(-\mu-s+l+1)\cdots(-\mu-s+N-1)\frac{P^{(k-l)}_{l}(0)}{(k-l)!}\\
=\sum_{s=m}^{\infty}x^{\mu-N+s}\sum_{k=m}^{s}d_{s-k}\sum_{l=0}^{k}(-\mu-s+l)_{(k-l)}(-\mu-s+k)_{(N-k)}\frac{P^{(k-l)}_{l}(0)}{(k-l)!}\\
=\sum_{s=m}^{\infty}x^{\mu-N+s}\sum_{k=m}^{s}(-\mu-s+k)_{(N-k)}d_{s-k}p_{k}(-\mu-1,0).
\end{multline*}
Therefore if $\mathcal{L}P(x,\partial)x^{\mu}\sum_{s=0}^{\infty}d_{s}x^{s}=0$, then it must be satisfied that
\[
\sum_{k=m}^{s}d_{s-k}(-\mu-s+k)_{(N-k)}p_{k}(-\mu-1-s,0)
\]
for $s=m,m+1,\ldots$. By the assumption we have
\[p_{k}(-\mu_{i}-j-k,0)=0
\]
 for 
\begin{gather*}i=1,\ldots,l,\\
j=0,\ldots,m_{i}-1,\\
k=m,m+1,\ldots,m+m_{i}-j-1.
\end{gather*}
Also we have
\[
(-i-s+k)_{(N-k)}=0
\] 
for $i=0,\ldots,(N-m)-1$ and $k=m,m+1,\ldots,N-j-1$. Hence the
 differential equation $\mathcal{L}P(x,\partial)u=0$ has a regular singular point at $x=0$ and semi-simple characteristic exponents
\[
\{[0]_{N-m},[\mu_{1}-1]_{m_{1}},\ldots,[\mu_{l}-1]_{m_{l}}\}.
\]
The converse direction can be shown by the same way.
\end{proof}
The same thing can be shown for the Fourier-Laplace inverse transform.
\begin{prop}\label{laplace inverse transform of regular point}
Let us take  $P(x,\partial)\in W[x]$ of $\deg P=N$ and $\mu_{1},\ldots,\mu_{l}\in \C$ such that $\mu_{i}\notin \Z$ and $\mu_{i}-\mu_{j}\notin \Z$ $(i\neq j)$. Then the followings are
 equivalent.
\begin{enumerate}
\item The differential equation $P(x,\partial)u=0$ has semi-simple exponents
\[
 \{[\mu_{1}]_{m_{1}},\ldots,[\mu_{l}]_{m_{l}}\}
\]
at $x=\infty.$
Here $m=\sum_{i=1}^{l}m_{i}.$
\item For the Fourier-Laplace inverse transform $P(\partial,-x)$ of
      $P(x,\partial)$ has 
a regular singular point at $x=0$ and $P(\partial,-x)u=0$ has the semi-simple characteristic exponents,
 \[
 \{[0]_{N-m}[\mu_{1}-1]_{m_{1}},\ldots,[\mu_{l}-1]_{m_{l}}\}
\]
at $x=0$.
\end{enumerate}
\end{prop}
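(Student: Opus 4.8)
The plan is to deduce this from Proposition \ref{oouue} by a sign change, instead of repeating the computation. Let $\sigma$ be the algebra automorphism of $W[x]$ determined by $x\mapsto -x$, $\partial\mapsto -\partial$; it is well defined because $[-\partial,-x]=1$, and it is an involution. Checking on generators gives $\mathcal{L}^{-1}=\mathcal{L}\circ\sigma$ (indeed $\mathcal{L}^2=\sigma$), since
\[
(\mathcal{L}\circ\sigma)(x)=\mathcal{L}(-x)=\partial,\qquad (\mathcal{L}\circ\sigma)(\partial)=\mathcal{L}(-\partial)=-x.
\]
Consequently $\mathcal{L}^{-1}P(x,\partial)=P(\partial,-x)=\mathcal{L}(\sigma P)$, so the inverse transform of $P$ is literally the forward transform of $\sigma P$.

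First I would record that $\sigma$ preserves every piece of data in the statement. Plainly $\mathrm{ord}\,\sigma P=\mathrm{ord}\,P$ and $\deg\sigma P=\deg P=N$, and since $\sigma$ sends $x=c$ to $x=-c$ it fixes the point $x=0$ together with its regularity. For the exponents, if $u(x)$ solves $P(x,\partial)u=0$ then $u(-x)$ solves $(\sigma P)u=0$; applying this to the solutions $x^{-\mu_i-j}+x^{-\mu_i-m_i}h_{ij}(x)$ from Proposition \ref{strictly normal} shows that $\sigma P$ admits solutions of the same shape with the same leading exponents $\mu_i+j$ and the same multiplicities, the substitution only introducing nonzero constants $(-1)^{-\mu_i-j}$ and replacing $h_{ij}(x)$ by $h_{ij}(-x)\in\C[[x^{-1}]]$. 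Hence $\sigma P$ has semi-simple exponents $\{[\mu_1]_{m_1},\ldots,[\mu_l]_{m_l}\}$ at $x=\infty$ exactly when $P$ does.

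With these facts the proof is immediate. Condition 1 for $P$ holds iff $\sigma P$ has the same semi-simple exponents at infinity, and since $\deg\sigma P=N$ and the $\mu_i$ are unchanged, Proposition \ref{oouue} applied to $\sigma P$ says this is equivalent to $\mathcal{L}(\sigma P)=P(\partial,-x)$ having a regular singular point at $x=0$ with semi-simple exponents $\{[0]_{N-m},[\mu_1-1]_{m_1},\ldots,[\mu_l-1]_{m_l}\}$. That is exactly Condition 2 for $P$, and both implications come at once from the ``iff'' in Proposition \ref{oouue}.

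The only genuine content is the claim that $\sigma$ preserves semi-simple exponents at infinity, which is where I expect the main (mild) obstacle to lie: one must check that the $\log$-free solution shape and the non-integrality conditions $\mu_i-\mu_j\notin\Z$ survive $x\mapsto -x$. If one prefers to avoid the solution-level argument, the same conclusion follows by running the bookkeeping of Proposition \ref{oouue} verbatim: writing $P=\sum_s x^{\rho-s}P_s(\partial)$ one has $\sigma P=\sum_s(-1)^{\rho-s}x^{\rho-s}P_s(-\partial)$, so the vanishing conditions $P_i^{(j)}(0)=0$ for $i+j<m$ and the roots of the characteristic equation are unchanged up to harmless signs, and the explicit evaluation of $\mathcal{L}^{-1}P$ on $x^{\mu}\sum_s d_s x^s$ reproduces the factors $(-\mu-s+k)_{(N-k)}$ and $p_k(-\mu-1-s,0)$ exactly as before, yielding the block $[0]_{N-m}$ together with the shifted blocks $[\mu_i-1]_{m_i}$.
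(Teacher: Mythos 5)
Your proposal is correct and is essentially the paper's own proof: the paper likewise reduces the statement to Proposition \ref{oouue} by observing that condition 1 holds for $P(x,\partial)$ if and only if it holds for $P(-x,-\partial)=\sigma P$, whose forward Fourier--Laplace transform is exactly $P(\partial,-x)$. The only difference is that you spell out the identity $\mathcal{L}^{-1}=\mathcal{L}\circ\sigma$ and justify the invariance of semi-simple exponents at infinity under $x\mapsto -x$, a step the paper asserts without proof.
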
 
\begin{proof}
The condition 1 is equivalent to that $P(-x,-\partial)$ has semi-simple exponents
\[
 \{[\mu_{1}]_{m_{1}},\ldots,[\mu_{l}]_{m_{l}}\}
\]
at $x=\infty$. This is equivalent to that $P(\partial,-x)$ has
 semi-simple exponents
 \[
 \{[0]_{N-m}[\mu_{1}-1]_{m_{1}},\ldots,[\mu_{l}-1]_{m_{l}}\}
\]
at $x=0$ by Proposition \ref{oouue}. 
\end{proof}
\begin{cor}\label{laplace transform of rank1}
Let us take  $P(x,\partial)\in W[x]$ of $\deg P=N$ and $\mu_{1},\ldots,\mu_{l}\in \C$ such that $\mu_{i}\notin \Z$ and $\mu_{i}-\mu_{j}\notin \Z$ $(i\neq j)$. Then the followings are
 equivalent.
\begin{enumerate}
\item The differential equation $P(x,\partial)u=0$ has  $e^{\alpha x}$-twisted semi-simple exponents
\[
 \{[\mu_{1}]_{m_{1}},\ldots,[\mu_{l}]_{m_{l}}\}.
\]
\item For the Laplace transform $P(-\partial,x)$ of $P(x,\partial)$ has a
 regular singular point at $x=\alpha$ and  semi-simple exponents
\[
 \{[0]_{N-m}[\mu_{1}-1]_{m_{1}},\ldots,[\mu_{l}-1]_{m_{l}}\}
\]
at $x=\alpha$. Here $m=\sum_{i=1}^{l}m_{i}.$
\end{enumerate}
\end{cor}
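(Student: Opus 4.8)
The plan is to deduce this from Proposition \ref{oouue} by unwinding the definition of $e^{\alpha x}$-twisted exponents and then tracking how the Fourier-Laplace transform interacts with the constant shift $\partial\mapsto\partial+\alpha$. First I would unwind condition~1. By the definition of $e^{p(x)}$-twisted semi-simple exponents applied to $p(x)=\alpha x$, so that $p'(x)=\alpha$, condition~1 holds if and only if the operator $P(x,\partial+\alpha)$ has (ordinary) semi-simple exponents $\{[\mu_1]_{m_1},\ldots,[\mu_l]_{m_l}\}$ at $x=\infty$. This reduces the corollary to an application of Proposition~\ref{oouue} to $P(x,\partial+\alpha)$, provided the relevant degree is unchanged.

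The one bookkeeping point I would settle is that $\deg P(x,\partial+\alpha)=\deg P=N$, since this is what fixes the multiplicity $N-m$ of the exponent $0$ in the conclusion. The clean way is the order/degree interchange under $\mathcal{L}$: because $\mathcal{L}$ sends each monomial $x^k\partial^i$ to $(-\partial)^k x^i$ and normal-ordering only produces terms of strictly smaller bidegree, one has $\deg(\mathcal{L}Q)=\mathrm{ord}\,Q$ and $\mathrm{ord}(\mathcal{L}Q)=\deg Q$ for every $Q\in W[x]$. Applying this gives
\[
\deg P(x,\partial+\alpha)=\mathrm{ord}\,\mathcal{L}\bigl(P(x,\partial+\alpha)\bigr)=\mathrm{ord}\,P(-\partial,x+\alpha)=N,
\]
the last equality because the powers of $\partial$ in $P(-\partial,x+\alpha)$ arise only from the substitution $x\mapsto-\partial$ (the other slot becomes $x+\alpha$, which contains no $\partial$), and the highest such power equals $\deg_x P=N$. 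With the degree confirmed, Proposition~\ref{oouue} applied to $P(x,\partial+\alpha)$ yields that condition~1 is equivalent to $\mathcal{L}(P(x,\partial+\alpha))$ having a regular singular point at $x=0$ with semi-simple exponents $\{[0]_{N-m},[\mu_1-1]_{m_1},\ldots,[\mu_l-1]_{m_l}\}$.

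Finally I would identify $\mathcal{L}(P(x,\partial+\alpha))$ as a translate of $P(-\partial,x)$. Since $\mathcal{L}$ sends $x\mapsto-\partial$ and $\partial\mapsto x$, we have $\mathcal{L}(P(x,\partial+\alpha))=P(-\partial,x+\alpha)$. Writing $R(x,\partial)=P(-\partial,x)$ for the Laplace transform appearing in the statement, the $x$-variable of $R$ occupies exactly the second slot of $P$, so $P(-\partial,x+\alpha)=R(x+\alpha,\partial)$. Hence the parallel displacement $x\mapsto x+\alpha$ carries the regular singular point of $R(x+\alpha,\partial)$ at $x=0$ to a regular singular point of $R(x,\partial)=P(-\partial,x)$ at $x=\alpha$, with the characteristic exponents unchanged, as these are local invariants preserved by translation. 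This is precisely condition~2, and the equivalence goes through in both directions because every step (the unwinding of the twist, Proposition~\ref{oouue}, and the translation) is reversible.

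The conceptual content is simply that, under $\mathcal{L}$, the $e^{\alpha x}$-factor becomes a shift of the singular point by $\alpha$, so the substance is Proposition~\ref{oouue}; the step needing genuine care is the degree identity $\deg P(x,\partial+\alpha)=N$, together with keeping the two-slot substitution notation consistent through $\mathcal{L}$ and the displacement. I expect no deeper obstacle, since the distinctness hypotheses $\mu_i\notin\Z$ and $\mu_i-\mu_j\notin\Z$ are exactly what make $\{[0]_{N-m},[\mu_i-1]_{m_i}\}$ a legitimate system of semi-simple exponents at a regular point, matching the hypotheses of Proposition~\ref{oouue}.
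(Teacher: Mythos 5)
Your proof is correct and follows essentially the same route as the paper's: unwind the $e^{\alpha x}$-twist to reduce condition~1 to a statement about $P(x,\partial+\alpha)$ at $x=\infty$, apply Proposition~\ref{oouue}, and then translate the resulting regular singular point of $P(-\partial,x+\alpha)$ from $x=0$ to the point $x=\alpha$ of $P(-\partial,x)$. The only difference is that you explicitly verify the degree identity $\deg P(x,\partial+\alpha)=N$ (needed so that the multiplicity $N-m$ of the exponent $0$ comes out right), a bookkeeping point the paper leaves implicit.
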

\begin{proof}
The condition 1 is equivalent to that
$P(x,\partial+\alpha)u=0$ has semi-simple exponents
\[
  \{[\mu_{1}]_{m_{1}},\ldots,[\mu_{l}]_{m_{l}}\}
\]
at $x=\infty$.
 Hence it is equivalent to that the Laplace
 transform $P(-\partial,x+\alpha)u=0$ has a regular singular point at
 $x=0$, i.e., $\mathcal{L}P(x,\partial)=P(-\partial,x)$ has a regular
 singular point at $x=\alpha$ and semi-simple exponents 
\[
  \{[0]_{N-n}[\mu_{1}-1]_{m_{1}},\ldots,[\mu_{l}-1]_{m_{l}}\}
\] 
at $x=\alpha$.
\end{proof}
For the inverse transform we can show the following as well.
\begin{cor}\label{laplace inverse transform of rank1}
Let us take  $P(x,\partial)\in W[x]$ of $\deg P=N$ and $\mu_{1},\ldots,\mu_{l}\in \C$ such that $\mu_{i}\notin \Z$ and $\mu_{i}-\mu_{j}\notin \Z$ $(i\neq j)$. Then the followings are
 equivalent.
\begin{enumerate}
\item The differential equation $P(x,\partial)u=0$ has  $e^{\alpha x}$-twisted semi-simple exponents
\[
 \{[\mu_{1}]_{m_{1}},\ldots,[\mu_{l}]_{m_{l}}\}.
\]
\item For the Fourier-Laplace inverse transform $P(\partial,-x)$ of $P(x,\partial)$ has
a regular singular point at $x=-\alpha$ and  semi-simple exponents
\[
 \{[0]_{N-m}[\mu_{1}-1]_{m_{1}},\ldots,[\mu_{l}-1]_{m_{l}}\}
\]
at $x=-\alpha$. Here $m=\sum_{i=1}^{l}m_{i}.$
\end{enumerate}
\end{cor}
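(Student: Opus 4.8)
The plan is to follow the proof of Corollary \ref{laplace transform of rank1} verbatim in structure, replacing the forward Fourier-Laplace transform by its inverse and tracking the attendant sign. First I would unwind condition~1 using the definition of $e^{\alpha x}$-twisted semi-simple exponents: since $p(x)=\alpha x$ satisfies $p'(x)=\alpha$, statement~1 is \emph{by definition} equivalent to the assertion that the shifted operator $P(x,\partial+\alpha)$ has ordinary semi-simple exponents $\{[\mu_{1}]_{m_{1}},\ldots,[\mu_{l}]_{m_{l}}\}$ at $x=\infty$.

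Next I would apply Proposition \ref{laplace inverse transform of regular point} to $P(x,\partial+\alpha)$ in place of $P(x,\partial)$. Its Fourier-Laplace inverse transform is obtained by substituting $x\mapsto\partial$ and $\partial\mapsto-x$, which sends $\partial+\alpha$ to $-x+\alpha$; hence the transform is $P(\partial,-x+\alpha)$. The proposition then yields, in both directions, that $P(\partial,-x+\alpha)$ has a regular singular point at $x=0$ with semi-simple exponents $\{[0]_{N-m},[\mu_{1}-1]_{m_{1}},\ldots,[\mu_{l}-1]_{m_{l}}\}$ there.

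Finally I would identify $P(\partial,-x+\alpha)$ as the translate of the inverse transform $P(\partial,-x)$ under $x\mapsto x-\alpha$: writing $R(x)=P(\partial,-x)$ we have $R(x-\alpha)=P(\partial,-(x-\alpha))=P(\partial,-x+\alpha)$, so a regular singular point of $P(\partial,-x+\alpha)$ at $x=0$ corresponds to one of $P(\partial,-x)$ at $x=-\alpha$. Since translation preserves the local characteristic exponents — these being read off from the Taylor coefficients $f^{c}_{k}$ at the singular point, which transform covariantly under $x\mapsto x-\alpha$ — condition~2 follows, and conversely. Chaining the three equivalences gives the statement.

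The only point needing a little care, and hence the main (minor) obstacle, is checking that the passage $P\rightsquigarrow P(x,\partial+\alpha)$ leaves the degree $N$ unchanged, since $N$ governs the size of the block $[0]_{N-m}$. Writing $Q(\partial)=\sum_{i}q_{i}\partial^{i}$ for the top-degree (coefficient-of-$x^{N}$) part of $P$, the $x^{N}$-coefficient of the $\partial^{j}$-term of $P(x,\partial+\alpha)$ equals $\tfrac{1}{j!}Q^{(j)}(\alpha)$, and these cannot all vanish unless $Q\equiv0$; since some $q_{i}\neq0$, we conclude $\deg P(x,\partial+\alpha)=N$, so Proposition \ref{laplace inverse transform of regular point} indeed applies with the same $N$.
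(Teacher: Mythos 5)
Your proposal is correct and follows essentially the same route as the paper: the paper proves this corollary by the "same argument as Corollary \ref{laplace transform of rank1}", namely unwinding the $e^{\alpha x}$-twist to pass to $P(x,\partial+\alpha)$, applying the regular-singular-point equivalence for the (inverse) Fourier-Laplace transform (Proposition \ref{laplace inverse transform of regular point}), and reading off the translated singular point at $x=-\alpha$, exactly as you do. Your added verification that $\deg P(x,\partial+\alpha)=\deg P$ is a small explicit check the paper leaves implicit, but it does not change the nature of the argument.
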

\begin{proof}
We can show this by the same argument as Corollary \ref{laplace
 transform of rank1}.
\end{proof}
\begin{cor}\label{laplace transform of rank2}
Let us take  $P(x,\partial)\in W[x]$ of $\deg P=N$ and $\mu_{1},\ldots,\mu_{l}\in \C$ such that $\mu_{i}\notin \Z$ and $\mu_{i}-\mu_{j}\notin \Z$ $(i\neq j)$. Then the followings are
 equivalent.
\begin{enumerate}
\item The differential equation $P(x,\partial)u=0$ of  has       $e^{\frac{\alpha}{2}x^{2}+\beta x}$-twisted semi-simple exponents
\[
 \{[\mu_{1}]_{m_{1}},\ldots,[\mu_{l}]_{m_{l}}\}.
\]
\item The Laplace transform $P(-\partial,x)u=0$ has $e^{-\frac{1}{2\alpha}x^{2}+\frac{\beta}{\alpha} x}$-twisted semi-simple exponents
\[
 \{[\mu_{1}]_{m_{1}},\ldots,[\mu_{l}]_{m_{l}}\}.
\]
\end{enumerate}
\end{cor}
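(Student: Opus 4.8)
The plan is to remove the exponential twists on both sides using the definition of $e^{p(x)}$-twisted semi-simple exponents, reducing each condition to a statement about ordinary semi-simple exponents at $x=\infty$ of an honest element of $W[x]$, and then to connect the two resulting operators by a single explicit automorphism of the Weyl algebra. Put $p(x)=\frac{\alpha}{2}x^{2}+\beta x$ and $\tilde p(x)=-\frac{1}{2\alpha}x^{2}+\frac{\beta}{\alpha}x$. By definition, condition 1 says precisely that $Q_{1}:=\mathrm{Ade}(-p(x))P=P(x,\partial+\alpha x+\beta)$ has the semi-simple exponents $\{[\mu_{1}]_{m_{1}},\ldots,[\mu_{l}]_{m_{l}}\}$ at infinity, since multiplying a normal solution $e^{p(x)}x^{-\mu}(1+\cdots)$ of $Pu=0$ by $e^{-p(x)}$ leaves the normal-type series $x^{-\mu}(1+\cdots)$; likewise condition 2 says that $Q_{2}:=\mathrm{Ade}(-\tilde p(x))\mathcal{L}P$ has these same exponents at infinity. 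Both $Q_{1},Q_{2}$ lie in $W[x]$, so the notion of semi-simple exponents from Proposition \ref{strictly normal} applies to each; note that this notion counts only the $m=\sum m_{i}$ normal-type solutions, so it remains meaningful even though $Q_{1}$ and $Q_{2}$ will in general have different orders.

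Since $\mathrm{Ade}(p)=\mathrm{Ade}(-p)^{-1}$ we have $P=\mathrm{Ade}(p)Q_{1}$, hence $Q_{2}=\Psi(Q_{1})$ with $\Psi=\mathrm{Ade}(-\tilde p)\circ\mathcal{L}\circ\mathrm{Ade}(p)$. First I would evaluate $\Psi$ on generators; a direct substitution, in which the cross terms cancel exactly by the completion of the square sending $p$ to $\tilde p$, yields the automorphism $\partial\mapsto\alpha\partial,\ x\mapsto\frac{1}{\alpha}x-\partial-\frac{\beta}{\alpha}$. Next I would factor it as $\Psi=T_{\beta}\circ\Sigma\circ D_{\alpha}$, where $D_{\alpha}$ is the scaling $x\mapsto\frac{1}{\alpha}x,\ \partial\mapsto\alpha\partial$, where $T_{\beta}$ is the translation $x\mapsto x-\beta,\ \partial\mapsto\partial$, and where $\Sigma$ is the shear $x\mapsto x-\alpha\partial,\ \partial\mapsto\partial$, that is, conjugation by $e^{\frac{\alpha}{2}\partial^{2}}$. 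The scaling $D_{\alpha}$ only rescales each monomial $x^{-\nu}$ and so preserves the semi-simple exponents at infinity together with the block structure $[\mu_{i}]_{m_{i}}$, while the translation $T_{\beta}$ preserves the exponents at infinity by the same elementary Taylor-expansion argument used in the proofs of Proposition \ref{oouue} and Proposition \ref{addition and exponents}.

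The statement therefore reduces to the claim that the Gaussian shear $\Sigma$ carries a normal-type operator with semi-simple exponents $\{[\mu_{i}]_{m_{i}}\}$ at infinity to one with the same exponents, possibly of larger order, the extra solutions being purely irregular and hence invisible to the count. This is the algebraic avatar of the classical fact that the Laplace transform sends $x^{-\mu}e^{\frac{\alpha}{2}x^{2}}$ to $y^{-\mu}e^{-\frac{1}{2\alpha}y^{2}}$ up to lower-order factors, i.e. the steepest-descent evaluation at the critical point $x_{\ast}=\frac{y-\beta}{\alpha}$, and I expect it to be the main obstacle. To prove it inside the present framework I would feed the formal solutions $x^{-\mu_{i}-j}(1+\cdots)$ of $Q_{1}u=0$ supplied by Proposition \ref{strictly normal} into the indicial recursion $(\ref{char})$ and track how that recursion transforms under $\Sigma$, exactly as the recursion was transformed under $\mathcal{L}$ in the proof of Proposition \ref{oouue}: the computation should show that the leading index $-\mu_{i}-j$ survives unchanged while only the subleading coefficients are rearranged, so that $\Sigma(Q_{1})$ admits a normal-type solution with the same leading exponent. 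Granting this, $Q_{2}=\Psi Q_{1}$ has the same semi-simple exponents at infinity as $Q_{1}$, which is the desired equivalence; both implications follow at once because $\Psi$ is invertible and $\Psi^{-1}$ is again of the same type, namely a Gaussian shear with $-\alpha$ together with a scaling and a translation.
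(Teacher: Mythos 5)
Your reduction steps are correct and can be verified directly: untwisting both conditions does give $Q_{1}=P(x,\partial+\alpha x+\beta)$ and $Q_{2}=\mathrm{Ade}(-\tilde p)\mathcal{L}P$, the automorphism $\Psi=\mathrm{Ade}(-\tilde p)\circ\mathcal{L}\circ\mathrm{Ade}(p)$ does act on generators by $\partial\mapsto\alpha\partial$, $x\mapsto\frac{1}{\alpha}x-\partial-\frac{\beta}{\alpha}$, and the factorization $\Psi=T_{\beta}\circ\Sigma\circ D_{\alpha}$ is consistent. The problem is that everything you have actually proved is the easy part. The entire substance of the corollary is concentrated in the one claim you leave open, namely that the shear $\Sigma\colon x\mapsto x-\alpha\partial,\ \partial\mapsto\partial$ preserves semi-simple exponents at infinity; your treatment of it is ``I would feed the formal solutions into the recursion \ldots the computation should show \ldots Granting this''. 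That is a genuine gap, not a routine verification: unlike $\mathrm{Ad}((x-c)^{\nu})$ or $\mathrm{Ade}(q)$, the shear is not multiplication by any function, so it does not act on formal solutions in any visible way, and it destroys the grading on which the recursion (\ref{char}) is built --- a single monomial $x^{k}$ is sent to $(x-\alpha\partial)^{k}$, which scatters over the weights $k,k-2,\ldots,-k$, so each graded piece $x^{\rho-s}P_{s}(\partial)$ of $Q_{1}$ contaminates all lower weights of $\Sigma(Q_{1})$. ``Tracking the recursion'' therefore amounts to reproving a Laplace-transform statement of the same depth as Proposition \ref{oouue} from scratch, and you have not done it.

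The missing idea is available inside the paper's own toolkit, and it is exactly how the paper argues: the shear is Fourier--Laplace conjugate to an exponential twist, $\Sigma=\mathcal{L}^{-1}\circ\mathrm{Ade}(-\frac{\alpha}{2}x^{2})\circ\mathcal{L}$ (equivalently, the paper writes its connecting automorphism directly in the form $\mathcal{L}\circ\mathrm{Ade}(\frac{1}{2\alpha}y^{2}-\frac{\beta}{\alpha}y)\circ\mathcal{L}^{-1}$ applied to $P(y-\frac{1}{\alpha}\partial_{y}-\frac{\beta}{\alpha},\alpha y)$, after the scaling $x=\alpha y$). With this rewriting, the invariance of exponents becomes trivial: by Proposition \ref{oouue} (and Proposition \ref{laplace inverse transform of regular point}), semi-simple exponents at $x=\infty$ of an operator correspond to semi-simple exponents at the finite \emph{regular} singular point $x=0$ of its Laplace transform; there, $\mathrm{Ade}$ of a polynomial is conjugation by the entire, nonvanishing function $e^{q(x)}$, which does not move exponents at $x=0$ (the argument of Proposition \ref{addition and exponents}); transforming back gives the claim. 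So your decomposition into scaling, translation and shear is a legitimate alternative organization, but to close the proof you must replace your sketched recursion computation by this conjugation-through-$\mathcal{L}$ argument --- at which point your proof collapses into the paper's.
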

\begin{proof}
The condition 1 is equivalent to that $P(x,\partial+\alpha x+\beta)u=0$ has
 semi-simple  exponents
 \[
 \{[\mu_{1}]_{m_{1}},\ldots,[\mu_{l}]_{m_{l}}\}.
\]
at $x=\infty$. On the
 other hand, the condition 2 is equivalent to that $P(-\partial+\frac{1}{\alpha}x-\frac{\beta}{\alpha},x)u=0$ has
 the same semi-simple exponents at infinity. If we
 put $x=\alpha y$, it is equivalent to say that
 $P(y-\frac{1}{\alpha}\partial_{y}-\frac{\beta}{\alpha},\alpha y)v=0$ has
 the same semi-simple exponents at infinity. Here $\partial_{y}=\frac{d}{dy}$. It is easy to see that 
\begin{align*}
\mathcal{L}\circ
 \mathrm{Ade}((\frac{1}{2\alpha}y^{2}-\frac{\beta}{\alpha}y))\circ
 \mathcal{L}^{-1}P(y-\frac{1}{\alpha}\partial_{y}-\frac{\beta}{\alpha},\alpha
 y)=P(y,\partial_{y}+\alpha y+\beta).
\end{align*}
If we notice that for a solution $u$ of $Q(x,\partial)u=0$, $v=e^{p(x)}u$
 is a solution of $\mathrm{Ade}(p(x))Q(x,\partial)v=0$. Since $e^{p(x)}$
 is holomorphic at $x=0$, the multiplication of $e^{p(x)}$ does not
 change exponents at $x=0$. Then the equivalence 1 and 2  follows from Proposition \ref{oouue}. 
\end{proof}
\begin{cor}\label{laplace inverse transform of rank2}
Let us take  $P(x,\partial)\in W[x]$ of $\deg P=N$ and $\mu_{1},\ldots,\mu_{l}\in \C$ such that $\mu_{i}\notin \Z$ and $\mu_{i}-\mu_{j}\notin \Z$ $(i\neq j)$. Then the followings are
 equivalent.
\begin{enumerate}
\item The differential equation $P(x,\partial)u=0$ has       $e^{\frac{\alpha}{2}x^{2}+\beta x}$-twisted semi-simple exponents
\[
 \{[\mu_{1}]_{m_{1}},\ldots,[\mu_{l}]_{m_{l}}\}.
\]
\item The Fourier-Laplace inverse transform $P(\partial,-x)u=0$ has $e^{-\frac{1}{2\alpha}x^{2}-\frac{\beta}{\alpha} x}$-twisted semi-simple exponents
\[
 \{[\mu_{1}]_{m_{1}},\ldots,[\mu_{l}]_{m_{l}}\}.
\]
\end{enumerate}
\end{cor}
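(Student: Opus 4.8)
The plan is to follow the proof of Corollary \ref{laplace transform of rank2} verbatim, replacing the forward Fourier-Laplace transform by its inverse and tracking the resulting sign changes. First I would unwind the definition of $e^{p(x)}$-twisted semi-simple exponents on both sides. Since $\frac{d}{dx}(\frac{\alpha}{2}x^{2}+\beta x)=\alpha x+\beta$, condition 1 is equivalent to the assertion that $P(x,\partial+\alpha x+\beta)u=0$ has semi-simple exponents $\{[\mu_{1}]_{m_{1}},\ldots,[\mu_{l}]_{m_{l}}\}$ at $x=\infty$; this is literally condition 1 of Corollary \ref{laplace transform of rank2}. For condition 2 we have the inverse transform $\mathcal{L}^{-1}P(x,\partial)=P(\partial,-x)$, and since $\frac{d}{dx}(-\frac{1}{2\alpha}x^{2}-\frac{\beta}{\alpha}x)=-\frac{1}{\alpha}x-\frac{\beta}{\alpha}$, condition 2 is equivalent to the assertion that
\[
P(\partial-\tfrac{1}{\alpha}x-\tfrac{\beta}{\alpha},-x)u=0
\]
has the same semi-simple exponents at $x=\infty$.

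Next I would perform the linear substitution $x=-\alpha y$, under which $\partial_{x}=-\frac{1}{\alpha}\partial_{y}$ and hence
\[
P(\partial_{x}-\tfrac{1}{\alpha}x-\tfrac{\beta}{\alpha},-x)=P(y-\tfrac{1}{\alpha}\partial_{y}-\tfrac{\beta}{\alpha},\alpha y).
\]
This is exactly the operator produced in the proof of Corollary \ref{laplace transform of rank2} by its substitution $x=\alpha y$; the sign $-\alpha$ here, as opposed to $+\alpha$ there, is precisely what absorbs the sign difference in the $\beta$-term of the two statements. Because a nonzero linear scaling sends a normal formal solution $x^{-\mu}\sum_{s}c_{s}x^{-s}$ at infinity to a series of the same form in $y$ with the same exponent $\mu$, it leaves the semi-simple exponent datum at $x=\infty$ unchanged. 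Thus condition 2 is equivalent to $P(y-\frac{1}{\alpha}\partial_{y}-\frac{\beta}{\alpha},\alpha y)v=0$ having semi-simple exponents $\{[\mu_{1}]_{m_{1}},\ldots,[\mu_{l}]_{m_{l}}\}$ at infinity, which is condition 2 of Corollary \ref{laplace transform of rank2}.

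Having matched both conditions with the respective conditions of Corollary \ref{laplace transform of rank2} applied to the same pair of operators, the equivalence of condition 1 and condition 2 follows at once from that corollary. For completeness I would also recall the mechanism behind it: the algebraic identity
\[
\mathcal{L}\circ\mathrm{Ade}(\tfrac{1}{2\alpha}y^{2}-\tfrac{\beta}{\alpha}y)\circ\mathcal{L}^{-1}\,P(y-\tfrac{1}{\alpha}\partial_{y}-\tfrac{\beta}{\alpha},\alpha y)=P(y,\partial_{y}+\alpha y+\beta)
\]
shows that the two inverse transforms $\mathcal{L}^{-1}P(y,\partial_{y}+\alpha y+\beta)$ and $\mathcal{L}^{-1}P(y-\frac{1}{\alpha}\partial_{y}-\frac{\beta}{\alpha},\alpha y)$ differ only by $\mathrm{Ade}(\frac{1}{2\alpha}y^{2}-\frac{\beta}{\alpha}y)$, i.e.\ by multiplication of solutions by the function $e^{\frac{1}{2\alpha}y^{2}-\frac{\beta}{\alpha}y}$, which is holomorphic and nonvanishing at $y=0$ and so preserves the characteristic exponents there; Proposition \ref{oouue} (equivalently Proposition \ref{laplace inverse transform of regular point}) then translates these exponents at $y=0$ back into semi-simple exponents at infinity on each side. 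The only delicate point in the whole argument is sign bookkeeping: keeping straight how the inverse transform $P(\partial,-x)$, the substitution $x=-\alpha y$, and the quadratic twist jointly produce the signs of the linear and quadratic terms, and checking that the scaling genuinely fixes the exponents at infinity. Once these are organized the proof is a faithful transcription of that of Corollary \ref{laplace transform of rank2}.
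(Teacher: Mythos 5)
Your proposal is correct and takes essentially the same route as the paper: the paper's own proof performs the identical unwinding of the twisted-exponent definitions and the same substitution $y=-\frac{1}{\alpha}x$, arriving at the operator $P(-\frac{1}{\alpha}\partial_{y}+y-\frac{\beta}{\alpha},\alpha y)$, and then concludes ``as in Corollary \ref{laplace transform of rank2}.'' Your only (harmless) reorganization is to cite that corollary as a black box after matching both of its conditions to the present ones, rather than restating its conjugation identity in the inverse form $\mathcal{L}^{-1}\circ\mathrm{Ade}(\frac{1}{2\alpha}y^{2}+\frac{\beta}{\alpha}y)\circ\mathcal{L}$ as the paper does.
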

\begin{proof}
The condition 2 is equivalent to that
 $P(\partial-\frac{1}{\alpha}x-\frac{\beta}{\alpha},-x)$ has the above
 semi-simple exponents at $x=\infty$. If we put $y=-\frac{1}{\alpha}x$,
 this is equivalent to
 $P(-\frac{1}{\alpha}\partial_{y}+y-\frac{\beta}{\alpha},\alpha y)$ has
 the same exponents at $y=\infty$. Also  we have
\[
 \mathcal{L}^{-1}\circ\mathrm{Ade}(\frac{1}{2\alpha}y^{2}+\frac{\beta}{\alpha})\circ
 \mathcal{L}P(-\frac{1}{\alpha}\partial_{y}+y-\frac{\beta}{\alpha},\alpha
 y)=P(y,\partial_{y}+\alpha y+\beta).
\]
As in Corollary \ref{laplace transform of rank2}, we can show this corollary.
\end{proof}
\begin{prop}\label{inverse of euler transform}
Let us take $P(x,\partial)\in W[x]$. We assume that $P(x,\partial)$ can be written by
\[
\mathrm{R}P(x,\partial)=x^{N}\prod_{i=0}^{r}(\partial-\alpha_{i})^{m_{i}}+\sum_{j=1}^{N-1}x^{N-j}\prod_{i}^{r}(\partial-\alpha_{i})^{\max{\{m_{i}-j,0\}}}P_{j}(\partial)
\]
for $P_{i}(x)\in \C[x]$
and 
$P(x,\partial)$ has $e^{\alpha_{i}x}$-twisted semi-simple exponents
\[
\{[\mu^{i}_{1}]_{m^{i}_{1}},\ldots,[\mu^{i}_{l_{i}}]_{m^{i}_{l_{i}}}\}
\]
where $m_{i}=\sum_{j=1}^{l_{i}}m_{j}^{i}$. Moreover we assume that $\mu^{i}_{j}\notin \Z$ for $i=1,\ldots,r$ and $j=1,\ldots,l_{i}$ and $\mu^{i}_{j}-\mu^{i}_{k}\notin\Z$ $(k\neq j)$.

Then  we have the followings.
\begin{enumerate}
\item We have
\[
E(*;0)P(x,\partial)\sim P(x,\partial)
\]
for $i=1,\ldots,r$.
\item For fixed $i\in\{1,\ldots,r\}$ and  $\mu\in\C$ such that
      $\mu\notin \Z$ and $\mu^{i}_{j}-\mu\notin \Z\backslash\{1\}$ for all
      $j=1,\ldots l_{i}$,  we have
\[
E(\alpha_{i};-\mu)E(\alpha_{i};\mu)P(x,\partial)\sim P(x,\partial)
\]
for $i=1,\ldots,r$.
\end{enumerate}
\end{prop}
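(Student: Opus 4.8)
The plan is to follow the template of Proposition \ref{mcirred}, treating the first assertion as the essential point and deducing the second from it formally. For the first assertion, put $Q=E(*,0)P=\mathcal{L}\,\mathrm{R}\,\mathcal{L}^{-1}\,\mathrm{R}P$. Exactly as in the proof of Proposition \ref{mcirred}, the identity $\mathrm{R}\mathcal{L}^{-1}\mathrm{R}P=\mathcal{L}^{-1}Q$ produces a polynomial $g(x)\in\C[x,\xi]$ with $\mathcal{L}^{-1}\mathrm{R}P=g(x)\,\mathcal{L}^{-1}Q$, so that $P\sim g(-\partial)Q$. The whole assertion therefore reduces to showing that $g$ is a nonzero constant, i.e. that $\mathcal{L}^{-1}\mathrm{R}P$ is already reduced. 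Since $\mathrm{R}$ only divides out the greatest common divisor of the coefficients, and since the top coefficient of $\mathcal{L}^{-1}\mathrm{R}P$ is, up to a constant, $\prod_i(x+\alpha_i)^{m_i}$, it suffices to rule out a common factor $(x+\alpha_i)$ of all the coefficients.

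First I would pass to the Fourier dual and read off the local data. Applying $\mathcal{L}^{-1}$ to the explicit form of $\mathrm{R}P$ turns the term $x^{N-j}\prod_i(\partial-\alpha_i)^{\max\{m_i-j,0\}}P_j(\partial)$ into $\partial^{N-j}\prod_i(x+\alpha_i)^{\max\{m_i-j,0\}}P_j(-x)$, so the prescribed shape of $\mathrm{R}P$ is exactly a prescription of the vanishing orders of the coefficients of $\mathcal{L}^{-1}\mathrm{R}P$ at each $x=-\alpha_i$; I would make the resulting valuation statement precise by the reordering computation underlying Lemma \ref{equiv}. On the other hand, by Corollary \ref{laplace inverse transform of rank1} localized at $\alpha_i$, the regular singular point of $\mathcal{L}^{-1}\mathrm{R}P$ at $x=-\alpha_i$ carries the principal exponents $\mu^i_j-1$ of multiplicity $m^i_j$, and by Proposition \ref{reduce degree} the power of $(x+\alpha_i)$ that can be divided out is governed by how far the integer exponents there form a block $0,1,\dots,s-1$. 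The hypotheses $\mu^i_j\notin\Z$ and $\mu^i_j-\mu^i_k\notin\Z$ force these principal exponents to be non-integral and mutually non-congruent, so they cannot complete such a block; together with the vanishing orders coming from the explicit form this yields $s=0$ at every $\alpha_i$, whence $g$ is constant and $E(*,0)P\sim P$.

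For the second assertion I would reproduce the second half of the proof of Proposition \ref{mcirred}. The non-resonance conditions $\mu\notin\Z$ and $\mu^i_j-\mu\notin\Z\setminus\{1\}$ are imposed precisely so that $E(\alpha_i,\mu)P$ again meets the standing hypotheses: using the explicit changes of local data established in Section 4, one checks that $E(\alpha_i,\mu)P$ retains the reduced shape of the proposition with non-integral, mutually non-congruent principal exponents at infinity. Granting this, the first assertion applies to $E(\alpha_i,\mu)P$, giving $E(*,0)E(\alpha_i,\mu)P\sim E(\alpha_i,\mu)P$; inserting this occurrence of $E(*,0)=\mathcal{L}\,\mathrm{R}\,\mathcal{L}^{-1}\,\mathrm{R}$ between the two Euler transforms and using $\mathrm{RAd}((x+\alpha_i)^{\mu})\circ\mathrm{RAd}((x+\alpha_i)^{-\mu})Q\sim Q$ collapses the composition to $\mathcal{L}\,\mathcal{L}^{-1}\mathrm{R}P=\mathrm{R}P\sim P$. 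The main obstacle is exactly this form-preservation step: verifying that $E(\alpha_i,\mu)P$ still has the prescribed form, so that part (1) is applicable to it, is where the non-resonance hypotheses on $\mu$ are genuinely used and where the local-datum computations of Section 4 must be brought in.
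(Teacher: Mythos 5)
Your proposal takes essentially the same route as the paper's own proof: for (1) you reduce to showing that $\mathcal{L}^{-1}\mathrm{R}P$ is already reduced and establish this from Corollary~\ref{laplace inverse transform of rank1} together with Proposition~\ref{reduce degree}, exactly as the paper does, and for (2) you verify via the Section~4 local-data results and the non-resonance hypotheses on $\mu$ that $E(\alpha_{i},\mu)P$ again satisfies the standing hypotheses, apply (1) to it, and collapse $\mathrm{RAd}((x+\alpha_{i})^{\mu})\circ\mathrm{RAd}((x+\alpha_{i})^{-\mu})$ as in Proposition~\ref{mcirred}, which is precisely the paper's argument. The only differences are presentational: you route (1) through the factorization $P\sim g(-\partial)Q$ of Lemma~\ref{ueue} rather than concluding $E(*,0)P=\mathcal{L}\mathcal{L}^{-1}\mathrm{R}P=\mathrm{R}P\sim P$ directly, and you defer the explicit two-case exponent computation (whether $\mu^{i}_{j}-\mu=1$ for some $j$) that the paper carries out.
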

\begin{proof}
First we  show 1. We have
\begin{align*}
E(\alpha_{i};0)P(x,\partial)&=\mathcal{L}\mathrm{R}\mathcal{L}^{-1}\mathrm{R}P(x,\partial)\\
&=\mathcal{L}\mathrm{R}(\prod_{i=1}^{r}(-x-\alpha_{i})^{m_{i}}\partial^{N}\\&+\sum_{j=1}^{N}\prod_{i=1}^{r}(-x-\alpha_{i})^{\max\{m_{i}-j,0\}}a_{j}(x)\partial^{N-j}).
\end{align*}
By the assumption and Corollary \ref{laplace inverse transform of rank1}, every
 characteristic exponent at the regular singular point $x=-\alpha_{i}$ of
 $\mathcal{L}^{-1}\mathrm{R}P(x,\partial)$ is not integers. Hence by Proposition \ref{reduce degree}, $\mathrm{R}\mathcal{L}^{-1}\mathrm{R}P(x,\partial)= \mathcal{L}^{-1}\mathrm{R}P(x,\partial)$. Hence $E(*;0)P(x,\partial)=\mathcal{L}\mathrm{R}\mathcal{L}^{-1}\mathrm{R}P(x,\partial)\sim P(x,\partial).$

Let us show 2. Fix an $i\in\{1,\ldots r\}$. If $\mu^{i}_{j}-\mu\neq 1$ for all $j=1,\ldots,l_{i}$,
 $E(\alpha_{i},\mu)P(x,\partial)$ has $e^{\alpha_{i}x}$-twisted semi-simple exponents
\[
\{[-\mu+1]_{N-m_{i}}[\mu^{i}_{1}-\mu]_{m^{i}_{1}},\ldots,[\mu^{i}_{l_{i}}-\mu]_{m^{i}_{l_{i}}}\}
\]
and $e^{\alpha_{i'}x}$-twisted semi-simple exponents
\[
\{[\mu^{i'}_{1}]_{m^{i'}_{1}},\ldots,[\mu^{i'}_{l_{i'}}]_{m^{i'}_{l_{i'}}}\}
\]
for the other $i'$ by corollaries from $\ref{laplace transform of
 rank1}$ to $\ref{laplace inverse transform of rank2}$. On the other
 hand, if there exist $j\in\{1,\ldots,l_{i}\}$ such that
 $\mu^{i}_{j}-1=u$, then $E(\alpha_{i},\mu)P(x,\partial)$ has $e^{\alpha_{i}x}$-twisted semi-simple exponents
\[
\{[\mu^{i}_{1}-\mu]_{m^{i}_{1}},\ldots,[\mu^{i}_{j-1}-\mu]_{m^{i}_{j-1}},[-\mu+1]_{N-m_{i}},[\mu^{i}_{j+1}-\mu]_{m^{i}_{j+1}},\ldots,[\mu^{i}_{l_{i}}-\mu]_{m^{i}_{l_{i}}}\}
\]
and $e^{\alpha_{i'}x}$-twisted semi-simple exponents
\[
\{[\mu^{i'}_{1}]_{m^{i'}_{1}},\ldots,[\mu^{i'}_{l_{i'}}]_{m^{i'}_{l_{i'}}}\}
\]
for the other $i'$ by corollaries from $\ref{laplace transform of
 rank1}$ to $\ref{laplace inverse transform of rank2}$ In  both cases,
 characteristic exponents are not integers and moreover difference of
 them are not integers. Hence we can see 
\[
 E(*,0)E(\alpha_{i},\mu)P\sim E(\alpha_{i},\mu)P
\]
by the same argument as 1. Hence we have 2 as well as the proof of
 Proposition \ref{mcirred}. 

\end{proof}

\subsection{Twisted Euler transform}

\begin{df}[Normal at infinity]
Let us take $P(x,\partial)\in W[x,\xi]$ which has an irregular singular point at $x=\infty$ of rank 2. The order of $P(x,\partial)$ is $n$. 

We say that $P(x,\partial)$ is normal at the irregular singular point $x=\infty$, if the followings are satisfied. There exist $\alpha_{i},\beta^{i}_{j}\in\C$ for $i=1,\ldots,r$ and $j=1,.\ldots,l_{i}$.  
\begin{enumerate}
\item For every $i=1,\ldots,r$, the Newton polygon of $P(x,\partial+\alpha_{i}x)$ has only three
      vertices
      $s^{i}_{0}=(u^{i}_{0}=0,v^{i}_{0}),s^{i}_{1}=(u^{i}_{1},v^{i}_{1})$
      and $s^{i}_{3}=(u^{i}_{3}=n,v^{i}_{3})$. Corresponding slopes are
      $1$ and $2$. Length of the segment $[s^{i}_{1},s^{i}_{0}]$ is $n_{i}$. Here $n=\sum_{i=1}^{r}n_{i}$.
\item For every $(i,j)$, $i=1,\ldots,r$ and $j=1,\ldots,l_{i}$, the Newton polygon of $P(x,\partial+\alpha_{i}x+\beta^{i}_{j})$ has only four vertices $s^{ij}_{0}=(u^{ij}_{0}=0,v^{ij}_{0}),s^{ij}_{1}=(u^{ij}_{1},v^{ij}_{1}),s^{ij}_{2}=(u^{ij}_{2},v^{ij}_{2}),s^{ij}_{3}=(u^{ij}_{3}=n,v^{ij}_{3})$. Corresponding slopes are $0$, $1$ and $2$. Lengths of segments $[s^{ij}_{1},s^{ij}_{0}]$, $[s^{ij}_{2},s^{ij}_{1}]$ and $[s^{ij}_{3},s^{ij}_{2}]$ are $n^{i}_{j}$, $n_{i}-n^{i}_{j}$ and $n-n_{i}$ respectively. Here $n_{i}=\sum_{j=1}^{l_{i}}n^{i}_{j}$.
\end{enumerate}
\end{df}
\begin{rem}
By Proposition \ref{formal regular solutions}, Proposition \ref{frobenious method}
and Remark \ref{Newton polygon and formal solutions}, $P(x,\partial)\in
W[x,\xi]$ is normal at infinity if and only if there are
$n^{i}_{j}$-dimensional space of 
$e^{\frac{\alpha_{i}}{2}x^{2}+\beta^{i}_{j}x}$-twisted formal
solutions. Since $n=\sum_{i=1}^{r}\sum_{j=1}^{l_{i}}n^{i}_{j}$ coincides
with the order of $P(x,\partial)$, these are all of formal solutions of
$P(x,\partial)u=0$ at $x=\infty$.  \end{rem}

Let us take $P(x,\partial)\in W[x,\xi]$ which has regular singular points
 at $c_{1},\ldots,c_{p}\in\C$, the irregular singular point of rank 2 at $x=\infty$ and
 no other singular points. Then we can write
\begin{equation}\label{standard form}
 P(x,\partial)=\sum_{i=0}^{n}\prod_{j=1}^{p}(x-c_{j})^{n-i}a_{i}(x)\partial^{n-i},
\end{equation}
where $a_{i}(x)\in \C[x,\xi]$ for $i=1,\ldots,n$ and $a_{0}(x)=1$. Since
 the rank of irregularity is 2, degrees of $a_{i}(x)$ have upper bound,
\[
 \deg a_{i}(x)\le (p+1)i.
\] 
We call $(\ref{standard form})$ the standard form of $P(x,\partial)$.

\begin{df}[Table of local datum]\label{local datum}
For sufficiently large numbers $K$ and $K'$, we take  $\xi=\{\mu_{j}^{i}\mid 1\le i,j\le K\}\cup
 \{\nu^{ij}_{k}\mid 1\le i,j,k\le K'\}$ as the indeterminants of $\C(\xi)$.
Let us take $P(x,\partial)\in W[x,\xi]$ which has regular singular points
 at $c_{1},\ldots,c_{p}\in\C$, the irregular singular point of rank 2 at $x=\infty$ and
 no other singular points. Moreover $P(x,\partial)$ is normal at infinity. 
 
Let us assume that the differential equation $P(x,\partial)u=0$ has the
 following local solutions.
 
\begin{itemize}
\item Around each regular singular point $x=c_{i}$, it has semi-simple exponents
\[
\{[0]_{m_{0}^{i}}[\mu_{1}^{i}]_{m_{1}^{i}},\ldots,[\mu_{s_{i}}^{i}]_{m_{s_{i}}^{i}}\}
\] 
where $m_{j}^{i}\in\Z_{>0}$ for $i=1,\ldots,s_{i}$ and $m_{0}^{i}\in
      \Z_{\ge 0}$ which satisfy
	     $\sum_{j=0}^{s_{i}}m_{j}^{i}=n=\mathrm{ord}\,P$ for $i=1,\ldots,p$. For the
      simplicity, we put 
\begin{align*}
 &\mu^{i}=(\mu_{1}^{i},\ldots,\mu_{s_{i}}^{i}), &m^{i}=(m_{1}^{i},\ldots,m_{s_{i}}^{i}),
\end{align*}
and write
\[
 [\mu^i;m^{i}]=\{[\mu_{1}^{i}]_{m_{1}^{i}},\ldots,[\mu_{s_{i}}^{i}]_{m_{s_{i}}^{i}}\},
\]
shortly.
\item Around irregular singular point $x=\infty$, it has
      $e^{\frac{\alpha_{i}}{2}x^{2}+\beta_{j}x }$-twisted semi-simple exponents
\[
 \{[\nu^{ij}_{1}]_{n^{ij}_{1}},\ldots,[\nu^{ij}_{t_{ij}}]_{n^{ij}_{t_{ij}}}\}
\] 
for $i=1,\ldots,r$, $j=1,\ldots,l_{i}$. Here $n_{k}^{ij}\in \Z_{>0}$ and $\sum_{i=1}^{r}\sum_{j=1}^{l_{i}}\sum_{k=1}^{t_{ij}}n^{ij}_{k}=n=\mathrm{ord}\,P(x,\partial)$. 
We put $n^{i}_{j}=\sum_{k=1}^{t_{ij}}n^{ij}_{k}$ and  $n_{i}=\sum_{j=1}^{l_{i}}n^{i}_{j}$. For the simplicity, we put
\begin{align*}
&\nu^{ij}=(\nu^{ij}_{1},\ldots,\nu^{ij}_{t_{ij}}), &n^{ij}=(n^{ij}_{1},\ldots,n^{ij}_{t_{ij}}),
\end{align*}
and write
\[
 [\nu^{ij};n^{ij}]= \{[\nu^{ij}_{1}]_{n^{ij}_{1}},\ldots,[\nu^{ij}_{t_{ij}}]_{n^{ij}_{t_{ij}}}\}.
\]
\end{itemize}
Then we write the following table of local exponents of $P(x,\partial)$,\\
\begin{center}
\begin{tabular}{|c|c||c|c|c|c|c|c|c|}
\hline
\multicolumn{2}{|c||}{}&\multicolumn{2}{|c|}{$\alpha_{1}$}&\multicolumn{2}{c|}{$\alpha_{2}$}&$\cdots$&\multicolumn{2}{c|}{$\alpha_{r}$}\\
\hline
$c_{1}$&$[\mu^{1};m^{1}]$&$\beta^{1}_{1}$&$[\nu^{11};n^{11}]$&$\beta^{2}_{1}$&$[\nu^{21};n^{21}]$&$\cdots$&$\beta^{r}_{1}$&$[\nu^{r1};n^{r1}]$\\

$c_{2}$&$[\mu^{2};m^{2}]$&$\beta^{1}_{2}$&$[\nu^{12};n^{12}]$&$\beta^{2}_{2}$&$[\nu^{22};n^{22}]$&$\cdots$&$\beta^{r}_{2}$&$[\nu^{r2};n^{r2}]$\\
$\vdots$&$\vdots$&$\vdots$&$\vdots$&$\vdots$&$\vdots$&&$\vdots$&$\vdots$\\
 $c_{p}$&$[\mu^{p};m^{p}]$&$\beta^{1}_{l_{1}}$&$[\nu^{1l_{1}};n^{1l_{1}}]$&$\beta^{2}_{l_{2}}$&$[\nu^{2l_{2}};n^{2l_{2}}]$&&$\beta^{r}_{l_{r}}$&$[\nu^{rl_{r}};n^{rl_{r}}]$\\
\hline
\end{tabular}.
\end{center}
We call this table the table of local datum of $P(x,\partial)$. 

In particular, if $P(x,\partial)=\partial+\alpha x+\beta$ for some $\alpha,\beta\in\C$, we say $P(x,\partial)$ has the trivial table of local datum.
\end{df}

\begin{rem}
In Definition \ref{local datum},  the indeterminants $\xi=\{\mu_{j}^{i}\mid 1\le i,j\le K\}\cup
 \{\nu^{ij}_{k}\mid 1\le i,j,k\le K'\}$ have only one linear relation
 which comes from Fuchs relation (see \cite{Bert} and \cite{B-L} for example).

\end{rem}
\begin{thm}\label{lapalce transform of riemann scheme}
Let us take $P(x,\partial)\in W[x,\xi]$ as in Definition \ref{local datum}. We assume that $P(x,\partial)$ has the following nontrivial table of local datum,
\begin{center}
\begin{tabular}{|c|c||c|c|c|c|c|c|c|}
\hline
\multicolumn{2}{|c||}{}&\multicolumn{2}{|c|}{$\alpha_{1}$}&\multicolumn{2}{c|}{$\alpha_{2}$}&$\cdots$&\multicolumn{2}{c|}{$\alpha_{r}$}\\
\hline
$c_{1}$&$[\mu^{1};m^{1}]$&$\beta^{1}_{1}$&$[\nu^{11};n^{11}]$&$\beta^{2}_{1}$&$[\nu^{21};n^{21}]$&$\cdots$&$\beta^{r}_{1}$&$[\nu^{r1};n^{r1}]$\\

$c_{2}$&$[\mu^{2};m^{2}]$&$\beta^{1}_{2}$&$[\nu^{12};n^{12}]$&$\beta^{2}_{2}$&$[\nu^{22};n^{22}]$&$\cdots$&$\beta^{r}_{2}$&$[\nu^{r2};n^{r2}]$\\
$\vdots$&$\vdots$&$\vdots$&$\vdots$&$\vdots$&$\vdots$&&$\vdots$&$\vdots$\\
 $c_{p}$&$[\mu^{p};m^{p}]$&$\beta^{1}_{l_{1}}$&$[\nu^{1l_{1}};n^{1l_{1}}]$&$\beta^{2}_{l_{2}}$&$[\nu^{2l_{2}};n^{2l_{2}}]$&&$\beta^{r}_{l_{r}}$&$[\nu^{rl_{r}};n^{rl_{r}}]$\\
\hline
\end{tabular}.
\end{center}Changing the order of $\alpha_{1},\ldots,\alpha_{r}$ and applying $Ade(\frac{\alpha_{1}}{2}x^{2})$, We can assume that $\alpha_{1}=0$. Then $\mathcal{L}\circ \mathrm{R}\,P(x,\partial)$ has 
\[
\mathrm{ord}\,\mathcal{L}\circ \mathrm{R}P(x,\partial)=\sum_{i=1}^{p}\sum_{j=1}^{s_{i}}m^{i}_{j}+(n-n_{1}),
\]
and the table of local datum,
\begin{center}
\begin{tabular}{|c|c||c|c|c|c|c|c|c|}
\hline
\multicolumn{2}{|c||}{}&\multicolumn{2}{|c|}{$0$}&\multicolumn{2}{c|}{$-\frac{1}{\alpha_{2}}$}&$\cdots$&\multicolumn{2}{c|}{$-\frac{1}{\alpha_{r}}$}\\
\hline
$\beta^{1}_{1}$&$[\tilde{\nu}^{11};n^{11}]$&$-c_{1}$&$[\tilde{\mu}^{1};m^{1}]$&$\frac{\beta^{2}_{1}}{\alpha_{2}}$&$[\nu^{21};n^{21}]$&$\cdots$&$\frac{\beta^{r}_{1}}{\alpha_{r}}$&$[\nu^{r1};n^{r1}]$\\

$\beta^{1}_{2}$&$[\tilde{\nu}^{12};n^{12}]$&$-c_{2}$&$[\tilde{\mu}^{2};m^{2}]$&$\frac{\beta^{2}_{2}}{\alpha_{2}}$&$[\nu^{22};n^{22}]$&$\cdots$&$\frac{\beta^{r}_{2}}{\alpha_{r}}$&$[\nu^{r2};n^{r2}]$\\
$\vdots$&$\vdots$&$\vdots$&$\vdots$&$\vdots$&$\vdots$&&$\vdots$&$\vdots$\\
$\beta^{1}_{l_{1}}$&$[\tilde{\nu}^{1l_{1}};n^{1l_{1}}]$& $-c_{p}$&$[\tilde{\mu}^{p};m^{p}]$&$\frac{\beta^{2}_{l_{2}}}{\alpha_{2}}$&$[\nu^{2l_{2}};n^{2l_{2}}]$&&$\frac{\beta^{r}_{l_{r}}}{\alpha_{r}}$&$[\nu^{rl_{r}};n^{rl_{r}}]$\\
\hline
\end{tabular}.
\end{center}
Here 
\begin{align*}
& \tilde{\mu}^{i}=(\mu^{i}_{1}+1,\mu^{i}_{2}+1,\ldots,\mu^{i}_{s_{i}}+1), &\tilde{\nu^{1j}}=(\nu^{1j}_{1}-1,\nu^{1j}_{2}-1,\ldots,\nu^{1j}_{t_{1j}}-1),
\end{align*}
for $i=1,\ldots,p$ and $j=1,\ldots,l_{1}$.
\end{thm}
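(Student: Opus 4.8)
The plan is to track how $\mathcal{L}\circ\mathrm{R}$ acts on each block of local data separately, using the dictionary between $e^{p(x)}$-twisted semi-simple exponents at infinity and regular singular points established in Corollaries \ref{laplace transform of rank1} and \ref{laplace transform of rank2} (together with Proposition \ref{oouue}), and then to pin down the order of the transform independently. First I would carry out the normalization: by reordering $\alpha_1,\ldots,\alpha_r$ and applying the quadratic twist $\mathrm{Ade}(\frac{\alpha_1}{2}x^2)$, Proposition \ref{addition and exponents at infinity}(2) shifts every quadratic coefficient $\alpha_i$ to $\alpha_i-\alpha_1$ without touching the finite singular points, so after relabelling we may assume $\alpha_1=0$.

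Next I would transform the local data at the new infinity block by block. For $i\ge 2$ the genuine rank-$2$ blocks $e^{\frac{\alpha_i}{2}x^2+\beta^i_j x}[\nu^{ij};n^{ij}]$ are sent, by Corollary \ref{laplace transform of rank2}, to blocks with the \emph{same} exponents $[\nu^{ij};n^{ij}]$ but twist $-\frac{1}{2\alpha_i}x^2+\frac{\beta^i_j}{\alpha_i}x$, giving the columns labelled $-\frac{1}{\alpha_i}$ with $\beta$-value $\frac{\beta^i_j}{\alpha_i}$. The rank-$1$ block at $\alpha_1=0$, namely the $e^{\beta^1_j x}$-twisted exponents $[\nu^{1j};n^{1j}]$, is sent by Corollary \ref{laplace transform of rank1} to a regular singular point of $\mathcal{L}\mathrm{R}P$ at $x=\beta^1_j$ with exponents $[0]_{\ast}[\nu^{1j}-1;n^{1j}]=[0]_\ast[\tilde\nu^{1j};n^{1j}]$, which produce the left-hand columns of the new table. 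Finally, each old finite regular point $c_i$ with non-integral part $[\mu^i;m^i]$ becomes, by the converse direction of Corollary \ref{laplace transform of rank1} (equivalently Proposition \ref{oouue} after the translation $x\mapsto x-c_i$), an $e^{-c_i x}$-twisted block at the new infinity with exponents $[\mu^i+1;m^i]=[\tilde\mu^i;m^i]$ and new quadratic coefficient $0$, giving the column labelled $0$. Here the hypotheses $\mu^i_j,\nu^{ij}_k\notin\Z$ and pairwise non-congruence mod $\Z$ are exactly what keep the exponents semi-simple and prevent blocks from colliding.

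The main obstacle is the order computation $\mathrm{ord}\,\mathcal{L}\mathrm{R}P=\sum_{i,j}m^i_j+(n-n_1)$. Since $\mathcal{L}$ interchanges order and $x$-degree, I have $\mathrm{ord}\,\mathcal{L}\mathrm{R}P=\deg\mathrm{R}P$, so everything reduces to identifying $\deg\mathrm{R}P$. The exponent-$0$ flags $[0]_{m^i_0}$ at each $c_i$ give, by Proposition \ref{reduce degree}, factorizations $P=(x-c_i)^{m^i_0}Q_i$; as the $(x-c_i)^{m^i_0}$ are coprime and each divides every coefficient of $P$, we get $P=\prod_i(x-c_i)^{m^i_0}\cdot Q$, and since left multiplication by a function is coefficientwise scaling this is precisely the common factor stripped off by $\mathrm{R}$, whence $\deg\mathrm{R}P=\deg P-\sum_i m^i_0$. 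It then remains to prove $\deg P=(p+1)n-n_1$: the standard form bounds $\deg(\text{coeff of }\partial^{k})\le (p+1)n-k$, and normality at infinity (the length-$n_1$ slope-$1$ segment in the Newton polygon of $P=P(x,\partial+\alpha_1 x)$) forces this bound to be attained exactly at the slope-$1$/slope-$2$ breakpoint and to be strict for all coefficients of lower order. Combining these, $\deg\mathrm{R}P=(p+1)n-n_1-\sum_i m^i_0=\sum_{i=1}^p\sum_{j=1}^{s_i}m^i_j+(n-n_1)$, which both yields the asserted order and fixes the implicit zero-multiplicities $[0]_\ast$ at the new regular points.

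The delicate points, where the real work lies, are twofold: showing that $\mathrm{R}$ removes precisely $\prod_i(x-c_i)^{m^i_0}$ and nothing more (no accidental cancellation, which is guaranteed by treating the $\xi$ as indeterminates so that the coefficients have no further common factor), and establishing the degree identity $\deg P=(p+1)n-n_1$ from the Newton-polygon/normality data rather than just from examples. Once the order is in hand, the consistency check is automatic: the total multiplicity of the new data at infinity is $\sum_{i=1}^p(n-m^i_0)+\sum_{i\ge 2}n_i=\bigl(\sum_{i,j}m^i_j\bigr)+(n-n_1)$, matching $\mathrm{ord}\,\mathcal{L}\mathrm{R}P$, so the table of local datum listed in the statement is exactly the full local datum of $\mathcal{L}\circ\mathrm{R}\,P(x,\partial)$.
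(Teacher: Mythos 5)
Your proposal is correct and follows essentially the same route as the paper's own proof: strip the zero-exponent factor $\prod_{i}(x-c_{i})^{m^{i}_{0}}$ via Proposition \ref{reduce degree} to identify $\mathrm{R}P$, compute $\deg \mathrm{R}P=\sum_{i,j}m^{i}_{j}+(n-n_{1})$ from the normality-at-infinity degree bound (the paper writes this degree as $(p+1)(n-n_{1})$, an apparent typo for your correct value $(p+1)n-n_{1}$, which is what its subsequent arithmetic actually uses), and then read off the transformed table from Corollaries \ref{laplace transform of rank1}--\ref{laplace inverse transform of rank2}. Your block-by-block dictionary, the care about $\mathrm{R}$ removing exactly $\prod_{i}(x-c_{i})^{m^{i}_{0}}$, and the final multiplicity consistency check simply spell out details that the paper compresses into a two-line citation.
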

\begin{proof}
By Proposition \ref{strict regular}, the standard form of $P(x,\partial)$ can
 be divided by $\phi(x)=\prod_{i=1}^{p}(x-c_{i})^{m_{i}}$, i.e., there
 exist $Q(x,\partial)\in W[x,\xi]$ such that $P(x,\partial)=\phi(x)Q(x,\partial)$
 and moreover $\mathrm{R}P(x,\partial)=Q(x,\partial)$. Since we assume
 that $\alpha_{1}=0$, we can see $\deg
 P(x,\partial)=(p+1)(n-n_{1})$. Hence $\deg Q(x,\partial)=\deg
 P(x,\partial)-\sum_{i=1}^{p}m_{i}=(p+1)(n-n_{1})-\sum_{i=1}^{p}m_{i}=\sum_{i=1}^{p}(n-m^{i}_{0})+(n-n_{1})=\sum_{i=1}^{p}\sum_{j=1}^{s_{i}}m^{i}_{j}+(n-n_{1})$.

Then this theorem is obtained by Corollary \ref{laplace transform
 of rank1} and Corollary  \ref{laplace transform of rank2}.
\end{proof}
\begin{rem}
We can show the same thing as the above theorem for the Fourier-Laplace
 inverse transform by Corollary
 \ref{laplace inverse transform of rank1} and Corollary \ref{laplace
 inverse transform of rank2}.
\end{rem}
\begin{df}[The rigidity index]
Let us take $P(x,\partial)\in W[x,\xi]$ as in Definition \ref{local datum}. Then we define the number
\begin{multline*}
\mathrm{idx}\,P=\\
-((p+1)n^{2}-\sum_{i=1}^{r}n_{i}^{2}-\sum_{i=1}^{r}\sum_{j=1}^{l_{i}}(n^{i}_{j})^{2}-\sum_{i=1}^{p}\sum_{j=0}^{s_{i}}(m^{i}_{j})^{2}-\sum_{i=1}^{q}\sum_{j=1}^{l_{i}}\sum_{k=1}^{t_{ij}}(n^{ij}_{k})^{2}),
\end{multline*}
and call this the rigidity index of $P$.
\end{df}
\begin{rem}
The standard form of $P(x,\partial)$ is
\begin{equation*}
 P(x,\partial)=\sum_{i=0}^{n}\prod_{j=1}^{p}(x-c_{j})^{n-i}a_{i}(x)\partial^{n-i},
\end{equation*}
where 
\[
 \deg a_{i}(x)\le (p+1)i.
\] 
Hence $P(x,\partial)$ has 
\[
\sum_{i=1}^{n}((p+1)i+1)
\]
coefficients in $\overline{\C(\xi)}$. The informations about the sets of exponents at regular singular points $x=c_{1},\ldots, c_{p}$ require
\[
\sum_{i=1}^{p}\sum_{j=0}^{s_{i}}\frac{m^{i}_{j}(m^{i}_{j}+1)}{2}
\]
linear equations by Lemma \ref{strict regular}.  On the other hand,
 $P(x,\partial)$ has $n_{i}$-dimensional formal solutions with
 $e^{\frac{\alpha_{i}}{2}x^{2}+\beta^{i}_{j}x}$-twisted semi-simple exponents for $i=1,\ldots,r$ and $j=1,\ldots,l_{i}$. Hence the standard form of $P(x,\partial+\alpha_{i} x)$ is
\begin{equation*}
 P(x,\partial+\alpha_{i} x)=\sum_{k=0}^{n}\prod_{l=1}^{p}(x-c_{l})^{n-k}b_{k}^{i}(x)\partial^{n-k},
\end{equation*}
where \[
\begin{cases}
 \deg b^{i}_{k}(x)\le (p+1)k &\text{for}\ 1\le k\le (n-n_{i})\\
 \deg b^{i}_{(n-n_{i})+m}(x)\le pm +(p+1)(n-n_{i})&\text{for}\ 1\le m\le n_{i}
\end{cases}.\]
Hence this requires
\[
\sum_{k=1}^{n}(\deg a_{k}(x)-\deg b^{i}_{k}(x))=\frac{n_{i}(n_{i}+1)}{2}
\]
linear equations of $\overline{\C(\xi)}$. Moreover $P(x,\partial+\alpha_{i} x+\beta^{i}_{j})$ has the standard form
\begin{equation*}
 P(x,\partial+\alpha_{i} x+\beta^{i}_{j})=\sum_{k=0}^{n}\prod_{l=1}^{p}(x-c_{l})^{n-k}c_{k}^{ij}(x)\partial^{n-k},
\end{equation*}
where \[
\begin{cases}
 \deg c^{ij}_{k}(x)\le (p+1)k &\text{for}\ 1\le k\le (n-n_{i})\\
 \deg c^{ij}_{(n-n_{i})+l}(x)\le pl +(p+1)(n-n_{i})&\text{for}\ 1\le l\le (n_{i}-n^{i}_{j})\\
 \deg c^{ij}_{(n-(n_{i}-n^{i}_{j}))+m}\le (p-1)m+p(n_{i}-n^{i}_{j})+(p+1)(n-n_{i})&\text{for}\ 1\le m\le n^{i}_{j}
\end{cases}.\]
Hence this requires
\[
\sum_{k=1}^{n}(\deg b^{i}_{k}(x)-\deg c^{ij}_{k}(x))=\frac{n^{i}_{j}(n^{i}_{j}+1)}{2}
\]
linear equations of $\overline{\C(\xi)}$. Finally, informations about the sets of exponents
require
\[
\sum_{k=1}^{t_{ij}}\frac{n^{ij}_{k}(n^{ij}_{k}+1)}{2}
\]
linear equations of $\overline{\C(\xi)}$ by Proposition \ref{strictly
 normal}. There is one more linear equation from Fuchs
 relation. Hence there are at most the following parameters in $P(x,\partial)$,
\begin{multline*}
\sum_{i=1}^{n}((p+1)i+1)-\sum_{i=1}^{p}\sum_{j=0}^{s_{i}}\frac{m^{i}_{j}(m^{i}_{j}+1)}{2}-\sum_{i=1}^{r}\frac{n_{i}(n_{i}+1)}{2}\\
-\sum_{i=1}^{r}\sum_{j=1}^{l_{i}}\frac{n^{i}_{j}(n^{i}_{j}+1)}{2}-\sum_{i=1}^{p}\sum_{j=1}^{l_{i}}\sum_{k=1}^{t_{ij}}\frac{n^{ij}_{k}(n^{ij}_{k}+1)}{2}+1=1-\frac{1}{2}\mathrm{idx}\,P.
\end{multline*}
\end{rem}
\begin{thm}\label{addition and middle convolution}
Let us take $P(x,\partial)\in W[x,\xi]$ as in Definition \ref{local datum} with the table of local datum,
\begin{center}
\begin{tabular}{|c|c||c|c|c|c|c|c|c|}
\hline
\multicolumn{2}{|c||}{}&\multicolumn{2}{|c|}{$\alpha_{1}$}&\multicolumn{2}{c|}{$\alpha_{2}$}&$\cdots$&\multicolumn{2}{c|}{$\alpha_{r}$}\\
\hline
$c_{1}$&$[\mu^{1};m^{1}]$&$\beta^{1}_{1}$&$[\nu^{11};n^{11}]$&$\beta^{2}_{1}$&$[\nu^{21};n^{21}]$&$\cdots$&$\beta^{r}_{1}$&$[\nu^{r1};n^{r1}]$\\

$c_{2}$&$[\mu^{2};m^{2}]$&$\beta^{1}_{2}$&$[\nu^{12};n^{12}]$&$\beta^{2}_{2}$&$[\nu^{22};n^{22}]$&$\cdots$&$\beta^{r}_{2}$&$[\nu^{r2};n^{r2}]$\\
$\vdots$&$\vdots$&$\vdots$&$\vdots$&$\vdots$&$\vdots$&&$\vdots$&$\vdots$\\
 $c_{p}$&$[\mu^{p};m^{p}]$&$\beta^{1}_{l_{1}}$&$[\nu^{1l_{1}};n^{1l_{1}}]$&$\beta^{2}_{l_{2}}$&$[\nu^{2l_{2}};n^{2l_{2}}]$&&$\beta^{r}_{l_{r}}$&$[\nu^{rl_{r}};n^{rl_{r}}]$\\
\hline
\end{tabular}.
\end{center}
\begin{enumerate}
\item For $f(\xi)\in \C(\xi)$ and $i=1,\ldots,p$, the table of local datum of $\mathrm{RAd}((x-c_{i})^{f(\xi)})P(x,\partial)$ is
\begin{center}
\begin{tabular}{|c|c||c|c|c|c|c|}
\hline
\multicolumn{2}{|c||}{}&\multicolumn{2}{|c|}{$\alpha_{1}$}&$\cdots$&\multicolumn{2}{c|}{$\alpha_{r}$}\\
\hline
$c_{1}$&$[\mu^{1};m^{1}]$&$\beta^{1}_{1}$&$[\nu^{11}-f(\xi);n^{11}]$&$\cdots$&$\beta^{r}_{1}$&$[\nu^{r1}-f(\xi);n^{r1}]$\\

$c_{2}$&$[\mu^{2};m^{2}]$&$\beta^{1}_{2}$&$[\nu^{12}-f(\xi);n^{12}]$&$\cdots$&$\beta^{r}_{2}$&$[\nu^{r2}-f(\xi);n^{r2}]$\\
$\vdots$&$\vdots$&$\vdots$&$\vdots$&&$\vdots$&$\vdots$\\
$c_{i}$&$[\mu^{i}+f(\xi);m^{i}]$&$\vdots$&$\vdots$&&$\vdots$&$\vdots$\\
$\vdots$&$\vdots$&$\vdots$&$\vdots$&&$\vdots$&$\vdots$\\
 $c_{p}$&$[\mu^{p};m^{p}]$&$\beta^{1}_{l_{1}}$&$[\nu^{1l_{1}}-f(\xi);n^{1l_{1}}]$&&$\beta^{r}_{l_{r}}$&$[\nu^{rl_{r}}-f(\xi);n^{rl_{r}}]$\\
\hline
\end{tabular}.
\end{center}
where $\mu^{i}+f(\xi)=(\mu^{i}_{1}+f(\xi),\ldots,\mu^{i}_{s_{i}}+f(\xi))$ and $\nu^{ij}-f(\xi)=(\nu^{ij}_{1}-f(\xi),\ldots,\nu^{ij}_{t_{ij}}-f(\xi))$.
\item For $i=1,\ldots,r$, $j=1,\ldots,l_{i}$ and $k=1,\ldots,t_{ij}$, the table of local datum of $E(\frac{\alpha_{i}}{2}x^{2}+\beta^{i}_{j}x;\beta^{i}_{j},\nu^{ij}_{k}-1)P(x,\partial)$ is 
\begin{center}
\begin{tabular}{|c|c||c|c|c|c|c|c|c|}
\hline
\multicolumn{2}{|c||}{}&\multicolumn{2}{|c|}{$\alpha_{1}$}&\multicolumn{2}{c|}{$\alpha_{2}$}&$\cdots$&\multicolumn{2}{c|}{$\alpha_{r}$}\\
\hline
$c_{1}$&$[\tilde{\mu}^{1};m^{1}]$&$\beta^{1}_{1}$&$[\tilde{\nu}^{11};\tilde{n}^{11}]$&$\beta^{2}_{1}$&$[\tilde{\nu}^{21};\tilde{n}^{21}]$&$\cdots$&$\beta^{r}_{1}$&$[\tilde{\nu}^{r1};\tilde{n}^{r1}]$\\

$c_{2}$&$[\tilde{\mu}^{2};m^{2}]$&$\beta^{1}_{2}$&$[\tilde{\nu}^{12};\tilde{n}^{12}]$&$\beta^{2}_{2}$&$[\tilde{\nu}^{22};\tilde{n}^{22}]$&$\cdots$&$\beta^{r}_{2}$&$[\tilde{\nu}^{r2};\tilde{n}^{r2}]$\\
$\vdots$&$\vdots$&$\vdots$&$\vdots$&$\vdots$&$\vdots$&&$\vdots$&$\vdots$\\
 $c_{p}$&$[\tilde{\mu}^{p};m^{p}]$&$\beta^{1}_{l_{1}}$&$[\tilde{\nu}^{1l_{1}};\tilde{n}^{1l_{1}}]$&$\beta^{2}_{l_{2}}$&$[\tilde{\nu}^{2l_{2}};\tilde{n}^{2l_{2}}]$&&$\beta^{r}_{l_{r}}$&$[\tilde{\nu}^{rl_{r}};\tilde{n}^{rl_{r}}]$\\
\hline
\end{tabular},
\end{center}
where 
\begin{multline*}
\tilde{\nu}^{xy}=\\
\begin{cases}
\begin{split}(\nu^{ij}_{1}-(\nu^{ij}_{k}-1),\ldots,\nu^{ij}_{k-1}-(\nu^{ij}_{k}-1),-\nu^{ij}_{k},\nu^{ij}_{k+1}-(\nu^{ij}_{k}-1),\ldots)\\
\text{if }(x,y)=(i,j),\end{split}\\
\nu^{iy}\text{ if }x=i \text{ and }y\neq j,\\
(\nu^{xy}_{1}+\nu^{ij}_{k}-1,\ldots,\nu^{xy}_{t_{xy}}+\nu^{ij}_{k}-1)\text{ otherwise},
\end{cases}
\end{multline*}
\[
\tilde{\mu}^{x}=(\mu^{x}_{1}+\nu^{ij}_{k}-1,\ldots,\mu^{x}_{s_{x}}+\nu^{ij}_{k}-1),
\]
\[
\tilde{n}^{xy}=
\begin{cases}
(n^{ij}_{1},\ldots,n^{ij}_{k-1},N_{i}-n^{i}_{j},n^{ij}_{k+1},\ldots)&\text{if } (x,y)=(i,j),\\
n^{xy}&\text{otherwise},
\end{cases}
\]
for $N_{i}=\sum_{k=1}^{p}\sum_{l=1}^{s_{k}}m^{k}_{l}+(n-n_{i})$. The order of $E(\frac{\alpha_{i}}{2}x^{2}+\beta^{i}_{j}x;\beta^{i}_{j},\nu^{ij}_{k}-1)P(x,\partial)$ is $n-n^{ij}_{k}+N_{i}-n^{i}_{j}$.
\end{enumerate}
\end{thm}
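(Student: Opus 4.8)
The first assertion is essentially a bookkeeping of the two addition lemmas. Recall that $\mathrm{RAd}((x-c_i)^{f(\xi)})P$ is obtained from $P$ by the conjugation $P\mapsto (x-c_i)^{f(\xi)}P(x-c_i)^{-f(\xi)}$ followed by passing to a reduced representative; since $\mathrm{R}P\sim P$ (they differ by a factor in $\C(x,\xi)$), the operation $\mathrm{R}$ leaves the differential equation, hence all local exponents, unchanged. Thus it suffices to read off the effect of the conjugation singular point by singular point. At $x=c_i$ the factor $(x-c_i)^{f(\xi)}$ shifts the exponents by $+f(\xi)$ by Proposition~\ref{addition and exponents}(1); at the other finite points $c_{i'}$ it is holomorphic and invertible, so by Proposition~\ref{addition and exponents}(2) the exponents there are unchanged; and since $(x-c_i)^{f(\xi)}=x^{f(\xi)}(1+O(x^{-1}))$ near infinity, Proposition~\ref{addition and exponents at infinity}(1) shows that every $e^{\frac{\alpha_{i'}}{2}x^2+\beta^{i'}_{j'}x}$-twisted block at infinity has its exponents shifted by $-f(\xi)$. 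This is exactly the table in assertion~1.

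For the second assertion the plan is to reduce the twisted Euler transform to the plain Euler transform $E(0,\nu^{ij}_k-1)$ and to analyse the latter through the Fourier--Laplace picture. Using Remark~\ref{Euler and twisted Euler} together with the composition rule $\mathrm{Ade}(p)\circ\mathrm{Ade}(q)=\mathrm{Ade}(p+q)$, the operator $E(\frac{\alpha_i}{2}x^2+\beta^i_jx;\beta^i_j,\nu^{ij}_k-1)$ is a conjugate of $E(0,\nu^{ij}_k-1)$ by an $e^{p(x)}$-twisting with $\deg p=2$. By Proposition~\ref{addition and exponents}(3) this $\mathrm{Ade}$-conjugation fixes all exponents at the finite points, and by Proposition~\ref{addition and exponents at infinity}(2) it merely relabels the exponential parts of the blocks at infinity; in particular it normalises the exponential part of the target block $(i,j)$ and places it in the position on which the plain Euler transform operates, i.e.\ the block that $\mathcal{L}^{-1}$ converts into a regular singular point. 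Hence it is enough to track the table through $E(0,\nu^{ij}_k-1)=\mathcal{L}\circ\mathrm{RAd}(x^{-(\nu^{ij}_k-1)})\circ\mathcal{L}^{-1}\circ\mathrm{R}$.

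I would run through the four factors in turn. The initial $\mathrm{R}$ factors out $\phi(x)=\prod_{t=1}^{p}(x-c_{t})^{m^{t}_{0}}$ by Proposition~\ref{strict regular} and Proposition~\ref{reduce degree}, and does not change the equation. Applying $\mathcal{L}^{-1}$ and the inverse form of the transformation of tables established in Theorem~\ref{lapalce transform of riemann scheme}, together with Corollaries~\ref{laplace transform of rank1}--\ref{laplace inverse transform of rank2}, converts the block $(i,j)$ into a genuine regular singular point while sending the finite points and the slope-$2$ blocks to the remaining singularities; here the orders are governed by the degree count already carried out for Theorem~\ref{lapalce transform of riemann scheme}, which is the source of $N_i=\sum_{t=1}^{p}\sum_{l=1}^{s_t}m^{t}_{l}+(n-n_i)$. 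The middle $\mathrm{RAd}(x^{-(\nu^{ij}_k-1)})$ is an addition at that regular point, so by assertion~1 it shifts the exponent attached to $\nu^{ij}_k$ and, after the reduction $\mathrm{R}$, removes the corresponding factor by Proposition~\ref{reduce degree}: this is precisely where the entry $\nu^{ij}_k$ is replaced by $-\nu^{ij}_k$, a new block of multiplicity $N_i-n^i_j$ is created, and the order drops to $n-n^{ij}_k+N_i-n^i_j$. Finally $\mathcal{L}$ transforms everything back by Theorem~\ref{lapalce transform of riemann scheme}, and undoing the initial $\mathrm{Ade}$-conjugation restores the exponential labels. Collecting the shifts from the addition (which moves the exponents at infinity by $+(\nu^{ij}_k-1)$) and from the two Laplace transforms then yields $\tilde{\mu}^x=\mu^x+\nu^{ij}_k-1$ at the finite points, the stated $\tilde{\nu}^{xy}$ at infinity, and leaves the blocks with the same $\alpha_i$ but $y\neq j$ untouched.

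The main obstacle is the combinatorial bookkeeping of the third paragraph: one must verify that the reductions $\mathrm{R}$ delete exactly the predicted factors, neither more nor less, so that the order is precisely $n-n^{ij}_k+N_i-n^i_j$ and no spurious integer resonances appear. As in the proof of Proposition~\ref{inverse of euler transform}, this is where the genericity of the parameters $\xi$ (the hypotheses that the $\mu^{t}_{l}$ and $\nu^{ij}_k$ and their mutual differences avoid $\Z$) enters, via Proposition~\ref{strict regular} and Proposition~\ref{reduce degree}, guaranteeing that the only factor that can be split off is the one forced by the shifted exponent. Everything else is the routine substitution of the exponent shifts into the tables of Theorem~\ref{lapalce transform of riemann scheme} and assertion~1.
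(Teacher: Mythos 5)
Your proposal is correct and takes essentially the same route as the paper's own (two-sentence) proof: assertion 1 is read off from Proposition \ref{addition and exponents} and Proposition \ref{addition and exponents at infinity}, and assertion 2 is obtained by factoring the twisted Euler transform through the Fourier--Laplace transform and combining Theorem \ref{lapalce transform of riemann scheme} with assertion 1, which is exactly what the paper cites. Your write-up merely supplies the bookkeeping (the $\mathrm{Ade}$-conjugation, the location of the intermediate regular singular point, the reduction $\mathrm{R}$ and the genericity of $\xi$) that the paper leaves implicit.
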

\begin{proof}
The first assertion is the  direct consequence of Proposition \ref{addition and exponents} and Proposition \ref{addition and exponents at infinity}. The second assertion follows from Theorem \ref{lapalce transform of riemann scheme} and the first assertion.
\end{proof}
\begin{rem}\label{well-defined middle convolution}
In the second assertion of Theorem \ref{addition and middle convolution}, we see that the order of $E(\frac{\alpha_{i}}{2}x^{2}+\beta^{i}_{j}x;\beta^{i}_{j},\nu^{ij}_{k}-1)P(x,\partial)$ is $(n-n_{k}^{ij})+(N-n^{i}_{j})$.
By the same argument as the proof of Proposition \ref{oouue}, we can see
 that $N_{i}-n^{i}_{j}\ge 0$ for all $i$ and $j$. Hence $E(\frac{\alpha_{i}}{2}x^{2}+\beta^{i}_{j}x;\beta^{i}_{j},\nu^{ij}_{k}-1)P(x,\partial)$ is well-defined, i.e., $(n-n_{k}^{ij})+(N_{i}-n^{i}_{j})>0$, if and only if $n-n^{ij}_{k}>0$ or $N_{i}-n^{i}_{j}>0$.  
On the contrary, if we assume $n=n^{ij}_{k}$ and $N_{i}=n^{i}_{j}$, we can see the following. 
\end{rem}
\begin{lem}\label{reducibility}
Let us take $P(x,\partial)\in W[x,\xi]$ whose table of local datum is
\begin{center}
\begin{tabular}{|c|c||c|c|}
\hline
\multicolumn{2}{|c||}{}&\multicolumn{2}{|c|}{$\alpha$}\\
\hline
$c_{1}$&$[\mu^{1};m^{1}]$&$\beta$&$[(0);(n)]$\\
$c_{2}$&$[\mu^{2};m^{2}]$&&\\
$\vdots$&$\vdots$&&\\
 $c_{p}$&$[\mu^{p};m^{p}]$&&\\
\hline
\end{tabular}. 
\end{center} 
And we assume $n=\sum_{i=1}^{p}\sum_{j=1}^{s_{i}}m^{i}_{j}$.
Then 
\[
P(x,\partial)\sim (\partial-\alpha x-\beta)^{n}.
\]
\end{lem}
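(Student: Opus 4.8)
The plan is to strip off the exponential factor first and thereby reduce the statement to the untwisted case. Put $p(x)=\frac{\alpha}{2}x^{2}+\beta x$ and consider $\tilde P=\mathrm{Ade}(-p(x))P=P(x,\partial+\alpha x+\beta)$. By Proposition \ref{addition and exponents at infinity} together with Proposition \ref{addition and exponents}(3), the operator $\tilde P$ carries the same exponents as $P$ at each finite $c_{i}$ and an \emph{untwisted} single semi-simple block of full length $n$ at $x=\infty$; in particular $x=\infty$ becomes an ordinary regular singular point of $\tilde P$. Since $\mathrm{Ade}(p(x))$ is an algebra automorphism compatible with $\sim$ and sends $\partial^{n}$ to $(\partial-\alpha x-\beta)^{n}$, it suffices to prove $\tilde P\sim\partial^{n}$.

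To do that I would show that the points $c_{1},\dots,c_{p}$ are only apparent singularities of $\tilde P$, so that $\tilde P$ shares its full $n$-dimensional solution space with $\partial^{n}$. Here the hypothesis $n=\sum_{i=1}^{p}\sum_{j=1}^{s_{i}}m^{i}_{j}$ is decisive: rewriting it as $\sum_{i}(n-m_{0}^{i})=n$ exactly balances the exponent multiplicities at the $c_{i}$ against the single full-length block at infinity. Concretely, I would feed the local data of $\tilde P$ through the Fourier--Laplace transform and Theorem \ref{lapalce transform of riemann scheme} in the degenerate regime $n_{1}=n$, in which the order is preserved, trading the block at infinity for a trivial block at a finite point and the $c_{i}$-data for the behaviour at infinity. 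The semi-simplicity recorded in Propositions \ref{strictly normal} and \ref{strict regular} guarantees that the regular singular points so produced carry no logarithmic terms, and repeated application of the factorisation criterion Proposition \ref{reduce degree}, which divides out $(x-c)^{m}$ precisely when there are $m$ independent solutions with the trivial jet $(x-c)^{0},\dots,(x-c)^{m-1}$, then strips $\tilde P$ down until its reduced representative is forced to be $\partial^{n}$.

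The hard part will be that every exponent of the full block at infinity is an integer and the exponents within it differ by integers, so the generic-position hypotheses $\mu_{i}\notin\Z$, $\mu_{i}-\mu_{j}\notin\Z$ underlying Proposition \ref{oouue} and Corollaries \ref{laplace transform of rank1}--\ref{laplace inverse transform of rank2} are violated; those results cannot be quoted verbatim and the resonant case must be handled by hand. The real content is to check that despite the integer differences no logarithm is created, which is exactly the semi-simplicity assumption built into the table of local datum via Proposition \ref{strictly normal}, and that each transform step yields an apparent rather than a genuine singularity. Once this no-logarithm bookkeeping is in place, the dimension count forces the solution space of $\tilde P$ to coincide with that of $\partial^{n}$, and an order-$n$ operator with a prescribed $n$-dimensional solution space is determined up to left multiplication by an element of $\C(x,\xi)$; this gives $\tilde P\sim\partial^{n}$ and hence $P\sim(\partial-\alpha x-\beta)^{n}$.
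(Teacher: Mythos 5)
Your opening move coincides with the paper's: twist by $\mathrm{Ade}(-\frac{\alpha}{2}x^{2}-\beta x)$, observe that $\tilde P=P(x,\partial+\alpha x+\beta)$ has a regular singular point at $x=\infty$, and reduce to showing $\tilde P\sim\partial^{n}$. After that, however, your proposal and the paper part ways, and your route has a genuine gap at its center. You propose to show that the $c_{i}$ are apparent singularities and that the solution space of $\tilde P$ equals that of $\partial^{n}$, via Fourier--Laplace transforms, Theorem \ref{lapalce transform of riemann scheme} in the degenerate regime, and repeated use of Proposition \ref{reduce degree}. But, as you yourself note, every one of those results is proved under the genericity hypotheses $\mu_{i}\notin\Z$, $\mu_{i}-\mu_{j}\notin\Z$, which are violated by the fully resonant block $[0]_{n}$; you then declare that ``the resonant case must be handled by hand'' and that ``once this no-logarithm bookkeeping is in place'' the dimension count finishes the job. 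That deferred bookkeeping \emph{is} the entire content of the lemma: the statement that the $c_{i}$ are apparent singularities (equivalently, that all the $\mu^{i}_{j}$ collapse to integers and the formal solutions terminate as polynomials) is essentially a restatement of the conclusion $P\sim(\partial-\alpha x-\beta)^{n}$, so it cannot be used as an intermediate stepping stone without a concrete mechanism for proving it. Your proposal supplies no such mechanism.

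The paper's proof resolves exactly this point by a short direct computation that never passes through solution spaces or the Fourier--Laplace transform. The counting hypothesis $n=\sum_{i=1}^{p}\sum_{j=1}^{s_{i}}m^{i}_{j}$ is used not for an abstract dimension count but for a degree bound: after dividing the standard form by $\prod_{i}(x-c_{i})^{m^{i}_{0}}$ (Propositions \ref{strict regular} and \ref{reduce degree} applied to the $[0]_{m^{i}_{0}}$ blocks), the reduced representative can be written as
\[
\tilde P=\sum_{i=0}^{n}x^{n-i}P_{i}(\partial),\qquad \deg P_{i}\le n,\qquad P_{i}^{(j)}(0)=0\ \text{for}\ i+j<n .
\]
Then the semi-simplicity of the single full-length block at infinity, encoded through Proposition \ref{strictly normal} as the vanishing of the characteristic polynomials $p_{n+j}(\,\cdot\,,0)$ at the prescribed integer arguments, forces $P_{i}^{(j)}(0)=0$ for all $i=1,\ldots,n$ and $j=0,\ldots,n-1$. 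Combined with $\deg P_{i}\le n$, this means every $P_{i}(x)$ is a constant multiple of $x^{n}$, so $\tilde P=f(x)\partial^{n}$ for some polynomial $f$, i.e.\ $\tilde P\sim\partial^{n}$ and $P\sim(\partial-\alpha x-\beta)^{n}$. In short: where your plan hits the resonance wall and stops, the paper substitutes a finite linear-algebra computation with the coefficients $P_{i}^{(j)}(0)$; to salvage your approach you would have to carry out an equivalent computation anyway, at which point the detour through apparent singularities and the Laplace transform adds nothing.
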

\begin{proof}
We can see that $x=\infty$ is a regular singular point of  $P(x,\partial+\alpha x+\beta)$. Hence we can write 
\[
P(x,\partial+\alpha x+\beta)=\sum_{i=0}^{n}x^{n-i}P_{i}(\partial),
\]
where $\deg P_{i}(x)\le n$ for $i=0,\ldots,n$ and $P_{i}^{(j)}(0)=0$ for $i+j<n$.
We consider polynomials
\[
p_{k}(\lambda,x)=\sum_{i=0}^{k}(\lambda+i+1)_{k-i}\frac{P^{(k-i)}_{i}(x)}{(k-i)!}.
\]
The characteristic exponents at $x=\infty$ implies that 
\[
p_{n+j}(-i-j,0)=0
\]
for $i=0,1,\ldots,n-1$ and $j=0,\ldots,n-i-1.$ This implies that $P_{i}^{(j)}(0)=0$ for $i=1,\ldots,n$ and $j=0,1,\ldots n-1$. Hence we  have 
\[
P(x,\partial+\alpha x+\beta)\sim \partial^{n},
\]
i.e.,
\[
P(x,\partial)\sim (\partial-\alpha x-\beta)^{n}.
\]
\end{proof}
\begin{cor}\label{reducible scheme}
Let us take $P(x,\partial)\in W[x,\xi]$ as in Definition \ref{local datum} with the table of local datum,
\begin{center}
\begin{tabular}{|c|c||c|c|}
\hline
\multicolumn{2}{|c||}{}&\multicolumn{2}{|c|}{$\alpha$}\\
\hline
$c_{1}$&$[\mu^{1};m^{1}]$&$\beta$&$[(\nu);(n)]$\\
$c_{2}$&$[\mu^{2};m^{2}]$&&\\
$\vdots$&$\vdots$&&\\
 $c_{p}$&$[\mu^{p};m^{p}]$&&\\
\hline
\end{tabular}.
\end{center}
And we assume that $n=\sum_{i=1}^{r}\sum_{j=1}^{s_{i}}m^{i}_{j}$.
Then we have
\[
E(\frac{\alpha}{2}x^{2};\beta,\nu)P\sim (\partial-\alpha x-\beta)^{n}
\]
\end{cor}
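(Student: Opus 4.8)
The plan is to reduce the assertion to Lemma \ref{reducibility}, which is exactly the present statement with the single exponent $\nu$ at infinity replaced by $0$. First I would remove the $e^{\frac{\alpha}{2}x^{2}+\beta x}$-twist. By Remark \ref{Euler and twisted Euler} and the definition of the twisted Euler transform,
\[
E(\tfrac{\alpha}{2}x^{2};\beta,\nu)=\mathrm{Ade}(\tfrac{\alpha}{2}x^{2}+\beta x)\circ E(0,\nu)\circ \mathrm{Ade}(-(\tfrac{\alpha}{2}x^{2}+\beta x)),
\]
and since $\mathrm{Ade}(-(\frac{\alpha}{2}x^{2}+\beta x))(\partial-\alpha x-\beta)^{n}=\partial^{n}$, it suffices to prove that $E(0,\nu)\tilde{P}\sim\partial^{n}$, where $\tilde{P}=\mathrm{Ade}(-(\frac{\alpha}{2}x^{2}+\beta x))P=P(x,\partial+\alpha x+\beta)$. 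By Proposition \ref{addition and exponents} the operator $\tilde{P}$ still has regular singular points at $c_{1},\dots,c_{p}$ with the same semi-simple exponents, while by the definition of the $e^{p(x)}$-twisted exponents it now has an honest regular singular point at $x=\infty$ with the single exponent block $[(\nu);(n)]$; in particular $\tilde{P}$ is Fuchsian and the hypothesis $n=\sum_{i,j}m^{i}_{j}$ is untouched.

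Next I would compute the table of local datum of $E(0,\nu)\tilde{P}$. As $\tilde{P}$ is Fuchsian, $E(0,\nu)=\mathcal{L}\,\mathrm{RAd}(x^{-\nu})\,\mathcal{L}^{-1}\,\mathrm{R}$ is the classical additive middle convolution, and I would evaluate it exactly as in the proof of Theorem \ref{addition and middle convolution}(2): apply the Fourier-Laplace transform (Theorem \ref{lapalce transform of riemann scheme} and its inverse-transform analogue), perform the addition $\mathrm{RAd}(x^{-\nu})$ at the origin (Theorem \ref{addition and middle convolution}(1)), and transform back. Tracking the exponents through these three steps, the single full-multiplicity block $[(\nu);(n)]$ at infinity is carried to $[(0);(n)]$, so that $E(0,\nu)\tilde{P}$ is again an order-$n$ Fuchsian operator whose exponents at $x=\infty$ are $0,1,\dots,n-1$. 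With this local datum in hand, the characteristic-equation argument of Lemma \ref{reducibility} (applied with $\alpha=\beta=0$) forces $E(0,\nu)\tilde{P}\sim\partial^{n}$; re-applying $\mathrm{Ade}(\frac{\alpha}{2}x^{2}+\beta x)$ then yields $E(\frac{\alpha}{2}x^{2};\beta,\nu)P\sim(\partial-\alpha x-\beta)^{n}$.

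The delicate point is the order count in the middle step. This is precisely the degenerate situation of Remark \ref{well-defined middle convolution}, in which $N_{i}=n^{i}_{j}$; there the reducing middle convolution would collapse the order to $0$, whereas here the operator $E(0,\nu)$ is chosen so as to perform a pure shift $\nu\mapsto0$ of the exponent at infinity while preserving the order $n$. The hard part will therefore be to control, through the two Laplace transforms, the bookkeeping of the $[0]$-blocks at the finite points and the reductions $\mathrm{R}$ — in particular to check that those finite singular points which become apparent are exactly the ones removed by $\mathrm{R}$, so that no spurious drop of the order occurs and the exponents at infinity land on $\{0,1,\dots,n-1\}$ with full multiplicity.
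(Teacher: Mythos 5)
Your skeleton is the same as the paper's: the paper also obtains the new table of local datum of $E(\frac{\alpha}{2}x^{2};\beta,\nu)P$ (by citing Theorem \ref{addition and middle convolution}) and then concludes with Lemma \ref{reducibility}; your preliminary conjugation by $\mathrm{Ade}(\pm(\frac{\alpha}{2}x^{2}+\beta x))$ is correct and is already implicit in the definition of the twisted Euler transform. The gap is that the one step you defer --- that $E(0,\nu)\tilde{P}$ is again of order $n$, Fuchsian, with exponents $[0]_{n}$ at infinity --- is the entire content of the corollary, and the route you propose for verifying it (``exactly as in the proof of Theorem \ref{addition and middle convolution}(2)'') cannot be carried out. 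As you yourself observe, this is the degenerate case $n^{ij}_{k}=n$, $N_{i}=n^{i}_{j}$ excluded in Remark \ref{well-defined middle convolution}: the order formula of Theorem \ref{addition and middle convolution} gives $n-n^{ij}_{k}+N_{i}-n^{i}_{j}=0$, and, more fundamentally, every bookkeeping result that proof rests on (Proposition \ref{oouue} and Corollaries \ref{laplace transform of rank1}--\ref{laplace inverse transform of rank2}) requires exponents and exponent differences to be non-integral. That hypothesis fails here by design: after $\mathcal{L}^{-1}$ the exponents at the origin are $[\nu-1]_{n}$, and after $\mathrm{RAd}(x^{-\nu})$ they are the integers $[-1]_{n}$, so Proposition \ref{reduce degree} forces an honest division by a power of $x$ --- the drop of degree is not ``spurious'' and cannot be ruled out; it actually happens. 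What remains after this observation is not bookkeeping but a genuinely new computation, which is exactly what the paper packages into the characteristic-equation argument of Lemma \ref{reducibility}.

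Worse, the deferred claim is false as you state it, under the paper's own normalizations (Definition of $E$ via $\mathrm{RAd}(x^{-f})$, and exponent $\mu$ at infinity corresponding to solutions $x^{-\mu-j}$). Take the simplest admissible example
\[
\tilde{P}=x(1-x)\partial^{2}+(\nu+1)(1-2x)\partial-\nu(\nu+1),
\]
whose solution space is spanned by $x^{-\nu}$ and $(1-x)^{-\nu}$; its table is $\{[0]_{1},[-\nu]_{1}\}$ at $x=0$ and at $x=1$, and $[(\nu);(2)]$ at infinity, with $n=2=\sum m^{i}_{j}$. A direct computation gives $\mathcal{L}^{-1}\tilde{P}=-x^{2}\partial^{2}+(x^{2}+(2\nu-2)x)\partial+(1-\nu)x+\nu(1-\nu)$, then $\mathrm{RAd}(x^{-\nu})\mathcal{L}^{-1}\tilde{P}=-x\partial^{2}+(x-2)\partial+1$, and finally $E(0,\nu)\tilde{P}\sim x(x-1)\partial$: the order drops from $2$ to $1$, the exponents at infinity do not land on $[0]_{2}$, and the conclusion $\sim\partial^{2}$ fails; instead one finds $E(0,\nu+1)\tilde{P}=x(1-x)\partial^{2}\sim\partial^{2}$. (On solutions this is transparent: the Riemann-Liouville integral $I^{\nu}$ sends both $x^{-\nu}$ and $(1-x)^{-\nu}$ to constants, collapsing the solution space, while $I^{\nu+1}$ sends them to $x$ and $1-x$.) So the transform that produces $(\partial-\alpha x-\beta)^{n}$ carries the parameter $\nu+1$, not $\nu$: there is an off-by-one in the normalization which must be repaired before your ``hard part'' can even be posed correctly. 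To be fair, the paper's own three-line proof shares this defect --- it asserts the intermediate table by citing Theorem \ref{addition and middle convolution}, whose formulas do not cover this degenerate case --- but a complete proof must (i) fix the parameter normalization and (ii) verify the hypotheses of Lemma \ref{reducibility} by a direct Frobenius computation at infinity; your proposal does neither.
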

\begin{proof}
By Theorem \ref{addition and middle convolution}, we can see that
 $E(\frac{\alpha}{2}x^{2};\beta,\nu)P$ has the table of local datum
\begin{center}
\begin{tabular}{|c|c||c|c|}
\hline
\multicolumn{2}{|c||}{}&\multicolumn{2}{|c|}{$\alpha$}\\
\hline
$c_{1}$&$[\mu^{1};m^{1}]$&$\beta$&$[(0);(n)]$\\
$c_{2}$&$[\mu^{2};m^{2}]$&&\\
$\vdots$&$\vdots$&&\\
 $c_{p}$&$[\mu^{p};m^{p}]$&&\\
\hline
\end{tabular}.
\end{center}
Therefore by Lemma \ref{reducibility}, we have
\[
 E(\frac{\alpha}{2}x^{2};\beta,\nu)P\sim (\partial-\alpha x-\beta)^{n}.
\]

\end{proof}
\begin{prop}\label{inversion of Euler transform}
Let us take $P(x,\partial)\in W[x,\xi]$ as in Definition \ref{local
 datum} and assume that $P(x,\partial)$ has the nontrivial table of local datum
\begin{center}
\begin{tabular}{|c|c||c|c|c|c|c|c|c|}
\hline
\multicolumn{2}{|c||}{}&\multicolumn{2}{|c|}{$\alpha_{1}$}&\multicolumn{2}{c|}{$\alpha_{2}$}&$\cdots$&\multicolumn{2}{c|}{$\alpha_{r}$}\\
\hline
$c_{1}$&$[\mu^{1};m^{1}]$&$\beta^{1}_{1}$&$[\nu^{11};n^{11}]$&$\beta^{2}_{1}$&$[\nu^{21};n^{21}]$&$\cdots$&$\beta^{r}_{1}$&$[\nu^{r1};n^{r1}]$\\

$c_{2}$&$[\mu^{2};m^{2}]$&$\beta^{1}_{2}$&$[\nu^{12};n^{12}]$&$\beta^{2}_{2}$&$[\nu^{22};n^{22}]$&$\cdots$&$\beta^{r}_{2}$&$[\nu^{r2};n^{r2}]$\\
$\vdots$&$\vdots$&$\vdots$&$\vdots$&$\vdots$&$\vdots$&&$\vdots$&$\vdots$\\
 $c_{p}$&$[\mu^{p};m^{p}]$&$\beta^{1}_{l_{1}}$&$[\nu^{1l_{1}};n^{1l_{1}}]$&$\beta^{2}_{l_{2}}$&$[\nu^{2l_{2}};n^{2l_{2}}]$&&$\beta^{r}_{l_{r}}$&$[\nu^{rl_{r}};n^{rl_{r}}]$\\
\hline
\end{tabular}.
\end{center}Then we have the followings.
\begin{enumerate}
\item We have 
\[
E(\frac{\alpha_{i}}{2}x^{2};*,0)P\sim P.
\]
for $i=1,\ldots,r$ and for any complex number $*$.
\item Let us take $i\in\{1,\ldots,r\}$ and $j\in\{1,\ldots,l_{i}\}$ and
      fix them. If $\nu^{ij}_{k}-f(\xi)\notin\Z\backslash\{1\}$ for all $k=1,\ldots,t_{ij}$,
      then we have
\[
E(\frac{\alpha_{i}}{2}x^{2};\beta^{i}_{j};-f(\xi))E(\frac{\alpha_{i}}{2}x^{2};\-\beta^{i}_{j};f(\xi))P\sim P.
\]
\end{enumerate}
\end{prop}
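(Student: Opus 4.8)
The plan is to strip off the quadratic twist and reduce everything to Proposition \ref{inverse of euler transform}. Set $p(x)=\frac{\alpha_{i}}{2}x^{2}$. From the definition of the twisted Euler transform together with the additivity $\mathrm{Ade}(p_{1})\circ\mathrm{Ade}(p_{2})=\mathrm{Ade}(p_{1}+p_{2})$ (so that $\mathrm{Ade}(-p)\circ\mathrm{Ade}(p)=\mathrm{id}$), one has
\[
E(\tfrac{\alpha_{i}}{2}x^{2};*,0)=\mathrm{Ade}(p)\circ E(*,0)\circ\mathrm{Ade}(-p)
\]
and
\[
E(\tfrac{\alpha_{i}}{2}x^{2};\beta^{i}_{j},-f(\xi))\circ E(\tfrac{\alpha_{i}}{2}x^{2};\beta^{i}_{j},f(\xi))=\mathrm{Ade}(p)\circ E(\beta^{i}_{j},-f(\xi))\circ E(\beta^{i}_{j},f(\xi))\circ\mathrm{Ade}(-p),
\]
the inner copies of $\mathrm{Ade}(\pm p)$ having cancelled. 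Since $\mathrm{Ade}(p)$ is an automorphism of $W(x,\xi)$ fixing $\C(x,\xi)$ pointwise, it preserves the equivalence $\sim$; hence it is enough to prove the two corresponding untwisted statements for the conjugated operator $Q:=\mathrm{Ade}(-p)P=P(x,\partial+\alpha_{i}x)$ and then apply $\mathrm{Ade}(p)$.

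First I would record the local datum of $Q$. Removing the $e^{\frac{\alpha_{i}}{2}x^{2}}$-part of the exponential turns the $e^{\frac{\alpha_{i}}{2}x^{2}+\beta^{i}_{j}x}$-twisted exponents $[\nu^{ij};n^{ij}]$ of $P$ into $e^{\beta^{i}_{j}x}$-twisted exponents of $Q$ (for $j=1,\ldots,l_{i}$), while the remaining centres $\alpha_{i'}$ with $i'\neq i$ contribute a surviving slope-$2$ part. Because the $\nu^{ij}_{k}$ belong to the indeterminates of $\C(\xi)$, these exponents and all their pairwise differences avoid $\Z$. For the first assertion, Proposition \ref{inverse of euler transform}(1) then yields $E(*,0)Q\sim Q$, and conjugating by $\mathrm{Ade}(p)$ gives $E(\tfrac{\alpha_{i}}{2}x^{2};*,0)P\sim P$. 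For the second assertion, the hypothesis $\nu^{ij}_{k}-f(\xi)\notin\Z\backslash\{1\}$ is exactly the condition required to apply Proposition \ref{inverse of euler transform}(2) to $Q$ at the centre $\beta^{i}_{j}$ with parameter $f(\xi)$, giving $E(\beta^{i}_{j},-f(\xi))\circ E(\beta^{i}_{j},f(\xi))Q\sim Q$; conjugating back finishes the argument.

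The main obstacle is that $Q$ is not literally of the rank-$1$ shape demanded in Proposition \ref{inverse of euler transform}: its slope-$2$ part coming from the centres $\alpha_{i'}$, $i'\neq i$, spoils the factored leading form. To close this gap I would re-run the proof of that proposition directly for $Q$. The key observation is that under $\mathcal{L}^{-1}$ the surviving slope-$2$ part remains an irregular singularity at infinity (Corollary \ref{laplace inverse transform of rank2}) and so produces no finite regular singular point, whereas each $e^{\beta^{i}_{j}x}$-part produces a regular singular point of $\mathcal{L}^{-1}\mathrm{R}Q$ at $x=-\beta^{i}_{j}$ whose exponents, by Corollary \ref{laplace inverse transform of rank1}, are non-integers. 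Proposition \ref{reduce degree} then shows that $\mathrm{R}$ removes no factor $(x-c)^{m}$ at these points, i.e. $\mathrm{R}\mathcal{L}^{-1}\mathrm{R}Q=\mathcal{L}^{-1}\mathrm{R}Q$, so that $E(*,0)Q=\mathcal{L}\,\mathrm{R}\,\mathcal{L}^{-1}\mathrm{R}Q\sim Q$ exactly as before; combined with the invertibility of the addition $\mathrm{RAd}((x+\beta^{i}_{j})^{\pm f(\xi)})$ guaranteed by the non-integrality condition $\nu^{ij}_{k}-f(\xi)\notin\Z\backslash\{1\}$, this also yields the second assertion.
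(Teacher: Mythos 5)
Your proposal is correct and is essentially the paper's own proof: the paper likewise strips off the quadratic twist (this is exactly the definition $E(p(x);\beta,f(\xi))=\mathrm{Ade}(p)\circ E(\beta,f(\xi))\circ\mathrm{Ade}(-p)$, so the inner twists telescope) and then applies Proposition \ref{inverse of euler transform} to the conjugated operator $P(x,\partial+\alpha_{i}x)$. The only divergence is your final paragraph: the ``obstacle'' you raise is illusory, since the hypothesis of Proposition \ref{inverse of euler transform} is not a rank-$1$ condition --- the arbitrary polynomials $P_{k}(\partial)$ in the lower $x$-degree terms absorb the surviving slope-$2$ parts --- and the paper's one-line proof consists precisely of writing $\mathrm{R}P(x,\partial+\alpha_{i}x)=x^{N}\prod_{j=1}^{l_{i}}(\partial-\beta^{i}_{j})^{n^{i}_{j}}+\sum_{k}x^{N-k}\prod_{j=1}^{l_{i}}(\partial-\beta^{i}_{j})^{\max\{n^{i}_{j}-k,0\}}P_{k}(\partial)$ in that form, so your re-running of that proposition's proof is harmless but redundant: it reproduces the paper's argument rather than replacing it.
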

\begin{proof}
By the assumption, we can write
\[
\mathrm{R}P(x,\partial+\alpha_{i}x)=x^{N}\prod_{j=1}^{l_{i}}(\partial-\beta^{i}_{j})^{n^{i}_{j}}+\sum_{k=1}^{N}x^{N-k}\prod_{j=1}^{l_{i}}(\partial-\beta^{i}_{j})^{\max\{n^{i}_{j}-k\}}P_{k}(\partial)
\]
for $P_{k}(x)\in \C[x]$ and $N=n-n_{i}+\sum_{k=1}^{p}\sum_{l=1}^{s_{k}}m^{k}_{l}$. Hence we can apply Proposition \ref{inverse of euler transform}.
\end{proof}
Although the twisted Euler transform in Corollary \ref{reducible scheme} does not
satisfy the assumption of 2 in Proposition \ref{inversion of Euler
transform}, we can show the following.
\begin{prop}
Let us take $P(x,\partial)$ as in Definition \ref{local datum} with the
 table of local datum,
\begin{center}
\begin{tabular}{|c|c||c|c|}
\hline
\multicolumn{2}{|c||}{}&\multicolumn{2}{|c|}{$\alpha$}\\
\hline
$c_{1}$&$[\mu^{1};m^{1}]$&$\beta$&$[(\nu);(n)]$\\
$c_{2}$&$[\mu^{2};m^{2}]$&&\\
$\vdots$&$\vdots$&&\\
 $c_{p}$&$[\mu^{p};m^{p}]$&&\\
\hline
\end{tabular}.
\end{center}
Then we have
\[
 E(\frac{\alpha}{2}x^{2};\beta,-\nu)E(\frac{\alpha}{2}x^{2};\beta,\nu)P\sim P.
\]
\end{prop}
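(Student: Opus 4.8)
The plan is to peel off the two $e^{p(x)}$-twistings, reduce the assertion to a roundtrip identity for the ordinary Euler transform, and then run the inversion argument of Proposition \ref{mcirred} and Proposition \ref{inverse of euler transform}, paying special attention to the single resonant exponent that prevents a direct appeal to Proposition \ref{inversion of Euler transform}. First I would use the definition of the twisted Euler transform together with Remark \ref{Euler and twisted Euler}, writing each factor as $E(\frac{\alpha}{2}x^{2};\beta,c)=\mathrm{Ade}(\frac{\alpha}{2}x^{2})\mathrm{Ade}(\beta x)E(0,c)\mathrm{Ade}(-\beta x)\mathrm{Ade}(-\frac{\alpha}{2}x^{2})$. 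Since $\mathrm{Ade}(p)\mathrm{Ade}(q)=\mathrm{Ade}(p+q)$, the inner twistings telescope and the composition collapses to
\[
E(\tfrac{\alpha}{2}x^{2};\beta,-\nu)E(\tfrac{\alpha}{2}x^{2};\beta,\nu)=\mathrm{Ade}(\tfrac{\alpha}{2}x^{2}+\beta x)\,E(0,-\nu)E(0,\nu)\,\mathrm{Ade}(-\tfrac{\alpha}{2}x^{2}-\beta x).
\]
Because $\mathrm{Ade}$ is an automorphism fixing $x$, it respects $\sim$, so it is enough to prove $E(0,-\nu)E(0,\nu)P''\sim P''$ for $P'':=\mathrm{Ade}(-\frac{\alpha}{2}x^{2}-\beta x)P=P(x,\partial+\alpha x+\beta)$. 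By Proposition \ref{addition and exponents at infinity} and Proposition \ref{addition and exponents}, $P''$ is Fuchsian: it has a regular singular point at $x=\infty$ carrying the single semi-simple block $[\nu]_{n}$ and the unchanged exponents $\{[0]_{m^{i}_{0}}[\mu^{i};m^{i}]\}$ at each $c_{i}$.

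Next I would expand the roundtrip in the form used to prove Proposition \ref{mcirred}. Setting $Q=E(0,\nu)P''$ and unwinding the definitions, $E(0,-\nu)Q=\mathcal{L}\,\mathrm{RAd}(x^{\nu})\,\mathcal{L}^{-1}\,\mathrm{R}\,Q$ while $Q=\mathcal{L}\,\mathrm{RAd}(x^{-\nu})\,\mathcal{L}^{-1}\,\mathrm{R}\,P''$. The plan is to insert the identity $E(*,0)Q\sim Q$ and then collapse the two twistings by the general cancellation $\mathrm{RAd}(x^{\nu})\mathrm{RAd}(x^{-\nu})Q'\sim Q'$, exactly as in the last display of the proof of Proposition \ref{mcirred}; the remaining $\mathcal{L}$'s cancel and one is left with $\sim P''$. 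To license $E(*,0)Q\sim Q$ I would compute the table of local datum of $Q$ from Theorem \ref{addition and middle convolution} and pass to the Fourier-Laplace picture via Proposition \ref{laplace inverse transform of regular point}: $\mathcal{L}^{-1}\mathrm{R}P''$ gains a regular singular point at $x=0$ with exponents $\{[0]_{N-n},[\nu-1]_{n}\}$ ($N=\deg\mathrm{R}P''$), and $\mathrm{Ad}(x^{-\nu})$ shifts these to $\{[-\nu]_{N-n},[-1]_{n}\}$ by Proposition \ref{addition and exponents}. All exponents except those in $[-1]_{n}$ are non-integral with non-integral mutual differences, so for them the reduction behaves precisely as in the generic case of Proposition \ref{inverse of euler transform}.

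The crux, and the main obstacle, is the block $[-1]_{n}=\{-1,0,\dots,n-2\}$: its exponents $0,\dots,n-2$ produce $n-1$ holomorphic solutions at $x=0$, and this is exactly the resonance $\nu^{ij}_{1}-\nu=0$ that makes Proposition \ref{inversion of Euler transform}(2) inapplicable. Here I would argue by hand, as in the proof of Lemma \ref{reducibility}: semi-simplicity forces these $n-1$ solutions to be logarithm-free (Proposition \ref{strict regular}), so by Proposition \ref{reduce degree} the operator $\mathrm{Ad}(x^{-\nu})\mathcal{L}^{-1}\mathrm{R}P''$ is divisible by $x^{\,n-1}$ and $\mathrm{R}$ removes exactly this factor, yielding the order of $Q$ predicted by Theorem \ref{addition and middle convolution}. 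Running the four steps of $E(0,-\nu)$ in reverse, $\mathrm{Ad}(x^{\nu})$ restores the block to $[\nu-1]_{n}$ at $x=0$ and the reduced representative regenerates precisely the $n-1$ solutions that were factored out, so that $\mathrm{ord}\,E(0,-\nu)E(0,\nu)P''=\mathrm{ord}\,P''=n$ and the full formal solution space is recovered. With orders and local solution spaces matching, the two operators share a reduced representative, giving $E(0,-\nu)E(0,\nu)P''\sim P''$; conjugating back by $\mathrm{Ade}(\frac{\alpha}{2}x^{2}+\beta x)$ then yields the proposition. The delicate point is this resonant bookkeeping: confirming that the $x^{\,n-1}$ factored off in the forward transform is exactly compensated on the return, rather than causing the genuine loss of solutions that a non-semisimple or logarithmic exponent would.
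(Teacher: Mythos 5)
Your skeleton is the same as the paper's: untwist by conjugating with $\mathrm{Ade}$ (the paper normalizes $\alpha=0$ and keeps $\beta$; you also remove $\beta$, which is harmless since $\mathrm{Ade}$ commutes with $\mathrm{R}$ up to scalar), expand both Euler transforms, drop the inserted $\mathcal{L}^{-1}\mathrm{R}\mathcal{L}$, cancel $\mathrm{RAd}(x^{\nu})\mathrm{RAd}(x^{-\nu})$, and finish with $E(*,0)P''\sim P''$. Your exponent count $\{[-\nu]_{N-n},[-1]_{n}\}$ for the intermediate operator is also the correct application of Proposition \ref{addition and exponents} (the paper's table prints $(0,-1)$ where $(-\nu,-1)$ is meant, though the multiplicities agree). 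The problem is in how you close the argument.

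The gap is twofold. First, your final inference --- ``with orders and local solution spaces matching, the two operators share a reduced representative'' --- is not a valid deduction: coincidence of order and of all local exponent data never forces two operators to be proportional (Heun operators with different accessory parameters are the standard counterexample), and an assertion of that kind is precisely the sort of rigidity statement this paper only obtains later (Theorem \ref{rigid reduction}), so invoking it here is both unjustified and circular. Second, the bookkeeping you concentrate on --- that the forward $\mathrm{RAd}(x^{-\nu})$ divides out $x^{n-1}$ and that this must be ``exactly compensated on the return'' --- is not where the resonance can hurt you: the identity $\mathrm{RAd}(x^{\nu})\mathrm{RAd}(x^{-\nu})T\sim T$ recalled in the proof of Proposition \ref{mcirred} undoes that removal exactly, whatever was removed, with no solution-space accounting. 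What actually must be verified, and what the paper's table is there to verify, is that the \emph{middle} reduced representative --- the $\mathrm{R}$ applied to $\mathcal{L}Q'=E(0,\nu)P''$ at the start of the second transform --- removes nothing; this is governed by the exponents of $E(0,\nu)P''$ at the finite points $c_{1},\dots,c_{p}$, equivalently by the non-integrality of the shifted data $\mu^{i}_{j}+\nu+1$ carried by the $e^{c_{i}x}$-classes of the intermediate operator (the entries $\mu'^{i}$ in the paper's table), not by the block $[-1]_{n}$ at $x=0$. Your proposal never checks this data. Once it is checked (by the argument of Proposition \ref{inverse of euler transform}, as the paper does), the whole composition collapses algebraically to $\mathcal{L}\mathrm{R}\mathcal{L}^{-1}\mathrm{R}P''=E(*,0)P''\sim P''$, and your last paragraph becomes unnecessary.
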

\begin{proof}
Without loss of the generality, we can assume $\alpha=0$. The table of local datum of
 $\mathrm{RAd}((x+\beta)^{-\nu})\mathcal{L}\mathrm{R}P$ is 
\begin{center}
\begin{tabular}{|c|c||c|c|}
\hline
\multicolumn{2}{|c||}{}&\multicolumn{2}{|c|}{0}\\
\hline
$-\beta$&$[(0,-1);(N,n)]$&$c_{1}$&$[\mu'^{1};m^{1}]$\\
&&$c_{2}$&$[\mu'^{2};m^{1}]$\\
&&$\vdots$&$\vdots$\\
&& $c_{p}$&$[\mu'^{p};m^{p}]$\\
\hline
\end{tabular}.
\end{center}
Here $N=\sum_{i=1}^{r}\sum_{j=1}^{s_{i}}m^{i}_{j}-n$ and 
\[
 \mu'^{i}=(\mu^{i}_{1}+\nu+1,\ldots,\mu^{i}_{s_{i}}+\nu+1)
\]
for $i=1,\ldots,r$. Hence by the same
 argument as in Proposition \ref{inverse of euler transform}, we can
 show that
\[
 (\mathcal{L}^{-1}\mathrm{R}\mathcal{L})\mathrm{RAd}((x+\beta)^{-\nu})\mathcal{L}^{-1}P\sim \mathrm{RAd}((x+\beta)^{-\nu})\mathcal{L}^{-1}P.
\] 
Hence we have
\begin{align*}
 E(\beta,-\nu)E(\beta,\nu)P&=\mathcal{L}\mathrm{RAd}((x+\beta)^{\nu})\mathcal{L}^{-1}\mathrm{R}\mathcal{L}\mathrm{RAd}((x+\beta)^{-\nu})\mathcal{L}^{-1}\mathrm{R}P\\
&=\mathcal{L}\mathrm{RAd}((x+\beta)^{\nu})\mathrm{RAd}((x+\beta)^{-\nu})\mathcal{L}^{-1}P\\
&=\mathcal{L}\mathrm{R}\mathcal{L}^{-1}P=E(*,0)P\\
&\sim P.
\end{align*}
\end{proof}
\section{Kac-Moody root system}
P. Boalch found a correspondence between quiver varieties and moduli
spaces of  meromorphic
connections on  vector bundles over the  Riemann sphere of the forms
\[
 (\frac{A}{z^{3}}+\frac{B}{z^{2}}+\frac{C}{z})dz.
\]
He studied the existence of these meromorphic connections through the
theory of representations of quiver varieties which is first studied by W.
Crawley-Boevey in \cite{C-B1}.

In this section, as an analogue of this result of Boalch, we attach a
differential operator considered in Definition \ref{local datum} to  a
Kac-Moody Lie algebra and an element of the root lattice  of this algebra. And we show
the equivalence between twisted Euler transforms and additions on
differential equations and the action of Weyl group on the corresponding
element of the root lattice.

Let us take $P(x,\partial)\in W[x,\xi]$ with the table of local datum
\begin{center}
\begin{tabular}{|c|c||c|c|c|c|c|c|c|}
\hline
\multicolumn{2}{|c||}{}&\multicolumn{2}{|c|}{$\alpha_{1}$}&\multicolumn{2}{c|}{$\alpha_{2}$}&$\cdots$&\multicolumn{2}{c|}{$\alpha_{r}$}\\
\hline
$c_{1}$&$[\mu^{1};m^{1}]$&$\beta^{1}_{1}$&$[\nu^{11};n^{11}]$&$\beta^{2}_{1}$&$[\nu^{21};n^{21}]$&$\cdots$&$\beta^{r}_{1}$&$[\nu^{r1};n^{r1}]$\\

$c_{2}$&$[\mu^{2};m^{2}]$&$\beta^{1}_{2}$&$[\nu^{12};n^{12}]$&$\beta^{2}_{2}$&$[\nu^{22};n^{22}]$&$\cdots$&$\beta^{r}_{2}$&$[\nu^{r2};n^{r2}]$\\
$\vdots$&$\vdots$&$\vdots$&$\vdots$&$\vdots$&$\vdots$&&$\vdots$&$\vdots$\\
 $c_{p}$&$[\mu^{p};m^{p}]$&$\beta^{1}_{l_{1}}$&$[\nu^{1l_{1}};n^{1l_{1}}]$&$\beta^{2}_{l_{2}}$&$[\nu^{2l_{2}};n^{2l_{2}}]$&&$\beta^{r}_{l_{r}}$&$[\nu^{rl_{r}};n^{rl_{r}}]$\\
\hline
\end{tabular}.
\end{center}
We fix this $P(x,\partial)$ through this section.

Let $\mathfrak{h}$ be the complex vector space with the basis 
\[
\Pi=\{v^{ij}_{k}\mid i=0,\ldots,r, j=1,\ldots, l_{i}, k=1,\ldots,t^{ij}\}.
\]
Here we put $l_{0}=p$ and $t^{0j}=s_{j}$. We define the non-degenerate symmetric bilinear form on $\mathfrak{h}$ as follows,
\[
(v^{ij}_{k},v^{lm}_{n})=
\begin{cases}
2&\text{if }(i,j,k)=(l,m,n)\\
-1&\text{if }(i,j)=(l,m)\text{ and } |k-n| =1\\
-1&\text{if }(k,n)=(1,1)\text{ and } i\neq j\\
0&\text{otherwise}
\end{cases}.
\]
 If for $s=(i_{s},j_{s},k_{s})\in I=\{(i,j,k) \mid i=0,\ldots,r, j=1,\ldots, l_{i}, k=1,\ldots,t^{ij}\}$, we write $v_{s}=v^{i_{s}j_{s}}_{k_{s}}$. We can define the generalized Cartan matrix,
 \[
 A=\left(\frac{2(v_{s},v_{t})}{(v_{s},v_{s})}\right)_{s\in I,t\in I}.
 \]
Let $\mathfrak{g}(A)$ be the Kac-Moody Lie algebra with the above generalized Cartan matrix $A$. According to the usual terminology, we call $\Pi$ the root basis, elements from $\Pi$ are called simple roots and $\Z$-lattice generated by $\Pi$, i.e.
\[
Q=\sum_{(i,j,k)\in I}\Z v^{ij}_{k}
\]
is called the root lattice. Also we define the positive root lattice
$$Q^{+}=\sum_{(i,j,k)\in I}\Z_{\ge 0} v^{ij}_{k}.$$ The height of an
element of the root lattice $\alpha=\sum_{i,j,k\in
I}x^{ij}_{k}v^{ij}_{k}$ is defined by
\[
 \mathrm{ht}(\alpha)=\sum_{(i,j,k)\in I}x^{ij}_{k}.
\]
Also the support of $\alpha$ is defined by
\[
 \supp\alpha=\{v^{ij}_{k}\in\Pi\mid x^{ij}_{k}\neq 0\}.
\]
We say that the subset $L\subset\Pi$ is connected if the decomposition
$L_{1}\cup L_{2}=L$ with $L_{1}\neq \emptyset$ and $L_{2}\neq \emptyset$
always implies the existence of $v_{i}\in L_{i}$ satisfying
$(v_{1},v_{2})\neq 0$.

We have the following root space decomposition of $\g(A)$ with respect to $\mathfrak{h}$,
\[
\g(A)=\bigoplus_{\alpha\in Q}\g_{\alpha}
\]
where $\g_{\alpha}=\{X\in\g(A)\mid [H,X]=(\alpha,H)X\text{ for all }H\in\mathfrak{h}\}$ is the root space attached to $\alpha$. The root space is $\Delta=\{\alpha\in Q\mid \g_{\alpha}\neq\{0\}\}$ and we call elements of $\Delta$ roots. 

The reflections on $\mathfrak{h}$ with respect to simple roots $v^{ij}_{k}$, so-called simple reflections, are defined by
\[
r^{ij}_{k}\colon \mathfrak{h}\ni H\longmapsto r^{ij}_{k}(H)=H-\frac{2(H,v^{ij}_{k})}{(v^{ij}_{k},v^{ij}_{k})}v^{ij}_{k}=H-(H,v^{ij}_{k})v^{ij}_{k}.
\]
The Weyl group $W$ is the group generated by all simple reflections. 

A root $\alpha\in \Delta$ is called real root if there exists $w\in W$
such that $w(\alpha)\in \Pi$. We denote the set of real roots by
$\Delta^{re}$. A root which is not real root is called imaginary
root. We denote the set of all imaginary roots by $\Delta^{im}$. Hence
there is a decomposition of the set of roots, $\Delta =\Delta^{re}\cup \Delta^{im}.$ If the Cartan matrix is symmetrizable, we can see that 
\begin{align*}
&\Delta^{re}=\{\alpha\in\Delta\mid (\alpha,\alpha)>0\},&\Delta^{im}=\{\alpha\in\Delta\mid (\alpha,\alpha)\le 0\}.
\end{align*}
In our case the Cartan matrix $A$ is symmetric. Hence moreover we have 
\[
\Delta^{re}=\{\alpha\in\Delta\mid (\alpha,\alpha)=2\}.
\]

For fundamental things about Kac-Moody Lie algebra, we refer the
standard text book \cite{Kac}.
 
For the above $P\in W[x,\xi]$, we can define the element $\alpha_{P}\in
Q^{+}$ associated with $P(x,\partial)$ as follows. Let us put
\[
\tilde{n}^{ij}_{k}=
\begin{cases}
\sum_{l=k}^{s_{j}}m^{j}_{l}&\text{if } i=0\\
\sum_{l=k}^{t_{ij}}n^{ij}_{l}&\text{if } i=1,\ldots,r 
\end{cases}.
\]
Then $\alpha_{P}\in Q^{+}$ is defined by 
\[
\alpha_{P}=\sum_{(i,j,k)\in I}\tilde{n}^{ij}_{k}v^{ij}_{k}.
\]
\begin{rem}This correspondence between $P\in W[x,\xi]$ and
 $\alpha_{P}\in Q^{+}$ is not unique. Indeed,  for $\nu^{ij}_{k}$
and $n^{ij}_{k}$, permutations with respect to the index $k=1,\ldots,t^{ij}$ do not change local solutions of $P$. For $\mu^{i}_{j}$ and $m^{i}_{j}$, permutations with respect to the index $j=1,\ldots,s_{i}$ do not change local solutions of $P$ as well.  

\end{rem}
\begin{exa}

If $P(x,\partial)$ has the table of local datum,
\begin{center}
\begin{tabular}{|c|c||c|c|c|c|}
\hline
\multicolumn{2}{|c||}{}&\multicolumn{2}{|c|}{$\alpha_{1}$}&\multicolumn{2}{|c|}{$\alpha_{2}$}\\
\hline
$c_{1}$&$[(\mu^{1}_{1});(1)]$&$\beta_{1}$&$[(\nu^{1}_{1});(1)]$&$\beta_{2}$&$[(\nu^{2}_{1});(1)]$\\
\hline
\end{tabular},
\end{center}
the corresponding Kac-Moody Lie algebra has the following dynkin
 diagram,
\begin{center}
\includegraphics{test.1}.
\end{center}
If $P(x,\partial)$ has the table of local datum,
\begin{center}
\begin{tabular}{|c|c||c|c|}
\hline
\multicolumn{2}{|c||}{}&\multicolumn{2}{|c|}{$\alpha$}\\
\hline
$c_{1}$&$[(\mu^{1}_{1},\mu^{1}_{2});(1,1)]$&$\beta_{1}$&$[(\nu^{1}_{1},\nu^{2}_{1});(1,1)]$\\
$c_{2}$&$[(\mu^{2}_{1},\mu^{2}_{1});(1,1)]$&$\beta_{2}$&$[(\nu^{2}_{1},\nu^{2}_{2});(1,1)]$\\
&&$\beta_{3}$&$[(\nu^{3}_{1},\nu^{3}_{1});(1,1)]$\\
\hline
\end{tabular},
\end{center}
the corresponding Kac-Moody Lie algebra has the following dynkin
 diagram,
\begin{center}
\includegraphics{test2.1}.
\end{center}
\end{exa}
\begin{thm}\label{reflection middle convolution}
We retain the above notations. 
\begin{enumerate}
\item For $P\in W[x,\xi]$ and $\alpha_{P}\in Q^{+}$ defined as above, we have
\[
\mathrm{idx}\,P=(\alpha_{P},\alpha_{P}).
\]
\item Let us assume $n-n^{ij}_{k}>0$ or $N_{i}-n^{i}_{j}>0$ where
      $N_{i}=\sum_{k=1}^{l_{0}}\sum_{l=1}^{t_{0k}}m^{k}_{l}+(n-n^{i})$. If we put $Q=E(\frac{\alpha_{i}}{2}x^{2}+\beta^{i}_{j}x;\beta^{i}_{j},\nu^{ij}_{1}-1)P(x,\partial)$, then we have
\[
\alpha_{Q}=r^{ij}_{1}(\alpha_{P}).
\]
\item If we put $Q=\mathrm{RAd}((x-c_{i})^{-\mu^{i}_{1}})P$, then we have
\[
\alpha_{Q}=r^{0i}_{1}(\alpha_{P}).
\]
\item For $k\ge2,$  reflections $r^{ij}_{k}(\alpha_{P})$ correspond
      following permutations of the table of local datum  of $P$,
\[
(\mu^{y}_{z},m^{y}_{z})\mapsto
\begin{cases}
(\mu^{y}_{z+1},m^{y}_{z+1})&\text{if } i=0, j=y \text{ and }k=z\\
(\mu^{y}_{z+1},m^{y}_{z-1})&\text{if } i=0, j=y \text{ and }k=z-1\\
(\mu^{y}_{z},m^{y}_{z})&\text{otherwise}
\end{cases} ,
\]
\[
(\nu^{xy}_{z},n^{xy}_{z})\mapsto
\begin{cases}
(\nu^{xy}_{z+1},n^{xy}_{z+1})& \text{if } (x,y,z)=(i,j,k)\\
(\nu^{xy}_{z-1},n^{xy}_{z-1})& \text{if } (x,y,z-1)=(i,j,k)\\
(\nu^{xy}_{z},n^{xy}_{z})& \text{otherwise } 
\end{cases}.
\]
Here $\mu^{y}_{z}$ and $\nu^{xy}_{z}$ are exponents of local solutions of $P$ and $m^{y}_{z}$ and $n^{xy}_{z}$ are multiplicities of them respectively.
\end{enumerate}
\end{thm}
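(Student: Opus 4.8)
The plan is to prove the four assertions in turn, reading the change of the table of local datum from Theorem~\ref{addition and middle convolution} and comparing it with the formula $r(\alpha_P)=\alpha_P-(\alpha_P,v)\,v$ for a simple reflection. Before any computation I would fix the shape of the diagram encoded by the bilinear form: the simple roots split into blocks indexed by $i=0,1,\ldots,r$, where $i=0$ carries the $p$ regular points (so $l_0=p$, $t^{0j}=s_j$) and $i\ge 1$ carries the data attached to $\alpha_i$, each block being a disjoint union of chains $v^{ij}_1-v^{ij}_2-\cdots-v^{ij}_{t^{ij}}$. The third clause of the bilinear form makes the \emph{first} nodes of arms lying in \emph{different} blocks pairwise adjacent, so that the first nodes form a complete multipartite graph whose parts are the blocks. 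Writing $S_i=\sum_j \tilde{n}^{ij}_1$, I would record the two identities $\sum_x S_x=(p+1)n-\sum_a m^a_0$ and $\sum_{x\ne i}S_x=N_i$, where $N_i=\sum_a\sum_{b\ge 1}m^a_b+(n-n_i)$ is exactly the integer appearing in Theorem~\ref{addition and middle convolution}; these are the bridge between the root side and the analytic side.

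For assertion~1 I would expand $(\alpha_P,\alpha_P)=\sum_{s,t}\tilde{n}_s\tilde{n}_t(v_s,v_t)$ into the diagonal part $2\sum_s\tilde{n}_s^2$, the in-arm part $-2\sum_{\mathrm{arms}}\sum_k\tilde{n}^{ij}_k\tilde{n}^{ij}_{k+1}$, and the cross-block part $-2\sum_{x<x'}S_xS_{x'}$. For one chain the telescoping identity $2\sum_k d_k^2-2\sum_k d_kd_{k+1}=d_1^2+\sum_k(d_k-d_{k+1})^2$ (with $d_{t+1}=0$) collapses the first two parts into $(\tilde{n}^{ij}_1)^2+\sum_k(\mathrm{mult})^2$ per arm, while the cross-block part equals $\sum_x S_x^2-(\sum_x S_x)^2$. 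Substituting the two sums above, every mixed term cancels and the total becomes $\sum_{a,b}(m^a_b)^2+\sum_{i,j,k}(n^{ij}_k)^2+\sum_i n_i^2+\sum_{i,j}(n^i_j)^2-(p+1)n^2=\mathrm{idx}\,P$. This is routine bookkeeping; the only subtlety is that the regular arms contribute $(n-m^a_0)^2$, not $n^2$, as their first-node square.

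For assertions~2 and 3 I would read the new multiplicities off Theorem~\ref{addition and middle convolution}. In case~2 only the first multiplicity of arm $(i,j)$ changes, from $n^{ij}_1$ to $N_i-n^i_j$, so only $\tilde{n}^{ij}_1=n^i_j$ moves, to $N_i-n^{ij}_1$; since the first-node neighbour sum is $\sum_{x\ne i}S_x=N_i$, I get $(\alpha_P,v^{ij}_1)=2n^i_j-(n^i_j-n^{ij}_1)-N_i=n^i_j+n^{ij}_1-N_i$, so the displacement is $-(\alpha_P,v^{ij}_1)v^{ij}_1$, i.e.\ $\alpha_Q=r^{ij}_1(\alpha_P)$. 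Case~3 I expect to be the main obstacle, because Theorem~\ref{addition and middle convolution}(1) changes only the exponent \emph{values}, so naively $\alpha_P$ would not move at all. The resolution is that $\mathrm{RAd}((x-c_i)^{-\mu^i_1})$ sends the first nonzero exponent $\mu^i_1$ to $0$, thereby interchanging the role of the zero block $[0]_{m^i_0}$ with the block $[\mu^i_1]_{m^i_1}$; after the relabelling permitted by the Remark following the definition of $\alpha_P$, the multiplicity of the zero exponent becomes $m^i_1$, so $\tilde{n}^{0i}_1$ changes from $n-m^i_0$ to $n-m^i_1$ while every deeper partial sum $\tilde{n}^{0i}_k$ ($k\ge 2$) is preserved. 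As the regular block has neighbour sum $\sum_{x\ne 0}S_x=n$, one finds $(\alpha_P,v^{0i}_1)=2(n-m^i_0)-(n-m^i_0-m^i_1)-n=m^i_1-m^i_0$, matching the displacement $m^i_0-m^i_1$. The delicate points I must verify here are that the operator keeps its order under $\mathrm{RAd}$ and that the block interchange disturbs no deeper coefficient.

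For assertion~4, an internal node $v^{ij}_k$ ($k\ge 2$) has no cross-block edge, so $(\alpha_P,v^{ij}_k)=2\tilde{n}^{ij}_k-\tilde{n}^{ij}_{k-1}-\tilde{n}^{ij}_{k+1}=n^{ij}_k-n^{ij}_{k-1}$ by telescoping. The reflection then exchanges the multiplicities of the two blocks adjacent to $v^{ij}_k$, that is, it is precisely an adjacent transposition of exponent blocks within the arm. Since reordering the exponent blocks of a single singular direction does not change the operator $P$ itself (the non-uniqueness recorded after the definition of $\alpha_P$), this reflection is realised by a pure relabelling, which is the content of the stated permutation formulas (matching them requires only tracking the convention that attaches a node to its adjacent block). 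Putting the four cases together shows that each simple reflection generating $W$ is realised either by a twisted Euler transform (case~2), by an addition at a regular point (case~3), or by a trivial relabelling (case~4), and that $(\cdot,\cdot)$ computes $\mathrm{idx}$, which is exactly what the theorem asserts.
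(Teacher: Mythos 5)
Your proposal is correct and takes essentially the same route as the paper: the paper's own proof consists of a lemma (proved by ``direct computation'') recording exactly the inner-product formulas $(\alpha,\alpha)$ and $(\alpha,v^{ij}_k)$ that you derive by telescoping, after which the theorem is read off by comparison of $r^{ij}_k(\alpha_P)=\alpha_P-(\alpha_P,v^{ij}_k)v^{ij}_k$ with the tables in Theorem~\ref{addition and middle convolution}, just as you do. Your write-up in fact makes explicit the points the paper leaves silent --- the $(n-m^{a}_{0})^{2}$ first-node contribution in part 1, the identity $\sum_{x\neq i}S_x=N_i$, and the interchange of the zero block with $[\mu^i_1]_{m^i_1}$ in part 3.
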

This theorem immediately follows from the following lemma.
\begin{lem}
Let us take $\alpha\in \mathfrak{h}$ such that 
\[
\alpha=\sum_{(i,j,k)\in I}c^{ij}_{k}v^{ij}_{k}
\]
for $c^{ij}_{k}\in \C$. The we have the following equations,
\begin{align*}
(\alpha,\alpha)=&-(l_{0}+1)(\sum_{i=1}^{r}\sum_{j=1}^{l_{i}}c^{ij}_{1})^{2}+\sum_{i=1}^{r}(\sum_{j=1}^{l_{i}}c^{ij}_{1})^{2}+\sum_{i=1}^{r}\sum_{j=1}^{l_{i}}(c^{ij}_{1})^{2}\\
&+\sum_{i=1}^{l_{0}}(\sum_{j=1}^{r}\sum_{k=1}^{l_{j}}c^{jk}_{1}-c^{0i}_{1})+\sum_{i=0}^{r}\sum_{j=1}^{l_{i}}\sum_{k=1}^{t_{ij}}(c_{k}^{ij}-c_{k+1}^{ij})^{2},
\end{align*}
\begin{multline*}
(\alpha,v^{i_{0}j_{0}}_{k_{0}})=\\
\begin{cases}
-(\displaystyle\sum_{i=1}^{r}\sum_{j=1}^{l_{i}}\sum_{k=1}^{t_{ij}}c^{ij}_{k}-\sum_{i=1}^{l_{i_{0}}}\sum_{j=1}^{t_{i_{0}i}}c^{i_{0}j}_{k})+c^{i_{0}j_{0}}_{1}+(c^{i_{0}j_{0}}_{1}-c^{i_{0}j_{0}}_{2})&\text{if } k_{0}=1\\
(c^{i_{0}j_{0}}_{k_{0}}-c^{i_{0}j_{0}}_{k_{0}-1})+(c^{i_{0}j_{0}}_{k_{0}}-c^{i_{0}j_{0}}_{k_{0}+1})&\text{if }k_{0}\ge 2
\end{cases}.
\end{multline*}
\end{lem}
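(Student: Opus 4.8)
The plan is to compute both pairings directly from the explicit values of the symmetric form on the basis $\Pi$, establishing the second (linear) identity first and then deducing the first (quadratic) one from it. Throughout I call a basis vector $v^{ij}_k$ a \emph{head} when $k=1$, and I adopt the boundary convention $c^{ij}_{t_{ij}+1}=0$ so that the tail of each chain is treated uniformly. I read the third clause in the definition of the form as saying that two distinct heads $v^{ij}_1$ and $v^{lm}_1$ pair to $-1$ exactly when their first indices differ (so the heads, partitioned by their first index $i=0,1,\ldots,r$, form a complete multipartite graph), all other cross-pairings being $0$.

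First I would prove the formula for $(\alpha,v^{i_0j_0}_{k_0})$. By bilinearity, $(\alpha,v^{i_0j_0}_{k_0})=\sum_{(i,j,k)\in I}c^{ij}_k(v^{ij}_k,v^{i_0j_0}_{k_0})$, so it suffices to list the basis vectors pairing nontrivially with $v^{i_0j_0}_{k_0}$. From the definition these are: the vector itself, contributing $2c^{i_0j_0}_{k_0}$; its chain-neighbours $v^{i_0j_0}_{k_0\pm1}$, each contributing $-1$; and, only when $k_0=1$, every head whose first index differs from $i_0$, again contributing $-1$ each. For $k_0\ge2$ only the chain neighbours survive and the three terms collect to $(c^{i_0j_0}_{k_0}-c^{i_0j_0}_{k_0-1})+(c^{i_0j_0}_{k_0}-c^{i_0j_0}_{k_0+1})$. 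For $k_0=1$ the diagonal and the successor $v^{i_0j_0}_2$ combine into $c^{i_0j_0}_1+(c^{i_0j_0}_1-c^{i_0j_0}_2)$, while the head--head edges contribute minus the sum of all heads lying in parts with first index $\neq i_0$; rewriting that as the total head-sum minus the heads sharing the index $i_0$ yields the stated first line of the case distinction.

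Next I would obtain $(\alpha,\alpha)$ from the identity $(\alpha,\alpha)=\sum_{(i,j,k)\in I}c^{ij}_k(\alpha,v^{ij}_k)$, substituting the linear formula just proved; pairing each $c_s$ against $(\alpha,v_s)$ counts every edge exactly once, so no double counting occurs. The contributions split into two packages. The chain terms, summed over each $(i,j)$, telescope into $\sum_{i,j}\sum_{k=1}^{t_{ij}}(c^{ij}_k-c^{ij}_{k+1})^2$ under the convention $c^{ij}_{t_{ij}+1}=0$. A short check shows this expression carries coefficient $2$ on every $(c^{ij}_k)^2$ with $k\ge2$ but only coefficient $1$ on each head square $(c^{ij}_1)^2$; the missing head squares are supplied precisely by the terms $\sum_{i=1}^r\sum_j(c^{ij}_1)^2$ and $\sum_{i=1}^{l_0}(c^{0i}_1)^2$. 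The remaining, head--head package is a quadratic form whose graph is complete multipartite as above.

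The main work, and the only genuinely delicate step, is to put this complete-multipartite form into the asserted shape. Writing $S=\sum_{i=1}^r\sum_{j=1}^{l_i}c^{ij}_1$ and $S_i=\sum_{j=1}^{l_i}c^{ij}_1$, and using $l_0=p$, the cross-edge sum over parts equals $-2\sum_{1\le i<l\le r}S_iS_l-2S\sum_{i=1}^{l_0}c^{0i}_1$. Completing the square via the identity $-(l_0+1)S^2+\sum_{i=1}^rS_i^2+\sum_{i=1}^{l_0}(S-c^{0i}_1)^2=-2\sum_{i<l}S_iS_l-2S\sum_ic^{0i}_1+\sum_i(c^{0i}_1)^2$ reorganizes it into exactly the leading summands of the asserted expression, the leftover $\sum_i(c^{0i}_1)^2$ merging with the chain package as noted above. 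Assembling the two packages gives the claimed identity. I expect the bookkeeping of this reorganization, in particular the special role of the $i=0$ part of size $l_0=p$ and the boundary convention on chain tails, to be where all the care is needed; everything else is a mechanical expansion of a bilinear form with prescribed entries.
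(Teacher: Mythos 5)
Your proposal is correct and takes essentially the same route as the paper: the paper's own proof is just the words ``Direct computation,'' and your argument --- establishing the linear formula $(\alpha,v^{i_0j_0}_{k_0})$ from the explicit entries of the form, then obtaining $(\alpha,\alpha)=\sum_{s}c_s(\alpha,v_s)$ with the chain terms telescoping into $\sum_k(c^{ij}_k-c^{ij}_{k+1})^2$ plus head squares and the complete-multipartite head part reorganized by your completing-the-square identity --- is exactly that computation carried out in full. Your two interpretive choices (reading the third clause of the form as ``the first indices differ,'' and reading the fourth term of $(\alpha,\alpha)$ as the square $\sum_{i=1}^{l_0}\bigl(\sum_{j=1}^{r}\sum_{k=1}^{l_j}c^{jk}_1-c^{0i}_1\bigr)^2$) are the intended ones; the paper's printed statement contains typos at both spots, and your verified identities confirm the corrected version.
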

\begin{proof}
Direct computation.
\end{proof}

We consider a subset of $Q^{+}$,
\[
V=\{nv^{i'j'}_{1}+\sum_{j=1}^{l_{0}}\sum_{k=1}^{n}m^{j}_{k}v^{0j}_{k}\mid \begin{subarray}{c}1\le i'\le r,1\le j'\le l_{i'}\\
 n,m^{j}_{k}\in\Z_{>0}\text { such that }n\ge m^{j}_{1}>m^{j}_{2}>\cdots\end{subarray}\}
\]
\begin{prop}
If $\alpha_{P}\notin W(V)$, the set of all Weyl group orbits of elements
 of $V$, then the Weyl group orbit of $\alpha_{P}$, $W(\alpha_{P})$, is
 contained in $Q^{+}$.
\end{prop}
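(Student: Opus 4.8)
The plan is to prove the contrapositive: if $W(\alpha_P)\not\subset Q^+$ then $\alpha_P\in W(V)$. We may assume $\alpha_P\neq 0$, since otherwise $W(\alpha_P)=\{0\}\subset Q^+$ and there is nothing to prove. As each $w\in W$ is an invertible linear map on $\mathfrak{h}$, every element of $W(\alpha_P)$ is nonzero, so the set $W(\alpha_P)\cap Q^+$ (nonempty, as it contains $\alpha_P$) consists of elements of height $\ge 1$. First I would choose in this set an element $\beta=\sum_{s\in I}c_s v_s$, with $c_s\in\Z_{\ge 0}$, of minimal height $\mathrm{ht}(\beta)$.

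The next step is the anti-dominant dichotomy. For a simple root $v_s$ one has $\mathrm{ht}(r_s(\beta))=\mathrm{ht}(\beta)-(\beta,v_s)$, and $r_s(\beta)$ differs from $\beta$ only in the coefficient of $v_s$, which becomes $c_s-(\beta,v_s)=-c_s+\sum_{t\,:\,(v_t,v_s)=-1}c_t$. If $(\beta,v_s)\le 0$ for every simple root, then $\beta$ lies in the anti-dominant fundamental domain and the standard fact that $w\gamma-\gamma\in Q^+$ for every $w\in W$ and every anti-dominant $\gamma$ (see \cite{Kac}) gives $W(\alpha_P)=W(\beta)\subset Q^+$, against our hypothesis. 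Hence $(\beta,v_s)>0$ for some $s$; for each such $s$ we have $\mathrm{ht}(r_s(\beta))<\mathrm{ht}(\beta)$, so minimality of $\beta$ forces $r_s(\beta)\notin Q^+$, i.e. its $v_s$-coefficient $-c_s+\sum_{t\,:\,(v_t,v_s)=-1}c_t$ is negative. This is exactly the peak inequality $\sum_{t\,:\,(v_t,v_s)=-1}c_t<c_s$, the sum running over the neighbours of $s$ in the Dynkin diagram of $A$.

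The main work, and the step I expect to be the real obstacle, is to deduce from these peak inequalities that $\beta\in V$. Here I would use the explicit shape of the diagram read off from the bilinear form: for fixed $(i,j)$ the nodes $v^{ij}_1,\dots,v^{ij}_{t_{ij}}$ form a single chain, the head nodes $v^{ij}_1$ with distinct first index $i$ are pairwise joined, and the $i=0$ chains correspond to the finite singular points $c_1,\dots,c_p$. Combining the peak inequalities at the dominant directions with the non-negativity of the $c_s$ and the minimality of $\mathrm{ht}(\beta)$, I would show successively that a strictly dominant direction can occur only at one head node $v^{i'j'}_1$ with $i'\ge 1$, that no irregular chain node $v^{ij}_k$ with $i\ge 1$ and $k\ge 2$ lies in the support of $\beta$, and that along each finite chain $v^{0j}_1,v^{0j}_2,\dots$ the coefficients form a strictly decreasing sequence of positive integers, each bounded above by the coefficient $n$ of $v^{i'j'}_1$. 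These are precisely the conditions defining $V$, so $\beta\in V$. The delicate point is coordinating the chain-convexity forced by the interior peak inequalities with the bound coming from the head node, while simultaneously excluding competing peaks on other branches; the height-minimal choice of $\beta$ is what makes this configuration rigid. Finally, since $\beta\in W(\alpha_P)$ we obtain $\alpha_P\in W(\beta)\subset W(V)$, contradicting the hypothesis, and therefore $W(\alpha_P)\subset Q^+$, as claimed.
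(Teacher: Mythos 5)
Your first three steps are fine: the choice of a height-minimal representative $\beta$ of $W(\alpha_P)\cap Q^+$, the anti-dominance dichotomy via the standard fact that $w\gamma-\gamma\in Q^+$ for anti-dominant $\gamma$, and the peak inequalities $\sum_{t\sim s}c_t<c_s$ at every node $s$ with $(\beta,v_s)>0$. Indeed this is precisely the technique the paper itself uses, but for the \emph{next} statement, Corollary \ref{imaginary}. The gap is in the step you yourself call the real obstacle: the peak inequalities, non-negativity of the $c_s$, and height-minimality do \emph{not} imply the structural claims you list, and hence do not imply $\beta\in V$. Write $a=v^{11}_1$, $b=v^{11}_2$, $u_k=v^{01}_k$. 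Then $\beta=2a+b$ is height-minimal in its orbit and not anti-dominant (its only dominant node is $a$, where the peak inequality $1<2$ holds and $r_a(\beta)=b-a\notin Q^+$), yet the irregular chain node $b=v^{11}_2$ lies in its support, contrary to your second claim. Likewise $\beta=u_1+2u_2$ is height-minimal and not anti-dominant, but it is supported on a finite chain with \emph{increasing} coefficients and contains no irregular head node at all, contrary to your first and third claims; and $\beta=v^{01}_1+v^{02}_1$ is height-minimal with dominant directions at two group-$0$ head nodes. Even where the needed conclusion is true, it does not take the form $\beta\in V$: for $\beta=v^{01}_1+v^{02}_1$ one has $\beta\notin V$, but $r^{11}_1(\beta)=2v^{11}_1+v^{01}_1+v^{02}_1\in V$, so the minimal element must sometimes be reflected \emph{upward} into $V$, a configuration your scheme does not produce. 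Worse, for $\beta=2a+b$ the orbit appears to miss $V$ entirely: inside the $A_3$ subsystem spanned by $b,a,u_1$, the vector $2a+u_1$ is conjugate to $-(2a+b)$ but not to $2a+b$ (compare the coordinate multisets $\{2,0,-1,-1\}$ and $\{1,1,0,-2\}$), and the same-norm candidates in $V$, such as $2a+u_1$ and $2a+2u_1+u_2$, all sit in that other orbit. So height-minimality plus the peak inequalities characterize neither $V$ nor $W(V)$.

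What your argument never uses, and what the proof actually requires, is that $\alpha_P$ and every element of its orbit is a vector of multiplicities of an actual operator. That is the paper's route, which is entirely different from yours: by Theorem \ref{reflection middle convolution} together with Remark \ref{well-defined middle convolution}, as long as the orbit stays away from $V$, each reflection $r^{ij}_1$ applied to $\alpha_P$ is realized as $\alpha_Q$ for the operator $Q$ obtained from $P$ by a twisted Euler transform or an addition (and the $r^{ij}_k$ with $k\ge 2$ are mere permutations of the table of local datum), so every element of $W(\alpha_P)$ is by construction a table of non-negative multiplicities, hence in $Q^+$; the hypothesis $\alpha_P\notin W(V)$ is exactly what keeps the transforms well defined at each step of the iteration. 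Your purely lattice-theoretic route discards this realizability, and the examples above show it cannot be recovered from the combinatorics of $Q^+$ alone: to exclude the bad minimal elements such as $2v^{11}_1+v^{11}_2$ you would have to show they never occur in the orbit of a genuine $\alpha_P$ avoiding $W(V)$, and that is precisely the operator-theoretic content of the paper's argument, not a consequence of the peak inequalities.
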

\begin{proof}
If $\alpha_{P}\notin V$, a twisted Euler transform of $P(x,\partial)$
 corresponds to a simple reflection of $\alpha_{P}$ by Theorem
 \ref{reflection middle convolution}. Hence  we can find
 $Q(x,\partial)\in W[x,\xi]$ which corresponds to
 $\alpha_{Q}=r^{ij}_{1}(\alpha_{P})$ by taking the twisted Euler
 transform of $P(x,\partial)$.  Moreover if $\alpha_{Q}\notin V$, we can
 find $Q'(x,\partial)\in W[x,\xi]$ which corresponds to $\alpha_{Q'}=r^{i'j'}_{1}(\alpha_{Q})=r^{i'j'}_{1}r^{ij}_{1}(\alpha_{P})$ by the twisted Euler transform. Hence if $\alpha_{P}\notin W(V)$, we can iterate these. Also we can use same argument for other simple reflections. Then we have the proposition.
\end{proof}
\begin{cor}\label{imaginary}
If $\alpha_{P}\notin W(V)$, then $\alpha_{P}\in \Delta^{im}$.
\end{cor}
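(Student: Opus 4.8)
The plan is to read the corollary off from the preceding Proposition, which under the hypothesis $\alpha_P\notin W(V)$ guarantees that the \emph{entire} Weyl orbit $W(\alpha_P)$ stays inside the positive root lattice $Q^{+}$. The guiding principle is that such positivity of the whole orbit is incompatible with $\alpha_P$ being a real root: by definition the orbit of a real root meets $\Pi$, hence also meets $-\Pi$, which lies outside $Q^{+}$. Combined with the dichotomy $\Delta=\Delta^{re}\cup\Delta^{im}$, this is designed to force $\alpha_P\in\Delta^{im}$. Conceptually this matches the narrative of the section: the set $V$ (and its $W$-orbit) consists exactly of the local data reducible to the trivial pattern $(\partial-\alpha x-\beta)^{n}$ by twisted Euler transforms, i.e. the ``real root'' cases, so $\alpha_P\notin W(V)$ is precisely the non-reducible regime.

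To make this precise I would argue by contradiction. Suppose $\alpha_P\in\Delta^{re}$. Then by the definition of a real root there exist $w\in W$ and a simple root $v^{ij}_{k}\in\Pi$ with $w(\alpha_P)=v^{ij}_{k}$. Applying the simple reflection $r^{ij}_{k}$ and using $(v^{ij}_{k},v^{ij}_{k})=2$ together with the formula $r^{ij}_{k}(H)=H-(H,v^{ij}_{k})v^{ij}_{k}$, one gets $r^{ij}_{k}(v^{ij}_{k})=v^{ij}_{k}-(v^{ij}_{k},v^{ij}_{k})v^{ij}_{k}=-v^{ij}_{k}$, so that $-v^{ij}_{k}=r^{ij}_{k}w(\alpha_P)\in W(\alpha_P)$. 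Since the coefficient of $-v^{ij}_{k}$ on the basis vector $v^{ij}_{k}$ equals $-1<0$, we have $-v^{ij}_{k}\notin Q^{+}$, contradicting $W(\alpha_P)\subset Q^{+}$ supplied by the Proposition. Hence $\alpha_P$ cannot be a real root.

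The step I expect to need the most care is to guarantee that $\alpha_P$ is genuinely a root, i.e. $\alpha_P\in\Delta$, so that the dichotomy applies and the statement $\alpha_P\in\Delta^{im}$ is literally meaningful; the argument above only rules out $\alpha_P\in\Delta^{re}$. To close this gap I would invoke the standard Kac--Moody characterization of imaginary roots: taking an element $\beta\in W(\alpha_P)\subset Q^{+}$ of minimal height and noting that $r^{ij}_{k}(\beta)\in W(\alpha_P)\subset Q^{+}$ cannot have smaller height forces $(\beta,v^{ij}_{k})\le 0$ for every simple root, so $\beta$ lies in the fundamental domain and is therefore a positive imaginary root (see \cite{Kac}); since $\Delta^{im}_{+}$ is $W$-invariant, $\alpha_P=w^{-1}\beta\in\Delta^{im}$. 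The only residual checks are that $\alpha_P\neq 0$, which is clear as each $\tilde{n}^{ij}_{k}$ is a nonempty sum of positive multiplicities, and that $\supp\alpha_P$ is connected, which follows from the construction of the Dynkin diagram attached to the table of local datum. As a consistency check one may also combine Theorem \ref{reflection middle convolution}(1), giving $(\alpha_P,\alpha_P)=\mathrm{idx}\,P$, with the symmetric-matrix identity $\Delta^{im}=\{\alpha\in\Delta\mid(\alpha,\alpha)\le 0\}$ to read the conclusion as $\mathrm{idx}\,P\le 0$.
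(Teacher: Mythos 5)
Your overall architecture is the same as the paper's second half: take an element $\beta$ of minimal height in $W(\alpha_P)\subset Q^{+}$, deduce $(\beta,v^{ij}_{k})\le 0$ for every simple root from minimality plus the preceding Proposition, and then invoke Kac's description of the positive imaginary roots as the Weyl saturation of the fundamental set (\cite{Kac}, Theorem 5.4). Your first step, ruling out $\alpha_P\in\Delta^{re}$ because the orbit of a real root contains the negative of a simple root, is a clean shortcut compared with the paper's index argument. The gap is in the connectedness check. Kac's theorem requires that the element you place in the fundamental set, namely $\beta$, have \emph{connected support}; you instead verify that $\supp\alpha_P$ is connected. Connectedness of support is not a Weyl-orbit invariant (a reflection can create or destroy it), so connectedness of $\supp\alpha_P$ gives no information about $\supp\beta$; and without it the inference ``$(\beta,v)\le 0$ for all simple $v$ implies $\beta\in\Delta^{im}$'' is simply false: if the diagram contains two mutually orthogonal affine subdiagrams with null roots $\delta_1,\delta_2$, then $\beta=\delta_1+\delta_2$ lies in $Q^{+}$, pairs nonpositively with every simple root, yet is not a root at all, since every root has connected support. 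So as written your argument does not exclude the possibility that $\beta$, and hence $\alpha_P$, fails to be a root, which is precisely the point you yourself flagged as needing the most care.

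This is exactly where the paper uses the hypothesis $\alpha_P\notin W(V)$ a second time (you use it only once, through the preceding Proposition). Because $\alpha_P\notin W(V)$, every element produced along the chain of simple reflections from $\alpha_P$ down to the minimal element is realized as $\alpha_{Q}$ for an honest differential operator $Q\in W[x,\xi]$, by Theorem \ref{reflection middle convolution}; in particular $\beta=\alpha_{Q_\beta}$ for some $Q_\beta$, and for elements arising from a table of local datum the support is connected by construction. That is how the paper obtains connectedness of $\supp\beta$ -- from the operator realization of $\beta$, not from any property of $\alpha_P$. Your proof can be repaired either by inserting this realization argument, or by a diagram-specific argument that in these diagrams (legs attached to a complete multipartite core) any $\beta\in Q^{+}\setminus\{0\}$ with $(\beta,v^{ij}_{k})\le 0$ for all $(i,j,k)$ automatically has connected support, because each orthogonal component of a disconnected support would have to sit inside a single finite-type leg, contradicting positive definiteness of the form there; but some such argument must be supplied, and your appeal to $\supp\alpha_P$ does not substitute for it.
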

\begin{proof}
First we assume that $(\alpha_{P},\alpha_{P})>0$. Let $\beta$ be an
 element of minimal height among $W(\alpha_{P})\cap Q^{+}$. Since the Weyl group action does not change inner product, we have $(\beta,\beta)>0$. Hence we have $(\beta,v^{ij}_{k})>0$ for some $(i,j,j)\in I$. If $\beta\neq v^{ij}_{k}$, then $r^{ij}_{k}(\beta)\in Q^{+}$ and $\mathrm{ht}(r^{ij}_{k}(\beta))<\mathrm{ht}(\beta)$, a contradiction with the choice of $\beta$. Hence $\beta=v^{ij}_{k}$. Since $\alpha_{P}\notin W(V)$, we can find $Q_{\beta}\in W[x,\xi]$ such that $\alpha_{Q_{\beta}}=\beta$. This implies that if $\beta$ is the simple root, then $\beta=v^{ij}_{1}$ for some $i$ and $j$.  However $v^{ij}_{1}\in V$. This contradicts our assumption. Hence if $\alpha_{P}\notin W(V)$, we have $(\alpha_{P},\alpha_{P})\le 0$.

Now we assume that $(\alpha_{P},\alpha_{P})\le 0$. As above, we choose
 an element $\beta\in W(\alpha_{P})$ of minimal height. Then
 $(\beta,v^{ij}_{k})\le 0$ for all $(i,j,k)\in I$. Since $\beta$
 corresponds to some $Q_{\beta}\in W[x,\xi]$, this implies that $\supp\beta$ is connected. Hence \[
\beta\in K=\{\alpha\in Q^{+}\backslash \{0\}\mid \substack{(\alpha,v^{ij}_{k})\le 0\text{ for all $(i,j,k)\in I$}\\\text{ and $\mathrm{supp}\alpha$ is connected}}\}.
\]
This implies that $\alpha_{P}\in W(K)=\Delta^{im}\cap Q^{+}$.
\end{proof}
\begin{thm}\label{rigid reduction}
Let  us take $P(x,\partial)\in W[x,\xi]$ as in Definition \ref{local
 datum}. If $\mathrm{idx}\,P>0$, then $P(x,\partial)$ can be reduced to 
\[
(\partial-\alpha x-\beta)^{n}
\]
for some $\alpha,\beta\in \C$ and $n\in \Z_{>0}$ by finite iterations of
 twisted Euler transforms and additions at regular singular points. 
\end{thm}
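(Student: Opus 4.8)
The plan is to transport the entire problem to the Kac--Moody root system via the element $\alpha_P\in Q^+$ and to run the reduction at the level of reflections, using the dictionary established in Theorem \ref{reflection middle convolution}. By part (1) of that theorem, $\mathrm{idx}\,P=(\alpha_P,\alpha_P)$, so the hypothesis $\mathrm{idx}\,P>0$ gives $(\alpha_P,\alpha_P)>0$. Since the Cartan matrix $A$ is symmetric, the imaginary roots are exactly those $\alpha$ with $(\alpha,\alpha)\le 0$, whence $\alpha_P\notin\Delta^{im}$. The contrapositive of Corollary \ref{imaginary} then yields $\alpha_P\in W(V)$, i.e.\ there is a Weyl-group element $w$ with $w(\alpha_P)\in V$.

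Next I would realize $w$ explicitly as a finite composition of admissible operations. Following the argument in the proof of Corollary \ref{imaginary}, at each stage I choose a height-decreasing simple reflection: starting from the current datum $\alpha$, as long as $\alpha\notin V$ and $(\alpha,\alpha)>0$ there is some $v^{ij}_k$ with $(\alpha,v^{ij}_k)>0$, and $r^{ij}_k(\alpha)\in Q^+$ has strictly smaller height. The reflections with $k=1$ are realized on the operator by twisted Euler transforms (part (2) of Theorem \ref{reflection middle convolution}) and by additions at regular singular points (part (3)); the reflections with $k\ge 2$ merely permute the listing of exponents and multiplicities (part (4)) and leave $P$ unchanged, so they may be applied freely to keep the multiplicities in decreasing order. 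Because the height is a strictly decreasing positive integer, this process terminates after finitely many steps at an operator $Q$ with $\alpha_Q\in V$.

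An element of $V$ realized as $\alpha_Q$ for a genuine operator $Q$ carries precisely the table of local datum of Corollary \ref{reducible scheme}: a single block $[(\nu);(n)]$ at the rank-$2$ irregular point together with exponents at the finite points satisfying $n=\sum_{i,j}m^i_j$. Hence Corollary \ref{reducible scheme} applies, and one further twisted Euler transform $E(\tfrac{\alpha}{2}x^2;\beta,\nu)$ sends $Q$ to $(\partial-\alpha x-\beta)^n$, completing the reduction.

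The main obstacle is the bookkeeping in the second step, namely checking that every reflection used is genuinely realized by a well-defined operation on $W[x,\xi]$: the intermediate operators must have positive order and the intermediate root vectors must stay in $Q^+$. The order condition is controlled by Remark \ref{well-defined middle convolution}, where the order of $E(\cdots)P$ equals $(n-n^{ij}_k)+(N_i-n^i_j)$ and is positive away from the terminal configuration; the positivity of coefficients is exactly the content of the proposition preceding Corollary \ref{imaginary}, which guarantees that the relevant orbit points are reached by legitimate transforms. One also needs to confirm that the terminal minimal-height datum is the reducible configuration in $V$ rather than a spurious simple root, and this is settled by the observation used in the proof of Corollary \ref{imaginary} that every simple root arising as some $\alpha_Q$ is of the form $v^{ij}_1$ and therefore already lies in $V$.
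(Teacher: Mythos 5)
Your proposal is correct and takes essentially the same route as the paper's own proof: translate $\mathrm{idx}\,P>0$ into $(\alpha_P,\alpha_P)>0$, invoke Corollary \ref{imaginary} to conclude $\alpha_P\in W(V)$, realize the resulting chain of simple reflections by twisted Euler transforms and additions via Theorem \ref{reflection middle convolution}, and finish with Corollary \ref{reducible scheme}. The height-decreasing induction and the well-definedness bookkeeping you supply are precisely the details the paper leaves implicit, so your argument does not diverge from its proof.
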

\begin{proof}
By Corollary \ref{imaginary}, if
 $\mathrm{idx}\,P=(\alpha_{P},\alpha_{P})>0$, then $\alpha_{P}\in
 W(V)$. Hence finite iterations of simple reflections $\alpha_{P}$
 reduces to an element of $V$. This implies $P$ reduces to a $Q\in
 W[x,\xi]$ with a table of local datum
\begin{center}
\begin{tabular}{|c|c||c|c|}
\hline
\multicolumn{2}{|c||}{}&\multicolumn{2}{|c|}{$\alpha$}\\
\hline
$c_{1}$&$[\mu^{1};m^{1}]$&$\beta$&$[(\nu);(n)]$\\
$c_{2}$&$[\mu^{2};m^{2}]$&&\\
$\vdots$&$\vdots$&&\\
 $c_{p}$&$[\mu^{p};m^{p}]$&&\\
\hline
\end{tabular},
\end{center}
by finite iterations of twisted Euler transforms  and additions at regular
 singular points. Proposition \ref{reducible scheme} says that 
\[
 E(\frac{\alpha}{2}x^{2};\beta,\nu)Q\sim (\partial-\alpha x-\beta)^{n}.
\]
Hence we have the theorem.
\end{proof}
\section{Confluence}
In this section, we show that differential operator $P(x,\partial)$ of
$\mathrm{idx}P>0$ can be obtained by the limit transition from a
Fuchsian differential operator of the same rigidity index.
\subsection{Fuchsian differential equations}
\begin{df}
Let us take  $P(x,\xi)\in W[x,\xi]$ as in Definition \ref{local datum}. If $P(x,\partial)$ has the following table of local datum,

\begin{center}
\begin{tabular}{|c|c||c|c|}
\hline
\multicolumn{2}{|c||}{}&\multicolumn{2}{|c|}{$0$}\\
\hline
$c_{1}$&$[\mu^{1};m^{1}]$&$0$&$[(\nu_{1},\ldots,\nu_{t});(n_{1},\ldots,n_{t})]$\\
$c_{2}$&$[\mu^{2};m^{2}]$&&\\
$\vdots$&$\vdots$&&\\
 $c_{p}$&$[\mu^{p};m^{p}]$&&\\
\hline
\end{tabular},
\end{center}
then we say that $P(x,\partial)$ is Fuchsian.
\end{df}

\begin{prop}
If $P(x,\partial)$ is Fuchsian with nontrivial table of local datum, then $E(0,f(\xi))P$ and $\mathrm{RAd}((x-c)^{g(\xi)}) P$ are Fuchsian for any $c\in\C$ and $f(\xi),g(\xi)\in \C(\xi)$.

\end{prop}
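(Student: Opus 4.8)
The plan is to show that each operation preserves the defining shape of a Fuchsian table of local datum, namely: a single column $\alpha=0$ carrying under it a single row with $\beta=0$, together with only regular singular points in $\C$. Equivalently, $x=\infty$ is at worst a regular singular point with no exponential and no slope-$2$ structure, and every finite singularity is regular. I would track the table of local datum through each operation, using Theorem~\ref{addition and middle convolution}, Propositions~\ref{addition and exponents} and \ref{addition and exponents at infinity}, and the Laplace-transform results Proposition~\ref{laplace inverse transform of regular point} and Corollaries~\ref{laplace transform of rank1}--\ref{laplace inverse transform of rank2}.

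The addition is straightforward. If $c=c_{i}$ for one of the existing regular points, then Theorem~\ref{addition and middle convolution}(1) applies verbatim: the operation shifts the exponents at $c_{i}$ by $+g(\xi)$ and all exponents at $\infty$ by $-g(\xi)$, while leaving every $\alpha_{i}$ and every $\beta^{i}_{j}$ unchanged; in particular the unique column $\alpha=0$ and the unique row $\beta=0$ survive. If $c$ is not among the $c_{i}$, then $\mathrm{Ad}((x-c)^{g(\xi)})$ is holomorphic and nonvanishing away from $c$, so it creates at most one new regular singular point at $c$ and, by Proposition~\ref{addition and exponents at infinity}(1), only shifts the exponents at $\infty$ by $-g(\xi)$ without touching the trivial exponential structure there. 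In both cases the result still has a single $\alpha=0$, a single $\beta=0$, and only regular finite singularities, hence is Fuchsian.

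For the Euler transform I would use the factorization $E(0,f(\xi))=\mathcal{L}\circ\mathrm{RAd}(x^{-f(\xi)})\circ\mathcal{L}^{-1}\circ\mathrm{R}$ and follow the table through the four steps. Since $P$ is Fuchsian, $\mathcal{L}^{-1}\mathrm{R}P$ acquires a regular singular point at $x=0$ (the image of the regular point $\infty$, by Proposition~\ref{laplace inverse transform of regular point}) and, at $\infty$, only a rank-$\le 1$ structure with $\alpha=0$ whose $\beta$-values are $c_{1},\dots,c_{p}$ (the images of the finite regular points, by Corollary~\ref{laplace transform of rank1} applied to $\mathcal{L}^{-1}\mathrm{R}P$); crucially no slope-$2$ column can appear, precisely because $\infty$ was a regular point of $P$. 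The middle operation $\mathrm{RAd}(x^{-f(\xi)})$ acts at the point $x=0$, so by Propositions~\ref{addition and exponents} and \ref{addition and exponents at infinity} it merely shifts exponents (at $0$ by $-f(\xi)$, at $\infty$ by $+f(\xi)$) and fixes all singular positions together with the exponential structure. Applying $\mathcal{L}$ then returns the regular point at $0$ to a plain regular structure at $\infty$ with $\alpha=0$ and $\beta=0$ (Proposition~\ref{oouue}), and returns the $\beta$-values $c_{i}$ at $\infty$ to finite regular points at $c_{i}$ (Corollary~\ref{laplace transform of rank1}). Thus $E(0,f(\xi))P$ again has one column $\alpha=0$, one row $\beta=0$, regular finite points at the $c_{i}$, and a regular point at $\infty$: it is Fuchsian.

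The step I expect to be the main obstacle is the bookkeeping of the auxiliary zero-exponent blocks $[0]_{N-m}$ introduced by Proposition~\ref{oouue} and Corollary~\ref{laplace transform of rank1}, together with their interaction with the reduced representatives $\mathrm{R}$ and $\mathrm{RAd}$, which can lower the order. One must verify that these operations only redistribute the trivial $[0]$-block and never manufacture a slope-$2$ column or a nonzero $\beta$ at $\infty$. The conceptual point that makes everything go through is a stability observation: $\mathcal{L}^{-1}$ carries a regular point at $\infty$ to a regular point at $0$ and carries finite regular points to rank-$1$ data with $\alpha=0$, so that the class ``single $\alpha=0$, single $\beta=0$'' is exactly the fixed class of the conjugation $\mathcal{L}\circ(\text{addition at }0)\circ\mathcal{L}^{-1}$ underlying $E(0,f(\xi))$.
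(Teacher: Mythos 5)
Your proof is correct and is essentially the paper's own argument: the paper proves this proposition with the single sentence ``This is a corollary of Theorem \ref{addition and middle convolution},'' and your table-tracking is exactly the content of that theorem, unpacked down to the Laplace-transform lemmas (Proposition \ref{oouue}, Proposition \ref{laplace inverse transform of regular point}, Corollaries \ref{laplace transform of rank1}--\ref{laplace inverse transform of rank2}) from which it is proved. If anything, your version is slightly more complete, since Theorem \ref{addition and middle convolution} literally covers only additions at the existing points $c_{i}$ and Euler parameters of the form $\nu^{ij}_{k}-1$, whereas you also treat an addition at a new point $c\notin\{c_{1},\ldots,c_{p}\}$ and a general parameter $f(\xi)$, including the bookkeeping of the $[0]$-blocks under $\mathrm{R}$ that this requires.
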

\begin{proof}
This is a collorary of Theorem \ref{addition and middle convolution}.
\end{proof}
\subsection{Versal additions}
We define the operator called versal additions. These operators are
introduced by Oshima in \cite{O}.

For $a_{1},\ldots,a_{n}\in \C$, we define a function
\[
h_{n}(a_{1},\ldots,c_{n};x)=-\int_{0}^{x}\frac{t^{n-1}\,dt}{\prod_{1\le i\le n}(1-a_{i}t)}.
\]
Then it is not hard to see that 
\[
e^{\lambda_{n}h_{n}(a_{1},\ldots,a_{n};x)}=\prod_{k=1}^{n}(1-a_{k}x)^{\frac{\lambda_{n}}{a_{k}\prod_{\substack{1\le i\le n\\i\neq k}}(a_{k}-a_{i})}}.
\]
\begin{df}[Versal addition]
We put
\begin{align*} 
\mathrm{AdV}(a_{1},\ldots,a_{n};\lambda_{1},\ldots,\lambda_{n})&=\prod_{k=1}^{n}\mathrm{Ad}\left((x-\frac{1}{a_{k}})^{\sum_{l=k}^{n}\frac{\lambda_{l}}{a_{k}\prod_{\substack{1\le i\le l\\i\neq k}}(a_{k}-a_{i})}}\right).\\
\end{align*}
\end{df}
\begin{prop}\label{limit transition}
For $P(x,\partial)\in W[x]$, we have
\begin{align*}
&\lim_{a_{1}\rightarrow 0}\mathrm{AdV}(a_{1};\lambda_{1})P(x,\partial)=\mathrm{Ade}(-\lambda_{1}x)P(x,\partial),\\
&\lim_{\substack{a_{1}\rightarrow 0\\a_{2}\rightarrow 0}}\mathrm{AdV}(a_{1},a_{2};\lambda_{1},\lambda_{2})P(x,\partial)=\mathrm{Ade}(\lambda_{1}x^{2}+\lambda_{2}x)P(x,\partial).
\end{align*}
\end{prop}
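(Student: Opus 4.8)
The plan is to collapse the entire versal addition into a single $\mathrm{Adei}$ and then pass the confluence limit through its rational argument. First I would observe that each factor of $\mathrm{AdV}$ is an addition at a singular point, so $\mathrm{Ad}((x-\frac{1}{a_k})^{\nu})=\mathrm{Adei}(\frac{\nu}{x-1/a_k})$, and that $\mathrm{Adei}(g)\circ\mathrm{Adei}(h)=\mathrm{Adei}(g+h)$ since both automorphisms fix $x$ and the $\xi_i$ and act on $\partial$ by subtracting a function of $x$. Hence
\[
\mathrm{AdV}(a_{1},\dots,a_{n};\lambda_{1},\dots,\lambda_{n})=\mathrm{Adei}(H),\qquad H=\sum_{k=1}^{n}\frac{1}{x-\frac{1}{a_k}}\sum_{l=k}^{n}\frac{\lambda_l}{a_k\prod_{1\le i\le l,\,i\ne k}(a_k-a_i)},
\]
so the whole operator is governed by the single rational function $H=H(x;a_1,\dots,a_n)$.

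The key step is to recognise $H$ in closed form. Interchanging the two summations and using $\frac{1}{(x-1/a_k)\,a_k}=\frac{1}{a_kx-1}$, the coefficient of $\lambda_l$ in $H$ is precisely the logarithmic $x$-derivative of the identity recorded just before the definition of the versal addition: applying $\frac{d}{dx}\log$ to $e^{\lambda_l h_l}=\prod_{k=1}^{l}(1-a_kx)^{\lambda_l/(a_k\prod_{i\ne k}(a_k-a_i))}$ and using $\frac{-a_k}{1-a_kx}=\frac{1}{x-1/a_k}$ gives
\[
\lambda_l\,h_l'(a_1,\dots,a_l;x)=\sum_{k=1}^{l}\frac{1}{x-\frac{1}{a_k}}\cdot\frac{\lambda_l}{a_k\prod_{1\le i\le l,\,i\ne k}(a_k-a_i)},
\]
whence $\mathrm{AdV}(a_{1},\dots,a_{n};\lambda_{1},\dots,\lambda_{n})=\mathrm{Adei}\Big(\sum_{l=1}^{n}\lambda_l\,h_l'(a_1,\dots,a_l;x)\Big)$. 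Now from $h_l=-\int_0^x t^{l-1}/\prod_i(1-a_it)\,dt$ I read off $h_l'(a_1,\dots,a_l;x)=-x^{l-1}/\prod_{i=1}^{l}(1-a_ix)$, a rational function whose only poles, at $x=1/a_i$, escape to infinity as the $a_i\to0$; hence $h_l'\to-x^{l-1}$ uniformly on compacta together with all its $x$-derivatives. Substituting these limiting values identifies the limiting $\mathrm{Adei}$-argument with $p'(x)$ for the polynomial exponent $p(x)$ on the right-hand side of each asserted identity, so the cases $n=1$ and $n=2$ yield the two displayed formulas by an elementary substitution.

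Finally I would justify that the limit may be computed inside the operator. Writing $P=\sum_i a_i(x)\partial^i$ with $a_i(x)\in\C[x]$, we have $\mathrm{Adei}(H)P=\sum_i a_i(x)(\partial-H)^i$, and since $[\partial,H]=H'$ every coefficient of this expansion is a polynomial in $H,H',H'',\dots$ with $\C[x]$-coefficients, hence a rational function of $x$ and of $a_1,\dots,a_n$. Because $H$ and all its $x$-derivatives converge to those of the limiting polynomial as the $a_i\to0$, these coefficients converge coefficientwise to the coefficients of $\mathrm{Adei}(p')P=\mathrm{Ade}(p)P$, which is the required limit transition. I expect the main obstacle to be exactly this bookkeeping: carrying out the partial-fraction rearrangement that collapses the double sum into $\sum_l\lambda_l h_l'$, and checking that the moving poles at $x=1/a_k$ cause no obstruction to the coefficientwise limit once the $a_i$ are sent to $0$.
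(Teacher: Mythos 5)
Your computation is correct and, at its core, is the same argument as the paper's: the paper simply computes the image of $\partial$ under $\mathrm{AdV}$ for $n=1,2$ by partial fractions and lets $a_{i}\to 0$, whereas you collapse the whole versal addition into a single $\mathrm{Adei}(H)$ and identify $H=\sum_{l}\lambda_{l}h_{l}'$ with $h_{l}'(a_{1},\ldots,a_{l};x)=-x^{l-1}/\prod_{i=1}^{l}(1-a_{i}x)$. Your version is more systematic (it works uniformly in $n$, not just $n\le 2$), and you also supply a justification the paper omits entirely, namely why the limit may be taken coefficientwise through $\mathrm{Adei}(H)P=\sum_{i}a_{i}(x)(\partial-H)^{i}$; both are genuine improvements in rigor, though the underlying mechanism (the poles at $x=1/a_{k}$ escaping to infinity) is identical.

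However, your last step --- ``the limiting $\mathrm{Adei}$-argument is $p'(x)$ for the polynomial exponent $p(x)$ on the right-hand side of each asserted identity'' --- is false for the second identity as stated, and you assert it without checking. Your computation (and the paper's) gives the limit $\mathrm{Adei}(-\lambda_{1}-\lambda_{2}x)$, and since $\mathrm{Ade}(p)=\mathrm{Adei}(p')$ this equals $\mathrm{Ade}(-\lambda_{1}x-\tfrac{\lambda_{2}}{2}x^{2})$, whereas the stated right-hand side $\mathrm{Ade}(\lambda_{1}x^{2}+\lambda_{2}x)$ equals $\mathrm{Adei}(2\lambda_{1}x+\lambda_{2})$; these disagree for generic $\lambda_{1},\lambda_{2}$. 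In fact the paper's own proof ends with exactly the formula you derived, $\mathrm{Ade}(-\tfrac{\lambda_{2}}{2}x^{2}-\lambda_{1}x)\partial$, so the proposition as printed is inconsistent with its own proof (the statement evidently contains a normalization error, visible also in how the result is invoked later with $\mathrm{AdV}(a_{1},a_{2};-2\alpha,-\beta)$). The correct conclusion of your argument is the corrected identity, not the printed one; a careful write-up should flag this discrepancy rather than paper over it with ``an elementary substitution.''
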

\begin{proof}
If we recall that
\[
 \mathrm{AdV}(a_{1};\lambda_{1})\colon \partial=
 \partial-\frac{\frac{\lambda_{1}}{a_{1}}}{(x-\frac{1}{a_{1}})}=\partial+\frac{\lambda_{1}}{1-a_{1}x},
\]
then we can see that
\[
 \lim_{a_{1}\rightarrow
 0}{AdV}(a_{1};\lambda_{1})\partial=\partial+\lambda_{1}=\mathrm{Ade}(-\lambda_{1}x)\partial.
\]
Also the equation
\begin{align*}
 \mathrm{AdV}(a_{1},a_{2};\lambda_{1},\lambda_{2})\partial&=\partial-\frac{\frac{\lambda_{1}}{a_{1}}+\frac{\lambda_{2}}{a_{1}(a_{1}-a_{2})}}{x-\frac{1}{a_{1}}}-\frac{\frac{\lambda_{2}}{a_{2}(a_{2}-a_{1})}}{x-\frac{1}{a_{2}}}\\&
=\partial+\frac{\lambda_{1}+\frac{\lambda_{2}}{(a_{1}-a_{2})}}{1-a_{1}x}+\frac{\frac{\lambda_{2}}{(a_{2}-a_{1})}}{1-a_{2}x}\\
&=\partial+\frac{\lambda_{1}}{1-a_{1}x}+\frac{\lambda_{2}x}{(1-a_{1}x)(1-a_{2}x)},
\end{align*}
 implies that 
\[
 \lim_{\substack{a_{1}\rightarrow 0\\a_{2}\rightarrow
 0}}\mathrm{AdV}(a_{1},a_{2};\lambda_{1},\lambda_{2})\partial=\partial+\lambda_{1}+\lambda_{2}x=\mathrm{Ade}(-\frac{\lambda_{2}}{2}x^{2}-\lambda_{1}x)\partial.
\]
\end{proof}
\begin{thm}
Take a $P(x,\partial)\in W[x,\xi]$ as in Definition \ref{local datum}.  If $\mathrm{idx}\,P>0$, then $P(x,\partial)$ can be obtained by  the limit transition of a Fuchsian $Q(x,\partial)\in W[x,\xi]$ of $\mathrm{idx}\,Q=\mathrm{idx}\,P.$
\end{thm}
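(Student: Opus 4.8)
The plan is to run the reduction of Theorem \ref{rigid reduction} in reverse while replacing every $e^{p(x)}$-twisting by a versal addition degenerating to it, so that the reconstructed operator stays Fuchsian along the deformation and collapses to $P$ in the limit.

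First I would invoke Theorem \ref{rigid reduction}: as $\mathrm{idx}\,P>0$, there is a finite chain of twisted Euler transforms and additions at regular singular points carrying $P$ to $(\partial-\alpha x-\beta)^{n}$. Each link of this chain is invertible, its inverse being again a twisted Euler transform (Proposition \ref{inversion of Euler transform} and the proposition immediately following it) or an addition $\mathrm{RAd}((x-c)^{-g})$ (Proposition \ref{addition and exponents}). Reading the chain backwards I obtain a word $W$ in these operations with
\[
P\sim W\bigl((\partial-\alpha x-\beta)^{n}\bigr).
\]
Since $\mathrm{Ade}(\tfrac{\alpha}{2}x^{2}+\beta x)$ sends $\partial$ to $\partial-(\alpha x+\beta)$, we have $(\partial-\alpha x-\beta)^{n}=\mathrm{Ade}(\tfrac{\alpha}{2}x^{2}+\beta x)\partial^{n}$, and $\partial^{n}$ is (trivially) Fuchsian. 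Using Remark \ref{Euler and twisted Euler} together with the identity $E(\alpha_i,f)=\mathrm{Ade}(\alpha_i x)E(0,f)\mathrm{Ade}(-\alpha_i x)$, I would expand every twisted Euler transform appearing in $W$ and in the leading twisting into a word built solely from pure Euler transforms $E(0,f)$, additions at finite points, and $e^{p(x)}$-twistings $\mathrm{Ade}(p(x))$ with $\deg p\le 2$; call the resulting word $\widetilde W$, so that $P\sim \widetilde W(\partial^{n})$.

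Next comes the confluence step. I would form $Q=Q(a_1,a_2)$ from $\widetilde W$ by substituting, for each factor $\mathrm{Ade}(\lambda_1 x^{2}+\lambda_2 x)$ the versal addition $\mathrm{AdV}(a_1,a_2;\lambda_1,\lambda_2)$ and for each factor $\mathrm{Ade}(-\lambda_1 x)$ the versal addition $\mathrm{AdV}(a_1;\lambda_1)$, leaving the pure Euler transforms and the finite additions untouched, and applying the result to the Fuchsian $\partial^{n}$. By the proposition opening Subsection 6.1, pure Euler transforms $E(0,f)$ and additions $\mathrm{RAd}((x-c)^{g})$ preserve Fuchsianity, and a versal addition is by definition a product of additions at the finite points $1/a_k$; hence $Q(a_1,a_2)$ is Fuchsian for generic small $a_1,a_2$. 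By Proposition \ref{limit transition} each versal factor tends to the corresponding $e^{p(x)}$-twisting as $a_1,a_2\to 0$, and since the composite operation depends continuously on the $a_k$, I would conclude $\lim_{a_1,a_2\to 0}Q(a_1,a_2)\sim \widetilde W(\partial^{n})\sim P$. It remains to match rigidity indices: the reduction preserves $\mathrm{idx}$ through its Weyl-group interpretation (Theorem \ref{reflection middle convolution}, part 1), so $\mathrm{idx}\,P=\mathrm{idx}\,(\partial-\alpha x-\beta)^{n}$, and I would compare this with the Fuchsian count for $Q$ using that the multiplicity data $\tilde n^{ij}_{k}$ are left unchanged by the versal deformation.

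The hard part will be this last identification of rigidity indices. The versal additions introduce two extra regular singular points at $1/a_1,1/a_2$ which merge into the rank-$2$ irregular point of $P$ as $a_k\to 0$, so I must verify that their contribution to $\mathrm{idx}\,Q$ is exactly the contribution of the confluent irregular datum to $\mathrm{idx}\,P$; this amounts to checking that the two sums of squares in the definition of the rigidity index agree across the limit once the spectral data are tracked through the deformation. A second, more technical point is to guarantee that $\lim Q=P$ really holds at the level of operators, that is, that no order drop or cancellation in the reduced representatives occurs as the $a_k$ tend to $0$.
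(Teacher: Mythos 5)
Your proposal follows essentially the same route as the paper's own proof: reverse the reduction of Theorem \ref{rigid reduction}, decompose each twisted Euler transform into pure Euler transforms, additions, and $e^{p(x)}$-twistings via Remark \ref{Euler and twisted Euler}, replace the twistings by versal additions, and conclude with Proposition \ref{limit transition} together with Weyl-group invariance of the rigidity index. The two technical points you flag at the end (matching rigidity indices across the confluence and ruling out order drops in the limit) are precisely the points the paper's own, rather terse, proof also leaves implicit, so no essential idea is missing relative to the paper.
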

\begin{proof}
By Theorem \ref{rigid reduction}, $P(x,\partial)$ is obtained by finite iterations of twisted Euler transforms and additions  from $(\partial-\alpha x-\beta)^{n}$. Here $\partial-\alpha x-\beta=\lim_{\substack{a_{1}\rightarrow 0\\a_{2}\rightarrow 0}}\mathrm{AdV}(a_{1},a_{2};-2\alpha,-\beta)\partial.$ As we see in Remark \ref{Euler and twisted Euler}, twisted Euler transforms are compositions of Euler transforms, $\mathrm{Ade}(\alpha x)$ and $\mathrm{Ade}(\beta x^{2}+\gamma x)$ for some $\alpha,\beta,\gamma\in\C$. Hence twisted Euler transforms can be obtained by the limit transitions of compositions of additions and Euler transforms by Proposition \ref{limit transition}. 

Therefore $P(x,\partial)$ can be seen as the limit of a Fuchsian
 $Q(x,\xi)$ which is obtained by Euler transforms and additions from
 $\mathrm{AdV}(a_{1},a_{2};-2\alpha,-\beta)\partial.$ 

Finally we notice
 that twisted Euler transforms do not change rigidity indices because
 the action of Weyl group does not change the inner product.
\end{proof}
\appendix
\section*{Appendix}
\section{Differential equations with regular singularity at $x=\infty$ and Euler transform}
We consider differential equations with regular singular point at $x=\infty$ and arbitrary singularities at any other points in $\C$. And then we give a necessary and sufficient condition to reduce the rank of differential equation by Euler transform.
\begin{thm}
Let us take $P(x,\partial)\in W[x]$ which has regular singular point at $x=\infty$ and semi-simple exponents
\[
\{[\mu_{1}]_{n_{1}},\ldots,[\mu_{l}]_{n_{l}}\},
\] 
where $\sum_{i=1}^{l}n_{i}=n=\mathrm{ord}\,P$, $\mu_{i}\notin\Z$ and $\mu_{i}-\mu_{j}\notin\Z$ if $i\neq j$. Then we have
\[
\mathrm{ord}\,E(0,\mu_{i}-1)P(x,\partial)<\mathrm{ord}\,P
\]
if and only if 
\[
\deg P-\mathrm{ord}\, P<n_{i}.
\]
\begin{proof}
Since $x=\infty$ is the regular singular point of $P(x,\partial)\in W[x]$, we can write
\[
\mathrm{R}P(x,\partial)=\sum_{i=0}^{N}x^{N-i}\partial^{\max\{n-i,0\}}P_{i}(\partial)
\]
for $P_{i}(x)\in\C[x]$ of $\deg P_{i}\le n$ for $i=0,\ldots, N$ and $P_{0}(x)=1$. Here $N=\deg P$ and $n=\mathrm{ord}\, P$. Hence we have
\[
\mathcal{L}^{-1}\mathrm{R}P=\sum_{i=0}^{N}\partial^{N-i}(-x)^{\max\{n-i\}}P_{i}(-x)
\]
and this has regular singular point at $x=0$ and no other singular points in $\C$. Also this has  semi-simple exponents,
\[
\{[0]_{N-n},[\mu_{1}-1]_{n_{1}},\ldots,[\mu_{l}-1]_{n_{l}}\}
\]
at $x=0$ by Proposition {laplace inverse transform of regular point}. And then we can see that $\mathrm{RAd}(x^{-\mu_{i}+1})\mathcal{L}^{-1}\mathrm{R}P$ has semi-simple exponents,
\[
\{[-\mu_{i}+1]_{N-n},[\mu_{1}-\mu_{i}]_{n_{1}},\ldots,[\mu_{i-1}-\mu_{i}]_{n_{i-1}},[0]_{n_{i}}[\mu_{i+1}-\mu_{i}]_{n_{i}},\ldots\}
\]
by Proposition \ref{addition and exponents}. Hence by Proposition \ref{reduce degree}, we have
\[
\deg \mathrm{RAd}(x^{-\mu_{i}+1})\mathcal{L}^{-1}\mathrm{R}P=n-n_{i}+(N-n).
\]
 This means that 
\begin{align*}
\mathrm{ord}\, \mathcal{L} \mathrm{RAd}(x^{-\mu_{i}+1})\mathcal{L}^{-1}\mathrm{R}P&=E(0,\mu_{i}-1)P\\
&=n-n_{i}+(N-n).
\end{align*}
Hence we have the theorem.

\end{proof}
\end{thm}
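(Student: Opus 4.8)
The plan is to compute $\mathrm{ord}\,E(0,\mu_i-1)P$ exactly and then read off the stated equivalence. Since by definition
\[
E(0,\mu_i-1)=\mathcal{L}\circ\mathrm{RAd}(x^{-(\mu_i-1)})\circ\mathcal{L}^{-1}\circ\mathrm{R},
\]
and the Fourier--Laplace transform $\mathcal{L}$ interchanges order and degree, the first step is to record the identity
\[
\mathrm{ord}\,E(0,\mu_i-1)P=\deg\,\mathrm{RAd}(x^{-\mu_i+1})\mathcal{L}^{-1}\mathrm{R}P .
\]
Thus the whole problem becomes a degree count for the operator produced by $\mathcal{L}^{-1}\mathrm{R}$ followed by a single addition at $x=0$.

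To carry this out I would first pass to the reduced representative and write $\mathrm{R}P$ in the standard form at the regular singular point $x=\infty$, so that $\mathcal{L}^{-1}\mathrm{R}P=\sum_{k=0}^{N}\partial^{N-k}(-x)^{\max\{n-k,0\}}P_k(-x)$ with $N=\deg P$ and $n=\mathrm{ord}\,P$; this is an operator of order $N$ whose only finite singularity is a regular singular point at $x=0$. By Proposition \ref{laplace inverse transform of regular point} its semi-simple exponents at $x=0$ are
\[
\{[0]_{N-n},[\mu_1-1]_{n_1},\ldots,[\mu_l-1]_{n_l}\},
\]
the block $[0]_{N-n}$ accounting for the gap between order and degree, and the hypotheses $\mu_j\notin\Z$ and $\mu_j-\mu_k\notin\Z$ ensuring that these blocks sit in distinct residue classes modulo $\Z$.

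Next I would apply the addition $\mathrm{Ad}(x^{-\mu_i+1})$, which by Proposition \ref{addition and exponents} translates every exponent at $x=0$ by $-\mu_i+1$, turning the list into
\[
\{[-\mu_i+1]_{N-n},[\mu_1-\mu_i]_{n_1},\ldots,[0]_{n_i},\ldots,[\mu_l-\mu_i]_{n_l}\}.
\]
The decisive feature is the block $[0]_{n_i}$: it records $n_i$ logarithm-free solutions with exponents $0,1,\ldots,n_i-1$, so Proposition \ref{reduce degree} forces the reduced representative $\mathrm{RAd}(x^{-\mu_i+1})\mathcal{L}^{-1}\mathrm{R}P$ to carry the factor $x^{n_i}$. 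Because every remaining exponent $-\mu_i+1$ and $\mu_j-\mu_i$ $(j\neq i)$ lies outside $\Z$, there is no further integer-exponent block and the factor is exactly $x^{n_i}$; this lowers the degree by precisely $n_i$ and gives $\deg\,\mathrm{RAd}(x^{-\mu_i+1})\mathcal{L}^{-1}\mathrm{R}P=(n-n_i)+(N-n)=N-n_i$. Hence $\mathrm{ord}\,E(0,\mu_i-1)P=\deg P-n_i$.

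Finally, comparing with $\mathrm{ord}\,P=n$, the inequality $\mathrm{ord}\,E(0,\mu_i-1)P<\mathrm{ord}\,P$ becomes $\deg P-n_i<n$, that is $\deg P-\mathrm{ord}\,P<n_i$, which is exactly the asserted criterion. I expect the only genuine obstacle to be the degree bookkeeping in the third step: one must check that the conjugation $\mathrm{Ad}(x^{-\mu_i+1})$ contributes the term $N-n$ to the degree, and that Proposition \ref{reduce degree} is invoked for the reduced representative, so that the drop coming from the $[0]_{n_i}$ block is exactly $n_i$ and not merely at least $n_i$; the non-integrality hypotheses on the $\mu_j$ are precisely what keep this accounting clean.
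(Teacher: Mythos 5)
Your proposal is correct and follows essentially the same route as the paper's own proof: it rewrites $E(0,\mu_i-1)P$ as $\mathcal{L}\,\mathrm{RAd}(x^{-\mu_i+1})\mathcal{L}^{-1}\mathrm{R}P$, uses the fact that $\mathcal{L}$ exchanges order and degree, computes the exponents at $x=0$ of $\mathcal{L}^{-1}\mathrm{R}P$ and of its shift by $-\mu_i+1$ via the same two propositions, and then extracts the factor $x^{n_i}$ from the $[0]_{n_i}$ block via the same divisibility proposition to get $\mathrm{ord}\,E(0,\mu_i-1)P=(n-n_i)+(N-n)$. Your added remarks on why the non-integrality hypotheses make the degree drop exactly $n_i$ merely spell out what the paper leaves implicit.
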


\end{document}